\newtheorem{lem}{Lemma}[section]
\newtheorem{thm}[lem]{Theorem}
\newtheorem{cor}[lem]{Corollary}
\numberwithin{equation}{section}
\numberwithin{figure}{section}
\renewcommand{\leq}{\leqslant}
\renewcommand{\geq}{\geqslant}
\newcommand{\bA}{\mathbf{A}}
\newcommand{\bc}{\mathbf{c}}
\newcommand{\be}{\mathbf{e}}
\newcommand{\bh}{\mathbf{h}}
\newcommand{\bp}{\mathbf{p}}
\newcommand{\bv}{\mathbf{v}}
\newcommand{\bw}{\mathbf{w}}
\newcommand{\bM}{\mathbf{M}}
\newcommand{\dimB}{\dim_{\rm B}}
\newcommand{\dimH}{\dim_{\rm H}}
\def\dist{{\rm dist}}
\def\supp{{\rm supp}}
\title{Attractors of directed graph IFSs that are not standard IFS attractors and their Hausdorff measure}     
\author{
G. C. Boore and K. J. Falconer \\
\footnotesize \emph{Mathematical Institute, University of St~Andrews,} \\
\footnotesize \emph{North Haugh, St~Andrews, Fife, KY16~9SS, Scotland.}
}
\begin{document}

\maketitle

\begin{abstract}
For directed graph iterated function systems (IFSs) defined on $\mathbb{R}$, we prove that a class of $2$-vertex directed graph IFSs have attractors that cannot be the attractors of standard ($1$-vertex directed graph) IFSs, with or without separation conditions. We also calculate their exact Hausdorff measure. Thus we are able to identify a new class of attractors for which the exact Hausdorff measure is known. 
\end{abstract}

\section{Introduction}\label{one}
The work of this paper was originally motivated by asking the question, ``Do we really get anything new with a directed graph IFS as opposed to a standard IFS?'' A standard IFS can always be represented as a $1$-vertex directed graph IFS so the question is really, ``Do we get anything new with a directed graph IFS with more than $1$ vertex as opposed to a $1$-vertex directed graph IFS?''. By restricting the systems under consideration to those defined on $\mathbb{R}$, we answer this question in the affirmative by proving that a class of $2$-vertex directed graph IFSs have attractors that cannot be the attractors of standard ($1$-vertex directed graph) IFSs, with or without separation conditions, overlapping or otherwise. We are also able to calculate the Hausdorff measure of these attractors and so we extend the class of attractors for which the exact Hausdorff measure is known.  

In what follows we will often write \emph{$k$-vertex IFS} as a shortening of $k$-vertex directed graph IFS.

We start, in Section \ref{three} by proving a general density result, Corollary \ref{2GcorG}, for directed graph IFSs defined on $\mathbb{R}^n$ for which the open set condition holds. In Section \ref{four}, Theorem \ref{2GthmN}, we give sufficient conditions for the calculation of the Hausdorff measure of both of the attractors of a class of $2$-vertex IFSs defined on $\mathbb{R}$. This adds to the work of Ayer and Strichartz \cite{Paper_Ayer_Strichartz} and Marion  \cite{Paper_Marion}. Then in Section \ref{five} we define the set of gap lengths of an attractor of any directed graph IFS defined on $\mathbb{R}$ for which the convex strong separation condition (CSSC) holds. In Section \ref{six}, by using sets of gap lengths to distinguish between attractors, we are able to show that a large family of directed graph IFSs, with any number of vertices, have attractors which are not attractors of standard ($1$-vertex) IFSs for which the CSSC holds, see Corollaries \ref{corC}, \ref{corCb} and Theorem \ref{thmA}. Finally in Section \ref{seven} we combine the results of Sections \ref{four} and \ref{six} to prove, in Theorems \ref{2GthmU} and \ref{2GthmV}, the existence of a class of $2$-vertex IFSs that have attractors that cannot be the attractors of standard ($1$-vertex) IFSs, with or without separation conditions. 

The attractors of these $2$-vertex IFSs are of interest not only because we are able to compute their Hausdorff measure, but also because they give us information about properties not shared by $1$-vertex IFSs. Also, because they are the attractors of such simple $2$-vertex IFSs, it seems likely that most directed graph IFSs produce genuinely new fractals, with many $3$-vertex IFSs having attractors that cannot be the attractors of $1$ or $2$-vertex IFSs and so on. 

A number of proofs involve checks that are routine or repetitive and so in these situations only a sketch or sample cases may be given, however full details of all proofs may be found in the thesis \cite{phdthesis_Boore}.  

\section{Notation and background theory}\label{two}
A \emph{directed graph}, $\bigl(V,E^*,i,t\bigr)$, consists of the set of all vertices $V$ and the set of all finite (directed) paths $E^*$, together with the initial and terminal vertex functions $i:E^* \to V$ and $t:E^* \to V$. $E^1$ denotes the set of all (directed) edges in the graph, that is the set of all paths of length one, with $E^1\subset E^*$. $V$ and $E^1$ are always assumed to be finite sets. We write $E^k$ for the set of all paths of length $k$, $E_{u}^k$ for the set of all paths of length $k$ starting at the vertex $u$, $E_{uv}^k$ for the set of all paths of length $k$ starting at the vertex $u$ and finishing at $v$ and so on. The \emph{initial} and \emph{terminal vertex} functions are defined as follows. Let $\be \in E^*$ be any finite path, then we may write $\be=e_1\cdots e_k$ for some edges $e_i\in E^1$, $1 \leq i \leq k$. The initial vertex of $\be$ is the initial vertex of its first edge, so $i(\be ) = i(e_1)$ and similarly for the terminal vertex $t(\be ) = t(e_k)$. 

We will often use a notation of the form $(A_c)_{c\in B}$ and $(A)_{c\in B}$, when $B$ is a finite set of $n$ elements, as this is just a convenient way of writing down ordered $n$-tuples. That is, if $B$ is ordered as $B=(b_1,b_2,\ldots,b_n)$, then $(A_c)_{c\in B}$ and $(A)_{c\in B}$ are the ordered $n$-tuples $(A_c)_{c\in B}=(A_{b_1},A_{b_2},\ldots,A_{b_n})$ and $(A)_{c\in B}=(A,A,\ldots,A)$.
  
We use the notation $\bigl(V,E^*,i,t,r,((C_{v},d_{v}))_{v \in V},(S_e)_{e \in E^1}\bigr)$ to indicate a \emph{directed graph IFS} and  $\bigl(V,E^*,i,t,r,p,((C_{v},d_{v}))_{v \in V},(S_e)_{e \in E^1}\bigr)$ for a \emph{directed graph IFS with probabilities}. $\bigl(V,E^*,i,t\bigr)$ is the directed graph of any such IFS and we always assume the directed graph  is \emph{strongly connected}, so there is at least one path connecting any two vertices. We also assume that each vertex in the directed graph has at least two edges leaving it, this is to avoid self-similar sets that consist of just single point sets, and attractors that are just scalar copies of those at other vertices, see \cite{Paper_Edgar_Mauldin}. The functions $r:E^*\to (0,1)$ and $p:E^*\to (0,1)$ assign contraction ratios and probabilities to the finite paths in the graph. To each vertex $v \in V$, is associated a complete metric space $(C_{v},d_{v})$ and to each directed edge $e\in E^1$ is assigned a contraction $S_{e}:C_{t(e)} \to C_{i(e)}$ which has the contraction ratio given by the function $r(e) = r_{e}$. We follow the convention already established in the literature, see \cite{Book_Edgar2} or \cite{Paper_Edgar_Mauldin}, that $S_e$ maps in the opposite direction to the direction of the edge it is associated with in the graph. 
 
The \emph{probability function} $p:E^*\to (0,1)$, where for an edge $e \in E^1$ we write $p(e)=p_{e}$, is such that $\sum_{ e\in E_{u}^1 }p_{e} = 1$, for any vertex $u \in V$. That is the probability weights across all the edges leaving a vertex always sum to one. For a path  $\be=e_1e_2\cdots e_k \in E^*$ we define $p(\be)=p_{\be}=p_{e_1}p_{e_2}\cdots p_{e_k}$. Similarly for the \emph{contraction ratio function} $r:E^*\to (0,1)$, the contraction ratio along a path $\be=e_1e_2\cdots e_k \in E^*$ is defined as $r(\be)=r_{\be}=r_{e_1}r_{e_2}\cdots r_{e_k}$. The ratio $r_{\be}$ is the ratio for the contraction $S_{\be}:C_{t(\be)} \to C_{i(\be)}$ along the path $\be$, where $S_{\be}=S_{e_1}\circ S_{e_2}\circ \cdots \circ S_{e_k}$.

In this paper we are only going to be concerned with directed graph IFSs defined on $n$-dimensional Euclidean space, with $((C_{v},d_{v}))_{v \in V}=((\mathbb{R}^n,\vert \ \ \vert))_{v \in V}$, and where $(S_e)_{e \in E^1}$ are contracting similarities and not just contractions. For any such IFS, $\bigl(V,E^*,i,t,r,((\mathbb{R}^n,\vert \ \ \vert))_{v \in V},(S_e)_{e \in E^1}\bigr)$, there exists a unique list of non-empty compact sets $(F_u)_{u \in V}$ satisfying  
\begin{equation}
\label{Theorem 1 b}
(F_u)_{u \in V}= \biggl( \ \bigcup_{e\in E_u^1} S_e(F_{t(e)}) \ \biggr)_{u \in V},
\end{equation} 
see Theorem 4.35, \cite{Book_Edgar2}. For the $1$-vertex case see Theorem 9.1, \cite{Book_KJF2}.  

We use the notation $\#V$ for the number of vertices in the set $V$, so $(\mathbb{R}^n)^{\#V}$ is the $\#V$-fold Cartesian product of $\mathbb{R}^n$. Also we write $K(\mathbb{R}^n)$ for the set of all non-empty compact subsets of $\mathbb{R}^n$ and $(K(\mathbb{R}^n))^{\#V}$ is the $\#V$-fold Cartesian product with $(F_u)_{u \in V} \in (K(\mathbb{R}^n))^{\#V}$.

For an IFS with probabilities, $\bigl(V,E^*,i,t,r,p,((\mathbb{R}^n,\vert \ \ \vert))_{v \in V},(S_e)_{e \in E^1}\bigr)$, there exists a unique list of Borel probability measures, $(\mu_u)_{u \in V}$, such that  
\begin{equation}
\label{Theorem 3 a}
(\mu_u(A_u))_{u \in V}=\biggl( \ \sum_{ \substack{ e\in E_u^1}  }p_{e} \mu_{t(e)}\bigl(S_{e}^{-1}(A_u)\bigr)  \ \biggr)_{u \in V}, 
\end{equation}
for all Borel sets $(A_u)_{u \in V}\subset (\mathbb{R}^n)^{\#V}$, with $(\supp (\mu_u))_{u \in V} = (F_u)_{u \in V}$, see Proposition 3, \cite{Paper_Wang}. For the $1$-vertex case see Theorem 2.8, \cite{Book_KJF1}.

The non-empty compact sets $(F_u)_{u \in V}$ of Equation (\ref{Theorem 1 b}) are often referred to as the list of attractors or self-similar sets of the IFS and the Borel probability measures, $(\mu_u)_{u \in V}$, of Equation (\ref{Theorem 3 a}), as the self-similar measures.

The \emph{open set condition} (OSC) is satisfied if and only if there exist non-empty bounded open sets $(U_u)_{u\in V} \subset  (\mathbb{R}^n)^{\#V}$, with for each $u\in V$, $S_e(U_{t(e)}) \subset U_u$ for all $e \in E_u^1$ and $S_e(U_{t(e)})\cap S_f(U_{t(f)}) = \emptyset$ for all $e,f\in E_u^1,\textrm{ with } e \neq f$. See \cite{Book_Edgar2}, \cite{Book_KJF2} or \cite{Paper_Hutchinson}.

For a set $A\subset \mathbb{R}^n$ we use the notation $C(A)$ for the convex hull of $A$, and $A^\circ$ for the interior of $A$. 

The \emph{convex strong separation condition} (CSSC) is satisfied if and only if for each $u\in V$, $S_e(C(F_{t(e)}))\cap S_f(C(F_{t(f)})) = \emptyset$ for all $e,f\in E_u^1$, with $e \neq f$. 

If the CSSC holds then the OSC is satisfied by the convex open sets $(C(F_u)^\circ)_{u \in V}$, provided $C(F_u)^\circ\neq \emptyset$ for each $u \in V$. If however $C(F_u)^\circ =\emptyset$ for some $u \in V$ then we may always reduce the dimension $n$, of the parent space $\mathbb{R}^n$, in which the IFS is constructed.

The next theorem gives the dimension of the self-similar sets provided the OSC holds, see Theorem 3, \cite{Paper_Mauldin_Williams} and for the $1$-vertex case see Theorem 9.3, \cite{Book_KJF2}. For a set $A\subset \mathbb{R}^n$, we use the usual notation $\mathcal{H}^s(A)$ for the $s$-dimensional Hausdorff measure, $\dimH A$ for the Hausdorff dimension and $\dimB A$ for the box-counting dimension. 
\begin{thm} \label{Theorem 2}
Let $\bigl(V,E^*,i,t,r,((\mathbb{R}^n,\left| \ \  \right|))_{v \in V},(S_e)_{e \in E^1}\bigr)$ be a directed graph IFS and  $(F_u)_{u \in V}$ the unique list of attractors. Let $m=\#V$ and let $\bA(t)$ denote the $m\times m$ matrix 
whose $uv$th entry is
\begin{equation*}
A_{uv}(t) = \sum_{e\in E_{uv}^1} r_e^t.
\end{equation*}
Let $\rho\left(\bA(t)\right)$ be the spectral radius of $\bA(t)$, and let $s$ be the unique non-negative real number that is the solution of $\rho\left(\bA(t)\right)=1$.  

If the OSC is satisfied then, for each $u \in V$, $s = \dimH F_u = \dimB F_u$ and $0 < \mathcal{H}^s \left(F_u\right) < +\infty$.
\end{thm}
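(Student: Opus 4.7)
The plan is to follow the Perron--Frobenius route standard for graph-directed IFSs. Strong connectedness of the graph makes the non-negative matrix $\bA(t)$ irreducible, so for each $t\geq 0$ its spectral radius $\rho(\bA(t))$ is a simple eigenvalue with strictly positive right eigenvector, and $t\mapsto\rho(\bA(t))$ is continuous, strictly decreasing on $[0,\infty)$, and tends to $0$ as $t\to\infty$. Because each vertex has at least two out-edges, every row sum of $\bA(0)$ is at least $2$, so $\rho(\bA(0))\geq 2$; hence there is a unique $s\geq 0$ with $\rho(\bA(s))=1$. Fix a strictly positive right eigenvector $\bh=(h_v)_{v\in V}$ with $\sum_v A_{uv}(s)h_v=h_u$. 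For the upper bound I would iterate \eqref{Theorem 1 b} so that $F_u=\bigcup_{\be\in E_u^k} S_{\be}(F_{t(\be)})$ for every $k$; writing $D=\max_v\mathrm{diam}(F_v)$ and $\delta_k=D\max_{\be\in E_u^k} r_{\be}\to 0$, the induced $\delta_k$-cover gives
\begin{equation*}
\mathcal{H}^s_{\delta_k}(F_u)\leq D^s\sum_{\be\in E_u^k} r_{\be}^s=D^s[\bA(s)^k\mathbf{1}]_u\leq \frac{D^s h_u}{\min_v h_v},
\end{equation*}
since $\bA(s)^k\bh=\bh$. So $\mathcal{H}^s(F_u)<\infty$ and $\dimH F_u\leq s$; the same cylinder partitions, cut off at the first $\be$ with $r_{\be}<\delta$, yield $O(\delta^{-s})$ cover sets of diameter $\leq D\delta$ and hence $\dimB F_u\leq s$.

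For the lower bound I would equip the IFS with probabilities $p_e:=r_e^s h_{t(e)}/h_{i(e)}$; the eigenvector equation gives $\sum_{e\in E_u^1} p_e=1$, so the self-similar measures $(\mu_u)_{u\in V}$ of \eqref{Theorem 3 a} are well defined with $\supp\mu_u=F_u$, and $p_{\be}=r_{\be}^s h_{t(\be)}/h_{i(\be)}$ by telescoping along any path $\be$. The strategy is the mass distribution principle: show
\begin{equation*}
\mu_u(B(x,\delta))\leq K\delta^s
\end{equation*}
for all $x\in\mathbb{R}^n$ and small $\delta>0$, with $K$ independent of $x,\delta,u$, which gives $\mathcal{H}^s(F_u)\geq \mu_u(F_u)/K=1/K>0$ and hence $\dimH F_u\geq s$. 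Using the OSC open sets $(U_v)_{v\in V}$, I would choose $a,b>0$ so that each $U_v$ contains a ball of radius $a$ and lies in a ball of radius $b$, and for each $\delta>0$ define $\Gamma_u(\delta)$ to be the antichain of paths $\be$ starting at $u$ stopped at the first index where $r_{\be}<\delta$; then $r_{\min}\delta\leq r_{\be}<\delta$ where $r_{\min}=\min_{e\in E^1} r_e$, and the OSC makes $(S_{\be}(U_{t(\be)}))_{\be\in\Gamma_u(\delta)}$ pairwise disjoint subsets of $U_u$.

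The crux of the argument, and what I expect to be the main obstacle, is the geometric counting step controlling how many $\be\in\Gamma_u(\delta)$ can have $S_{\be}(\overline{U}_{t(\be)})$ meeting $B(x,\delta)$. Each such image contains a ball of radius $\geq a r_{\be}\geq a r_{\min}\delta$, and since $\mathrm{diam}\, S_{\be}(U_{t(\be)})\leq 2b r_{\be}<2b\delta$ it lies inside $B(x,(1+2b)\delta)$; disjointness and a Lebesgue volume comparison bound their number by $N:=((1+2b)/(a r_{\min}))^n$. Iterating \eqref{Theorem 3 a} down to the stopping antichain and discarding cylinders whose images miss $B(x,\delta)$ then gives
\begin{equation*}
\mu_u(B(x,\delta))\leq \sum_{\substack{\be\in\Gamma_u(\delta)\\ S_{\be}(F_{t(\be)})\cap B(x,\delta)\neq\emptyset}} p_{\be}\leq N\delta^s\,\frac{\max_v h_v}{\min_v h_v},
\end{equation*}
the desired uniform density bound. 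Combining with $\dimB F_u\leq s$ from the first paragraph yields $\dimH F_u=\dimB F_u=s$ and $0<\mathcal{H}^s(F_u)<\infty$ for every $u\in V$.
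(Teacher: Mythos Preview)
The paper does not prove this theorem: it is stated as background and attributed to Mauldin--Williams (Theorem~3 of \cite{Paper_Mauldin_Williams}, with the $1$-vertex case as Theorem~9.3 of \cite{Book_KJF2}), with no argument given. Your proposal is essentially a sketch of that classical proof---Perron--Frobenius for the existence and uniqueness of $s$, natural cylinder covers for the upper bound on $\mathcal{H}^s$ and $\dimB$, and the mass distribution principle applied to the self-similar measures with weights $p_e=r_e^s h_{t(e)}/h_{i(e)}$ for the lower bound---and it is correct as outlined. One small point you leave implicit is that $F_v\subset\overline{U_v}$ for each $v$, which is needed to pass from ``$S_{\be}(F_{t(\be)})$ meets $B(x,\delta)$'' to ``$S_{\be}(\overline{U}_{t(\be)})$ meets $B(x,\delta)$'' in the counting step; this follows by iterating the OSC inclusions and is standard.
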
 

\section{A density result}\label{three}

In this section we consider an IFS $\bigl(V,E^*,i,t,r,p,((\mathbb{R}^n,\left| \ \  \right|))_{v \in V},(S_e)_{e \in E^1}\bigr)$, which satisfies the OSC, so the conclusions of Theorem \ref{Theorem 2} all hold for the list of attractors $(F_u)_{u \in V}$. Our aim is to prove the density result of Corollary \ref{2GcorG}. The directed graph is strongly connected so the non-negative matrix $\bA(s)$ is irreducible. By the Perron-Frobenius Theorem, see \cite{Book_Seneta}, we take $\bh=(h_v)_{v \in V}^T$ to be the positive eigenvector, which is unique up to a scaling factor, such that $\bA(s)\bh = \rho\left(\bA(s)\right) \bh = \bh$. 

We explicitly define the probability function $p:E^*\to (0,1)$, for each path $\be \in E^*$, as
$p_{\be} = h_{i(\be)}^{-1}r_{\be}^sh_{t(\be)}$. Since $\sum_{ \substack{ e\in E_u^1}  } p_e = \sum_{ \substack{ e\in E_u^1}  } h_u^{-1}r_e^sh_{t(e)} = h_u^{-1}(\bA (s) \bh)_u=h_u^{-1}h_u=1$, at each vertex $u \in V$, this defines a valid probability function for the graph, see Section \ref{two}. 

For the unique list of self-similar measures $(\mu_u)_{u\in V}$, Equation (\ref{Theorem 3 a}) is now  
\begin{equation}
\label{2Gself_similar_measure_2}
(\mu_u(A_u))_{u \in V} = \biggl( \ \sum_{ \substack{ e\in E_u^1}  } h_u^{-1}r_e^sh_{t(e)} \mu_{t(e)}(S_e^{-1}(A_u)) \ \biggr)_{u \in V},  
\end{equation}
for all Borel sets $(A_u)_{u \in V}\subset (\mathbb{R}^n)^{\#V}$.

Let $\bv$ and $\bw$ be two real $n$-dimensional (column) vectors, then $\bv \leq \bw$ if and only if $v_i \leq w_i$ for all i, $1 \leq i \leq n$, and similarly for $\bv < \bw$. 
\begin{lem}
\label{2GlemA}
Let $\bM$ be a non-negative irreducible $n\times n$ matrix with spectral radius $\rho(\bM)=1$. Suppose $\bv=(v_1, v_2,  \ldots ,  v_n)^T$ is a positive vector such that $\mathbf{0}< \bv \leq \bM \bv$, then $\bv = \bM \bv$.
\end{lem}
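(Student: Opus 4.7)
The plan is to use the Perron--Frobenius theorem applied to the transpose $\mathbf{M}^T$ in order to pair the inequality $\mathbf{v} \leq \mathbf{M}\mathbf{v}$ against a strictly positive left eigenvector and squeeze out equality.

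More precisely, since $\mathbf{M}$ is non-negative and irreducible, so is $\mathbf{M}^T$, and it has the same spectral radius $\rho(\mathbf{M}^T) = \rho(\mathbf{M}) = 1$. By the Perron--Frobenius theorem (the same reference \cite{Book_Seneta} used just before the lemma), there exists a strictly positive (row) vector $\mathbf{w} = (w_1, \ldots, w_n)$ with all $w_i > 0$ such that $\mathbf{w}\mathbf{M} = \mathbf{w}$.

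Now I would set $\mathbf{u} := \mathbf{M}\mathbf{v} - \mathbf{v}$. By hypothesis $\mathbf{u} \geq \mathbf{0}$, i.e.\ every entry $u_i$ is non-negative. Pairing with $\mathbf{w}$ on the left gives
\begin{equation*}
\mathbf{w}\mathbf{u} = \mathbf{w}\mathbf{M}\mathbf{v} - \mathbf{w}\mathbf{v} = \mathbf{w}\mathbf{v} - \mathbf{w}\mathbf{v} = 0,
\end{equation*}
that is, $\sum_{i=1}^n w_i u_i = 0$. But each $w_i > 0$ and each $u_i \geq 0$, so every term in this sum is non-negative and the sum vanishes only if each $u_i = 0$. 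Hence $\mathbf{u} = \mathbf{0}$, which is exactly $\mathbf{v} = \mathbf{M}\mathbf{v}$, as required.

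There is not really a hard step here; the only content is recognising that the correct tool is the \emph{left} Perron eigenvector rather than a right one, so that the inequality $\mathbf{v} \leq \mathbf{M}\mathbf{v}$ can be tested against a strictly positive functional which is invariant under $\mathbf{M}$. Positivity of $\mathbf{v}$ is in fact not needed for this argument; only $\mathbf{v} \leq \mathbf{M}\mathbf{v}$ (entrywise) and $\rho(\mathbf{M}) = 1$ with $\mathbf{M}$ non-negative and irreducible are used.
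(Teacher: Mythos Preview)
Your argument is correct and is precisely the standard Perron--Frobenius computation that the paper is invoking: the paper's own proof consists solely of the remark ``This follows from standard Perron-Frobenius theory, see \cite{Book_Seneta}'', and what you have written is exactly the usual way this implication is extracted from that theory. Your observation that the hypothesis $\mathbf{v}>\mathbf{0}$ is not actually used in the argument is also accurate.
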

\begin{proof} This follows from standard Perron-Frobenius theory, see \cite{Book_Seneta}.
\end{proof}

\begin{lem}
\label{2GlemB} 
$ (\mathcal{H}^s(F_v))_{v \in V}^T$ is the unique (up to scaling) positive eigenvector of the matrix $\bA(s)$, that is 
\begin{equation*}
\bA(s) (\mathcal{H}^s(F_v))_{v \in V}^T= (\mathcal{H}^s(F_v))_{v \in V}^T.
\end{equation*}
\end{lem}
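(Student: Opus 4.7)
The plan is to exhibit $(\mathcal{H}^s(F_v))_{v\in V}^T$ as a positive vector that is dominated (componentwise) by its image under $\bA(s)$, and then invoke Lemma \ref{2GlemA} to force equality, with uniqueness coming from standard Perron--Frobenius theory.

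First I would apply the defining identity (\ref{Theorem 1 b}): for each $u \in V$,
\[
F_u = \bigcup_{e \in E_u^1} S_e(F_{t(e)}).
\]
Using countable subadditivity of Hausdorff measure together with the scaling rule $\mathcal{H}^s(S_e(X)) = r_e^s\,\mathcal{H}^s(X)$ (valid since each $S_e$ is a similarity with ratio $r_e$), this gives
\[
\mathcal{H}^s(F_u) \leq \sum_{e \in E_u^1} r_e^s \, \mathcal{H}^s(F_{t(e)}) = \sum_{v \in V} A_{uv}(s)\,\mathcal{H}^s(F_v),
\]
so setting $\bv := (\mathcal{H}^s(F_v))_{v \in V}^T$ we obtain $\bv \leq \bA(s)\bv$ componentwise.

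Next I would verify the hypotheses of Lemma \ref{2GlemA}. By Theorem \ref{Theorem 2}, the OSC yields $0 < \mathcal{H}^s(F_u) < +\infty$ for every $u \in V$, so $\bv$ is a (finite) strictly positive vector. The matrix $\bA(s)$ is nonnegative, and it is irreducible because the directed graph $(V,E^*,i,t)$ is strongly connected (paths between any two vertices give positive entries in some power of $\bA(s)$). Moreover, $\rho(\bA(s)) = 1$ by the very definition of $s$ in Theorem \ref{Theorem 2}. Therefore Lemma \ref{2GlemA} applies and promotes $\bv \leq \bA(s)\bv$ to the equality $\bA(s)\bv = \bv$.

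Finally, the uniqueness (up to a positive scalar) of the positive eigenvector follows from the Perron--Frobenius theorem for irreducible nonnegative matrices: the Perron eigenvalue $\rho(\bA(s)) = 1$ is simple and admits a one-dimensional eigenspace spanned by a strictly positive vector. I do not anticipate any real obstacle here; the only point that deserves care is the direction of the inequality produced by subadditivity (which matches exactly the hypothesis of Lemma \ref{2GlemA}) and the confirmation that $\bv$ is finite and strictly positive, both of which are immediate from Theorem \ref{Theorem 2}.
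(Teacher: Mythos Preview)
Your proposal is correct and follows essentially the same approach as the paper: establish the componentwise inequality $\bv \leq \bA(s)\bv$ via subadditivity and the scaling property of $\mathcal{H}^s$, then invoke Lemma~\ref{2GlemA} together with the positivity from Theorem~\ref{Theorem 2} and the irreducibility of $\bA(s)$ to obtain equality, with uniqueness coming from Perron--Frobenius. The paper's proof is just a terser version of exactly this argument.
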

\begin{proof}$\mathcal{H}^s(F_u) \leq \sum_{ \substack{ e\in E_u^1}  } \mathcal{H}^s(S_e(F_{t(e)})) = \sum_{ \substack{ e\in E_u^1}  } r_e^s\mathcal{H}^s(F_{t(e)}) = \bigl(\bA(s) (\mathcal{H}^s(F_v))_{v \in V}^T\bigr)_u$, so $(\mathcal{H}^s(F_v))_{v \in V}^T$ is a positive vector for which $\mathbf{0}< (\mathcal{H}^s(F_v))_{v \in V}^T \leq \bA(s) (\mathcal{H}^s(F_v))_{v \in V}^T$. The matrix $\bA(s)$ is non-negative and irreducible with spectral radius $\rho(\bA(s))=1$. Applying Lemma \ref{2GlemA} completes the proof. 
\end{proof}

Given Lemma \ref{2GlemB} we put $\bh=(h_v)_{v \in V}^T=(\mathcal{H}^s(F_v))_{v \in V}^T$ to denote the eigenvector of $\bA(s)$, using any of these notations as appropriate from now on. The next lemma states that the self-similar measures of Equation (\ref{2Gself_similar_measure_2}) are in fact restricted normalised Hausdorff measures. 
\begin{lem}
\label{2GlemC}
For each $u \in V$,
\begin{equation*}
\mu_u(A)=\frac{\mathcal{H}^s(F_u\cap A )}{\mathcal{H}^s(F_u)}=h_u^{-1}\mathcal{H}^s(F_u\cap A),
\end{equation*}
for all Borel sets $A\subset\mathbb{R}^n$.
\end{lem}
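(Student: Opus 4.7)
The natural plan is to prove the claim by showing that the tuple of measures defined by $\nu_u(A) := h_u^{-1} \mathcal{H}^s(F_u \cap A)$ satisfies the same self-similarity identity as the $(\mu_u)_{u \in V}$ of Equation (\ref{2Gself_similar_measure_2}), and then invoking uniqueness of the self-similar measure tuple (the statement cited from Wang). First I would check the elementary bookkeeping: each $\nu_u$ is clearly a Borel measure on $\mathbb{R}^n$, and it is a probability measure because, using Lemma \ref{2GlemB} which identifies $h_u$ with $\mathcal{H}^s(F_u)$,
\[
\nu_u(\mathbb{R}^n) = h_u^{-1} \mathcal{H}^s(F_u) = 1.
\]
Moreover $\supp(\nu_u) \subset F_u$, matching the required support condition.

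The key computational step is to establish the identity
\[
\mathcal{H}^s(F_u \cap A) = \sum_{e \in E_u^1} r_e^s \, \mathcal{H}^s\bigl(F_{t(e)} \cap S_e^{-1}(A)\bigr)
\]
for every Borel $A \subset \mathbb{R}^n$. Starting from (\ref{Theorem 1 b}), one writes $F_u \cap A = \bigcup_{e \in E_u^1} \bigl(S_e(F_{t(e)}) \cap A\bigr)$. Under the OSC the overlaps $S_e(F_{t(e)}) \cap S_f(F_{t(f)})$ for $e \neq f$ in $E_u^1$ have $\mathcal{H}^s$-measure zero (a standard consequence of the OSC for graph-directed systems, used implicitly in Theorem \ref{Theorem 2}, since otherwise the inequality in the proof of Lemma \ref{2GlemB} would be strict, contradicting $\bA(s)\bh = \bh$). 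Hence the sum is disjoint up to $\mathcal{H}^s$-null sets. Combining with the similarity scaling property $\mathcal{H}^s(S_e(B)) = r_e^s \mathcal{H}^s(B)$, applied to $B = F_{t(e)} \cap S_e^{-1}(A)$, yields the displayed identity.

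Dividing through by $h_u$ and inserting factors of $h_{t(e)} h_{t(e)}^{-1}$ on the right, the identity becomes
\[
\nu_u(A) = \sum_{e \in E_u^1} h_u^{-1} r_e^s h_{t(e)} \, \nu_{t(e)}\bigl(S_e^{-1}(A)\bigr),
\]
which is exactly (\ref{2Gself_similar_measure_2}) with the probabilities $p_e = h_{i(e)}^{-1} r_e^s h_{t(e)}$ chosen in Section \ref{three}. Since $(\nu_u)_{u \in V}$ is a tuple of Borel probability measures satisfying the defining equation, uniqueness forces $\mu_u = \nu_u$ for each $u \in V$, which is the desired conclusion.

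The main obstacle is the justification that the pieces $S_e(F_{t(e)})$ overlap in an $\mathcal{H}^s$-null set; this is the only non-formal ingredient and relies on the OSC together with $0 < \mathcal{H}^s(F_u) < \infty$ from Theorem \ref{Theorem 2}. Everything else is a direct translation between the measure-theoretic form of the self-similar identity and the Hausdorff-measure decomposition, combined with the Perron--Frobenius identification of $\bh$ from Lemma \ref{2GlemB}.
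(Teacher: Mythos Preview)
Your proposal is correct and follows exactly the approach the paper takes: the paper's proof is simply stated as a routine verification that the restricted normalised Hausdorff measures $(h_u^{-1}\mathcal{H}^s(F_u\cap A_u))_{u\in V}$ satisfy Equation~(\ref{2Gself_similar_measure_2}), after which uniqueness of the self-similar measure tuple gives the result. Your write-up in fact supplies more detail than the paper does, in particular the justification (via Lemma~\ref{2GlemB} and the OSC) that the overlaps $S_e(F_{t(e)})\cap S_f(F_{t(f)})$ are $\mathcal{H}^s$-null.
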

\begin{proof}
This follows by a routine verification of Equation (\ref{2Gself_similar_measure_2}) for the list of restricted normalised Hausdorff measures $(h_u^{-1}\mathcal{H}^s(F_u  \cap A_u ))_{u \in V}$, where $(A_u)_{u \in V} \subset (\mathbb{R}^n)^{\#V}$ are any Borel sets. See Lemma 3.2.3 \cite{phdthesis_Boore} or \cite{Paper_Wang} for details.
\end{proof}
The notion of an $s$-straight set provides a useful intermediate step in the argument that follows, see \cite{Paper_Delaware}. A set $B\subset \mathbb{R}^n$ is \emph{$s$-straight} if
\begin {equation*}
\mathcal{H}_\infty^s(B)=\mathcal{H}^s(B) < +\infty.
\end {equation*}
Here $\mathcal{H}_\infty^s(B)=\inf\left\{\sum_{i=1}^{\infty} \left|U_i\right|^s : \{U_i\} \textrm{ is a cover of } B   \right\}$ is the Hausdorff $s$-content where there is no restriction on the diameters of the covering sets. 

\begin{lem}
\label{2GlemD}
If $B\subset \mathbb{R}^n$ is $s$-straight then $\mathcal{H}^s(A) \leq \left|A\right|^s$, for all $\mathcal{H}^s$-measurable subsets $A\subset B$.
\end{lem}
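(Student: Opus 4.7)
The plan is to exploit the coincidence $\mathcal{H}^s_\infty(B) = \mathcal{H}^s(B)$ together with countable subadditivity of the Hausdorff $s$-content, using the measurability of $A$ to split $\mathcal{H}^s(B)$ additively and then cancel the finite term $\mathcal{H}^s(B\setminus A)$.

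First I would observe the easy upper bound $\mathcal{H}^s_\infty(A)\leq |A|^s$, obtained by taking $\{A\}$ itself as a one-set cover of $A$. Next, since $B = A \cup (B\setminus A)$, countable subadditivity of the $s$-content gives
\begin{equation*}
\mathcal{H}^s_\infty(B) \leq \mathcal{H}^s_\infty(A) + \mathcal{H}^s_\infty(B\setminus A) \leq |A|^s + \mathcal{H}^s(B\setminus A),
\end{equation*}
where the last inequality uses the general fact $\mathcal{H}^s_\infty\leq \mathcal{H}^s$.

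Now I would invoke the $s$-straight hypothesis to replace $\mathcal{H}^s_\infty(B)$ by $\mathcal{H}^s(B)$, and use the $\mathcal{H}^s$-measurability of $A$ (Carath\'eodory's criterion applied with test set $B$) to write
\begin{equation*}
\mathcal{H}^s(B) = \mathcal{H}^s(B\cap A) + \mathcal{H}^s(B\setminus A) = \mathcal{H}^s(A) + \mathcal{H}^s(B\setminus A).
\end{equation*}
Combining the two displays yields $\mathcal{H}^s(A) + \mathcal{H}^s(B\setminus A) \leq |A|^s + \mathcal{H}^s(B\setminus A)$. Because $B$ is $s$-straight, $\mathcal{H}^s(B) < +\infty$, so $\mathcal{H}^s(B\setminus A) < +\infty$ as well, and this finite quantity may be cancelled from both sides to deliver $\mathcal{H}^s(A) \leq |A|^s$.

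There is no real obstacle here beyond making sure the cancellation is legitimate; the only thing to be careful about is that $\mathcal{H}^s(B\setminus A)$ is finite, which is precisely what the $s$-straight assumption guarantees via $\mathcal{H}^s(B)<+\infty$. The role of ``$s$-straight'' (as opposed to just having finite $\mathcal{H}^s$-measure) is exactly to force $\mathcal{H}^s(B)\leq \mathcal{H}^s_\infty(A)+\mathcal{H}^s(B\setminus A)$ rather than the trivial and useless inequality going the other way.
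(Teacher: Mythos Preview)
Your proof is correct and is essentially the same argument as the paper's: both obtain $\mathcal{H}^s_\infty(B)\leq |A|^s+\mathcal{H}^s(B\setminus A)$ by covering $B$ with $A$ together with a near-optimal cover of $B\setminus A$, and then compare with the additive split $\mathcal{H}^s(B)=\mathcal{H}^s(A)+\mathcal{H}^s(B\setminus A)$. The only cosmetic difference is that the paper phrases it as a contradiction with an explicit $\varepsilon$, whereas you write the inequality chain directly and cancel; your version is a bit cleaner.
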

\begin{proof}
For a contradiction we assume there is an $\mathcal{H}^s$-measurable subset $A\subset B$ such that
$0 < \left|A\right|^s < \mathcal{H}^s(A) - \varepsilon$, for some $\varepsilon>0$. We may find a cover $\left\{U_i\right\}$ of $B \setminus A $ with $\sum_{i=1}^\infty\left|U_i\right|^s \leq  \mathcal{H}^s(B\setminus A) + \frac{\varepsilon}{2}$. It follows that $B \subset A \bigcup \left(\bigcup_{i=1}^\infty U_i\right)$, and so $\mathcal{H}_\infty^s(B) \leq \left|A\right|^s + \sum_{i=1}^\infty \left|U_i\right|^s \leq  \left|A\right|^s + \mathcal{H}^s(B\setminus A) + \frac{\varepsilon}{2}  
<  \mathcal{H}^s(A) - \varepsilon + \mathcal{H}^s(B\setminus A) + \frac{\varepsilon}{2} =  \mathcal{H}^s(B) - \frac{\varepsilon}{2}$. This implies $\mathcal{H}_\infty^s(B) \neq \mathcal{H}^s(B)$ which is a contradiction.
\end{proof}
We remind the reader that in this section $(F_u)_{u \in V}$ is the unique list of attractors of a directed graph IFS, $\bigl(V,E^*,i,t,r,((\mathbb{R}^n,\left| \ \  \right|))_{v \in V},(S_e)_{e \in E^1}\bigr)$, for which the OSC holds.
\begin{lem}
\label{2GlemF}
$F_u$ is $s$-straight for all $u \in V$.
\end{lem}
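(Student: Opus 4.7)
My approach is to establish the inequality $\mathcal{H}^s_\infty(F_u) \geq \mathcal{H}^s(F_u)$ for each $u \in V$; the reverse inequality $\mathcal{H}^s_\infty(F_u) \leq \mathcal{H}^s(F_u)$ holds automatically from the definitions, and together these give the $s$-straight property.

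The first step is to show that $\bh^\ast := (\mathcal{H}^s_\infty(F_v))_{v \in V}^T$ is a positive Perron-Frobenius eigenvector of $\bA(s)$. Applying countable subadditivity of $\mathcal{H}^s_\infty$ together with the scaling $\mathcal{H}^s_\infty(S_e(A)) = r_e^s \mathcal{H}^s_\infty(A)$ (valid because $S_e$ is a similarity of ratio $r_e$) to the self-similar decomposition $F_u = \bigcup_{e \in E_u^1} S_e(F_{t(e)})$ yields
\[
\mathcal{H}^s_\infty(F_u) \leq \sum_{e \in E_u^1} r_e^s \mathcal{H}^s_\infty(F_{t(e)}),
\]
that is, $\bh^\ast \leq \bA(s)\bh^\ast$ componentwise. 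Since $\mathcal{H}^s(F_v) > 0$ (Theorem \ref{Theorem 2}) implies $\mathcal{H}^s_\infty(F_v) > 0$ by Frostman's lemma, one has $\mathbf{0} < \bh^\ast$, so Lemma \ref{2GlemA} forces equality $\bh^\ast = \bA(s)\bh^\ast$. Combined with the uniqueness (up to positive scaling) of the positive Perron-Frobenius eigenvector used in Lemma \ref{2GlemB}, this gives $\bh^\ast = c\bh$ for a single constant $c \in (0, 1]$; equivalently, $\mathcal{H}^s_\infty(F_v) = c\,\mathcal{H}^s(F_v)$ with the same $c$ at every vertex.

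The remaining step is to prove $c = 1$. My strategy is a level-$k$ covering argument using the OSC. Given any cover $\{U_i\}$ of $F_u$, compactness of $F_u$ and a mild enlargement let me assume $\{U_i\}$ is finite and open with positive Lebesgue number $\delta$. Choosing $k$ so large that every level-$k$ cylinder $S_\be(F_{t(\be)})$, $\be \in E_u^k$, has diameter below $\delta$, each such cylinder is contained in some $U_i$, partitioning $E_u^k = \bigsqcup_i \mathcal{B}_i$. Under OSC the cylinders have pairwise $\mathcal{H}^s$-null intersections, so $\mathcal{H}^s(F_u) = \sum_i \sum_{\be \in \mathcal{B}_i} r_\be^s h_{t(\be)}$. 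Iterating the identity $\bh^\ast = \bA(s)\bh^\ast$ in Step 1 forces equality throughout the subadditivity bound for $\mathcal{H}^s_\infty$ at every level, so that writing $W_i := \bigcup_{\be \in \mathcal{B}_i} S_\be(F_{t(\be)})$ one obtains $\mathcal{H}^s_\infty(W_i) = c \sum_{\be \in \mathcal{B}_i} r_\be^s h_{t(\be)}$, which combined with the single-set estimate $\mathcal{H}^s_\infty(W_i) \leq |U_i|^s$ gives the per-$i$ bound $c \sum_{\be \in \mathcal{B}_i} r_\be^s h_{t(\be)} \leq |U_i|^s$.

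The main obstacle is that summing this per-$i$ bound yields only $c h_u \leq \sum_i |U_i|^s$ and hence on taking infimum the tautology $c h_u \leq c h_u$, without pinning down $c$. Breaking this stalemate requires a sharper use of the OSC separation at a scale matched to the cover: by invoking Schief's strong OSC and choosing covers of $F_u$ that are not coverable by a single cylinder of smaller $s$-content, one must produce, for some vertex, a cover realising a bound strictly tighter than $c$ times the measure, thereby forcing $c = 1$ via the strong connectedness of the graph. The careful bookkeeping and the appeal to SOSC here is the delicate part of the argument.
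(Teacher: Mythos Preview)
Your Step 1 is correct and in fact slightly sharper than the paper's preliminary reduction: you obtain a single constant $c\in(0,1]$ with $\mathcal{H}^s_\infty(F_v)=c\,\mathcal{H}^s(F_v)$ for every $v$, whereas the paper only argues that a strict inequality at one vertex propagates to all vertices. Your claim that equality $\bh^\ast=\bA(s)\bh^\ast$ forces $\mathcal{H}^s_\infty(W_i)=c\sum_{\be\in\mathcal B_i}r_\be^s h_{t(\be)}$ for the partial unions $W_i$ is also fine, by sandwiching.

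The gap is in Step 2. You correctly identify that the per-$i$ bound $\mathcal{H}^s_\infty(W_i)\le|U_i|^s$ sums to the tautology $ch_u\le ch_u$, but the proposed escape via the strong OSC is not an argument: you do not say what property of SOSC covers you would use, nor how it would yield a cover of some $F_v$ with $\sum|U_i|^s<c\,h_v$. No such cover can exist, by definition of $\mathcal{H}^s_\infty$, so this route is blocked.

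The missing idea is the one the paper uses, and it requires only the OSC already assumed. Once you know $\mathcal{H}^s_\infty(F_v)=c\,h_v$ for every $v$, fix any $c'<1$ with $c<c'$ and choose for each $v$ a (diameter-unrestricted) cover $\{U_{v,i}\}$ of $F_v$ with $\sum_i|U_{v,i}|^s<c'h_v$. Given $\delta>0$, pick $k$ so large that $r_\be\max_v|F_v|<\delta$ for all $\be\in E_u^k$; then
\[
F_u\ \subset\ \bigcup_{\be\in E_u^k}\ \bigcup_i S_\be(U_{t(\be),i})
\]
is a $\delta$-cover, and
\[
\mathcal{H}^s_\delta(F_u)\ \le\ \sum_{\be\in E_u^k} r_\be^s\sum_i|U_{t(\be),i}|^s\ <\ c'\sum_{\be\in E_u^k} r_\be^s h_{t(\be)}\ =\ c'\,h_u,
\]
the last equality by $\bA(s)^k\bh=\bh$. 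Letting $\delta\to0$ gives $\mathcal{H}^s(F_u)\le c'h_u<h_u$, contradicting $\mathcal{H}^s(F_u)=h_u$. Hence $c=1$.

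The point is not to bound $\mathcal{H}^s_\infty$ again (which is circular) but to promote the unrestricted covers witnessing $\mathcal{H}^s_\infty(F_v)<h_v$ into $\delta$-covers by pushing them through the level-$k$ similarities, thereby bounding $\mathcal{H}^s$ itself.
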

\begin{proof}
For a contradiction assume there exists $u \in V$ with $0 \leq \mathcal{H}_\infty^s(F_u) < \mathcal{H}^s(F_u)$.

Consider a vertex $v \in V$, $v \neq u$. As the graph is strongly connected we can always find a path $\be$ from the vertex $v$ to $u$,  and suppose such a path has length $m$, then $F_v = \bigcup_{ \substack{\be \in E_v^m}  } S_\be (F_{t(\be)})$. This implies $\mathcal{H}_\infty^s(F_v) \leq  \sum_{ \substack{\be \in E_v^m} } \mathcal{H}_\infty^s(S_\be (F_{t(\be)})) = \sum_{ \substack{\be \in E_v^m} } r_{\be}^s\mathcal{H}_\infty^s(F_{t(\be)}) < \sum_{ \substack{\be \in E_v^m} } r_{\be}^s\mathcal{H}^s(F_{t(\be)})$, where the strict inequality follows by our initial assumption, as $t(\be)=u$ for at least one path $\be \in E_v^m$. Applying Lemma \ref{2GlemB} gives 
\begin{equation*}
\mathcal{H}_\infty^s(F_v)< \sum_{ \substack{\be \in E_v^m} } r_{\be}^s\mathcal{H}^s(F_{t(\be)})=\bigl(\bA(s)^m (\mathcal{H}^s(F_w))_{w \in V}^T\bigr)_v=\mathcal{H}^s(F_v).
\end{equation*}
This argument may be repeated for any vertex, so $0 \leq \mathcal{H}_\infty^s(F_v) < \mathcal{H}^s(F_v)$, for all $v \in V$.

Let $h_{\textrm{max}}=\max\left\{h_v : v \in V\right\}$ and let $\varepsilon>0$ be given by
\begin{equation}
\label{vareps}
\varepsilon=\min \left\{ \  \frac{h_{\textrm{max}}}{2}, \ \min \left\{\frac{\mathcal{H}^s(F_v)-\mathcal{H}_\infty^s(F_v)}{2} :  v \in V\right\} \  \right\}.
\end{equation}
For each $v \in V$, we may choose some cover $\left\{U_{v,i}\right\}$ of $F_v$, with no diameter restriction, such that $\sum_{i=1}^\infty \left|U_{v,i}\right|^s < \mathcal{H}_\infty^s(F_v) + \varepsilon \leq \mathcal{H}^s(F_v) - \varepsilon$. For a given $\delta>0$, we may choose $k \in \mathbb{N}$ large enough so that $F_u = \bigcup_{ \substack{\be \in E_u^k}  } S_\be (F_{t(\be)})\subset \bigcup_{ \substack{\be \in E_u^k}  } S_\be \bigl(  \bigcup_{i=1}^\infty U_{t(\be),i}  \bigr)= \bigcup_{ \substack{\be \in E_u^k}  } \bigcup_{i=1}^\infty S_\be(U_{t(\be),i})$
where the last term is a $\delta$-cover of $F_u$. By Lemma  \ref{2GlemB}, $\sum_{ \substack{\be \in E_u^k} } h_u^{-1}r_{\be}^sh_{t(\be)}=h_u^{-1}\bigl(\bA(s)^k \bh \bigr)_u=1$, so $\sum_{ \substack{\be \in E_u^k} } r_{\be}^s\mathcal{H}^s(F_{t(\be)})=\sum_{ \substack{\be \in E_u^k} } r_{\be}^s h_{t(\be)}= h_u$ and $\sum_{ \substack{\be \in E_u^k} } r_{\be}^s \geq \frac{h_u}{h_{\textrm{max}}}$.

These results imply
\begin{equation*}
\mathcal{H}_\delta^s(F_u) \leq  \sum_{ \substack{\be \in E_u^k} } \sum_{i=1}^\infty \left|S_\be(U_{t(\be),i})\right|^s < \sum_{ \substack{\be \in E_u^k} } r_{\be}^s\bigl(\mathcal{H}^s(F_{t(\be)}) - \varepsilon \bigr) \leq h_u\Bigl(1-\frac{\varepsilon}{h_{\textrm{max}}}\Bigr).
\end{equation*}
From the choice of $\varepsilon$ in (\ref{vareps}), $0< \varepsilon \leq \frac{h_{\textrm{max}}}{2}$, which ensures $\frac{1}{2}\leq \bigl(1-\frac{\varepsilon}{h_{\textrm{max}}}\bigr)<1$ and as this argument holds for any $\delta$ we may conclude that $\mathcal{H}^s(F_u) \leq h_u\bigl(1-\frac{\varepsilon}{h_{\textrm{max}}}\bigr) < h_u = \mathcal{H}^s(F_u)$, which is the required contradiction. 
\end{proof}

\begin{cor}
\label{2GcorG}\textup{(a)} \quad $\mathcal{H}^s(A) \leq \left|A\right|^s$  for all $\mathcal{H}^s$-measurable subsets $A\subset F_u$, 
\begin{equation*}
\quad \quad \ \textup{(b)} \quad  \sup \biggl\{\frac{\mathcal{H}^s(A)}{\left|A\right|^s}  :  A \textup{ is }\mathcal{H}^s \textup{-measurable}, \, A\subset F_u\biggr\}=1. 
\end{equation*}
\end{cor}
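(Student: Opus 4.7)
Part (a) should be essentially immediate. Lemma \ref{2GlemF} asserts that $F_u$ is $s$-straight, and Lemma \ref{2GlemD} says that for any $s$-straight set $B$, every $\mathcal{H}^s$-measurable subset $A \subset B$ satisfies $\mathcal{H}^s(A) \leq |A|^s$. Applying this with $B = F_u$ gives part (a) with no further work.

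For part (b), part (a) already yields that the supremum is at most $1$, so the only thing left is to show it is at least $1$. My plan is to argue by contradiction: suppose the supremum equals some $c < 1$, so that $\mathcal{H}^s(A) \leq c|A|^s$ for every $\mathcal{H}^s$-measurable $A \subset F_u$. I would then exploit $s$-straightness of $F_u$ (Lemma \ref{2GlemF}) to choose, for arbitrary $\varepsilon > 0$, an unrestricted cover $\{U_i\}$ of $F_u$ with
\begin{equation*}
\sum_{i=1}^{\infty} |U_i|^s < \mathcal{H}_\infty^s(F_u) + \varepsilon = \mathcal{H}^s(F_u) + \varepsilon.
\end{equation*}
Replacing each $U_i$ by its closure if necessary (noting diameter is preserved and closed sets are $\mathcal{H}^s$-measurable), each set $F_u \cap U_i$ is an $\mathcal{H}^s$-measurable subset of $F_u$, so by hypothesis $\mathcal{H}^s(F_u \cap U_i) \leq c |F_u \cap U_i|^s \leq c |U_i|^s$. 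Countable subadditivity then gives
\begin{equation*}
\mathcal{H}^s(F_u) \leq \sum_{i=1}^\infty \mathcal{H}^s(F_u \cap U_i) \leq c \sum_{i=1}^\infty |U_i|^s < c\bigl(\mathcal{H}^s(F_u) + \varepsilon\bigr).
\end{equation*}
Letting $\varepsilon \to 0$ forces $\mathcal{H}^s(F_u) \leq c\,\mathcal{H}^s(F_u)$, which since $0 < \mathcal{H}^s(F_u) < \infty$ by Theorem \ref{Theorem 2} contradicts $c < 1$. Hence the supremum must equal $1$.

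The only real obstacle is part (b); part (a) is just a restatement of the previous two lemmas. Within (b) the minor technical point to be careful about is that the cover $\{U_i\}$ in the definition of $\mathcal{H}_\infty^s$ is not assumed to consist of measurable sets, so one should pass to closures (or to Borel hulls) before invoking the hypothesised bound on $\mathcal{H}^s(F_u \cap U_i)$. Apart from that, the argument is a clean application of $s$-straightness and subadditivity.
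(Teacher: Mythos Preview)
Your proposal is correct and follows essentially the same argument as the paper. Part (a) is handled identically, and for part (b) the paper also uses $s$-straightness to pick a near-optimal unrestricted cover, passes to closures to ensure measurability, and then runs the same chain of inequalities $\mathcal{H}^s(F_u) \leq \sum \mathcal{H}^s(F_u\cap U_i) \leq \alpha \sum |U_i|^s < \alpha(\mathcal{H}^s(F_u)+\varepsilon)$; the only cosmetic difference is that the paper works directly with $\alpha = \sup\{\cdots\}$ rather than assuming $\alpha = c < 1$ for a contradiction.
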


\begin{proof}
(a) is an immediate consequence of Lemma \ref{2GlemD} and Lemma \ref{2GlemF}.

(b) Let $\alpha= \sup \bigl\{\frac{\mathcal{H}^s(A)}{\left|A\right|^s}  :  A \textup{ is }\mathcal{H}^s \textup{-measurable}, \, A\subset F_u\bigr\}$, then from part (a), $\alpha \leq 1$. It remains to show that $\alpha \geq 1$. 

Given $\varepsilon>0$ we can find a cover $\left\{U_i\right\}$ of $F_u$, such that $\sum_{i=1}^\infty\left|U_i\right|^s < \mathcal{H}_\infty^s(F_u) + \varepsilon = \mathcal{H}^s(F_u) + \varepsilon$, by Lemma \ref{2GlemF}. Each set $U_i$ is contained in a closed set of the same diameter, so we may assume that the cover consists of closed sets which are $\mathcal{H}^s$-measurable. Also $F_u\cap U_i$ is a Borel set and so is $\mathcal{H}^s$-measurable, for each $i\in \mathbb{N}$. As $F_u\subset \bigcup_{i=1}^\infty U_i$, we obtain,
\begin{equation*}
\mathcal{H}^s(F_u) \leq \sum_{i=1}^\infty \mathcal{H}^s\bigl(  F_u\cap U_i  \bigr) \leq   \sum_{i=1}^\infty \alpha \left| F_u\cap U_i \right|^s \leq \alpha \sum_{i=1}^\infty  \left| U_i \right|^s < \alpha \bigl( \mathcal{H}^s(F_u) + \varepsilon \bigr).
\end{equation*}
This argument holds for any $\varepsilon>0$, so we conclude that $\mathcal{H}^s(F_u)\leq \alpha\mathcal{H}^s(F_u)$, and this, as $0<\mathcal{H}^s(F_u)<+\infty$, implies that $\alpha\geq 1$.
\end{proof}

\section{The exact Hausdorff measure of attractors of a class of 2-vertex directed graph IFSs}\label{four}

There are few classes of sets for which the exact Hausdorff measure is known so the work of this section is of interest because, in Theorem \ref{2GthmN}, we give sufficient conditions for the calculation of the Hausdorff measure of both of the attractors of a class of $2$-vertex IFSs defined on $\mathbb{R}$ and illustrated in Figure \ref{P2Vertexb1}.
\begin{figure}[htb]
\begin{center}
\includegraphics[trim = 10mm 185mm 10mm 15mm, clip, scale =0.7]{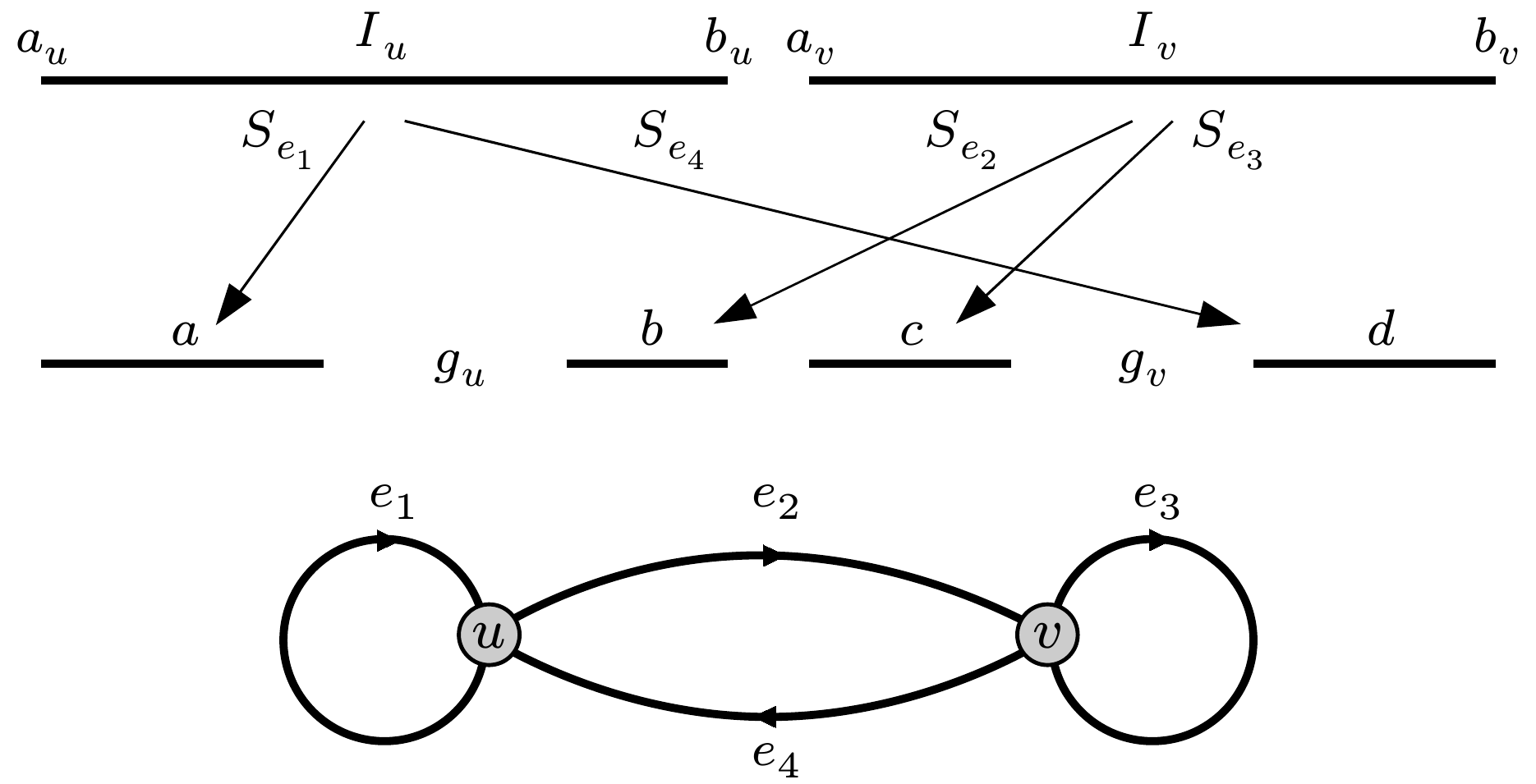}
\end{center}
\caption{A $2$-vertex directed graph IFS defined on $\mathbb{R}$, the similarities $S_{e_1},\, S_{e_2},\, S_{e_3}$ and $S_{e_4}$ do not reflect.}
\label{P2Vertexb1}
\end{figure}
We define $I_u$, $I_v$, as the smallest closed intervals containing the attractors $F_u$, $F_v$, so $C(F_u)=I_u$, $\left\{a_u, b_u\right\}\subset F_u \subset I_u=[a_u, b_u]$, with $\left|F_u\right|=\left|I_u\right|=b_u-a_u$, and similarly at the vertex $v$. We assume that all the similarities represented in diagrams in this paper preserve orientation, that is they do not involve reflections. This means that we may completely define directed graph IFSs by the use of diagrams. The strictly positive numbers, $a,\, g_u,\, b,\, c,\, g_v,\, d$, $\left|I_u\right|=a+g_u+b$, $\left|I_v\right|=c+g_v+d$, are as illustrated in Figure \ref{P2Vertexb1}, and $s=\dimH F_u=\dimH F_v$, denotes the Hausdorff dimension of the attractors. Since the gap lengths $g_u, \,g_v,$ are strictly positive the CSSC holds. The contracting similarity ratios of the similarities are given by
\begin{equation}
\label{r_{e_1}}
\begin{split}
&r_{e_1}=\frac{\left|S_{e_1}(I_u)\right|}{\left|I_u\right|}=\frac{a}{\left|I_u\right|}, \quad r_{e_2}=\frac{\left|S_{e_2}(I_v)\right|}{\left|I_v\right|}=\frac{b}{\left|I_v\right|}, \\
&r_{e_3}=\frac{\left|S_{e_3}(I_v)\right|}{\left|I_v\right|}=\frac{c}{\left|I_v\right|}, \quad r_{e_4}=\frac{\left|S_{e_4}(I_u)\right|}{\left|I_u\right|}=\frac{d}{\left|I_u\right|}.
\end{split}
\end{equation}
The similarities, $S_{e_i}: \mathbb{R} \to  \mathbb{R}$, $1 \leq i \leq 4$, are defined as
\begin{equation}
\label{S_{e_i}}
\begin{split}
&S_{e_1}(x)=r_{e_1}(x-a_u)+a_u, \quad S_{e_2}(x)=r_{e_2}(x-a_v)+a_u+a+g_u, \\
&S_{e_3}(x)=r_{e_3}(x-a_v)+a_v, \quad S_{e_4}(x)=r_{e_4}(x-a_u)+a_v+c+g_v,
\end{split}
\end{equation} 
as illustrated in Figure \ref{P2Vertexb1}.

The arguments we use in this section are based on those given by Ayer and Strichartz in \cite{Paper_Ayer_Strichartz} for $1$-vertex IFSs, particularly Lemmas 2.1, 3.1, 4.1 and Theorem 4.2 of that paper but the arguments for directed graph IFSs are much more involved. See also Theorem 7.1, \cite{Paper_Marion}. 

We reserve the letter $J$ to denote a closed interval in all that follows. The \emph{density of an interval} $J\subset I_u$, is defined as 
\begin{equation*}
d_u(J) = \frac{\mu_u(J)}{\left|J\right|^s}=\frac{\mathcal{H}^s( F_u\cap J )}{\mathcal{H}^s(F_u)\left|J\right|^s},
\end{equation*}
and for $J\subset I_v$, as
\begin{equation*}
d_v(J)=\frac{\mu_v(J)}{\left|J\right|^s}=\frac{\mathcal{H}^s( F_v\cap J )}{\mathcal{H}^s(F_v)\left|J\right|^s}.
\end{equation*}
The \emph{maximum density} for the intervals of $F_u$ is the number $\sup \bigl\{d_u(J)  :  J\subset I_u\bigr\}$, and for the intervals of $F_v$ is $\sup \bigl\{d_v(J)  :  J\subset I_v\bigr\}$.

We now prove a series of technical lemmas which lead up to Theorem \ref{2GthmN}, starting with an immediate consequence of Corollary \ref{2GcorG} of the preceding section.
 
\begin{lem}
\label{2GlemH}
For the $2$-vertex IFS of Figure \ref{P2Vertexb1}, 
\begin{equation*}
\sup \bigl\{d_u(J)  :  J\subset I_u\bigr\}=\frac{1}{\mathcal{H}^s(F_u)}\geq \frac{1}{\left|I_u\right|^s}, \quad \sup \bigl\{d_v(J)  :  J\subset I_v\bigr\}=\frac{1}{\mathcal{H}^s(F_v)}\geq \frac{1}{\left|I_v\right|^s}.
\end{equation*}
\end{lem}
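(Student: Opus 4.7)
The plan is to show that $\sup_J d_u(J) = 1/\mathcal{H}^s(F_u)$ by reducing the problem to Corollary \ref{2GcorG}, where the supremum is taken over arbitrary $\mathcal{H}^s$-measurable subsets of $F_u$. Unpacking the definition, $d_u(J)=\mathcal{H}^s(F_u\cap J)/\bigl(\mathcal{H}^s(F_u)|J|^s\bigr)$, so it suffices to establish that
\begin{equation*}
\sup\Bigl\{\tfrac{\mathcal{H}^s(F_u\cap J)}{|J|^s}: J\subset I_u \text{ a closed interval}\Bigr\}=1,
\end{equation*}
and then read off the bound $\geq 1/|I_u|^s$ from a single explicit choice of $J$.

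For the upper bound, for any closed interval $J\subset I_u$ the set $A=F_u\cap J$ is Borel and hence $\mathcal{H}^s$-measurable, and it is contained in $F_u$. By part (a) of Corollary \ref{2GcorG}, $\mathcal{H}^s(F_u\cap J)\leq |F_u\cap J|^s\leq |J|^s$, so the quantity above is at most $1$. For the lower bound, I would convert an arbitrary near-optimal subset of $F_u$ into a near-optimal interval by taking the convex hull: given any $\mathcal{H}^s$-measurable $A\subset F_u$, put $J=C(A)=[\inf A,\sup A]$. Since $A\subset F_u\subset I_u$ and $I_u$ is convex, $J\subset I_u$; moreover $|J|=|A|$ and $A\subset F_u\cap J$, so $\mathcal{H}^s(F_u\cap J)/|J|^s\geq \mathcal{H}^s(A)/|A|^s$. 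Taking the supremum over such $A$ and invoking part (b) of Corollary \ref{2GcorG} gives the required lower bound of $1$. Combined, this proves $\sup_J d_u(J)=1/\mathcal{H}^s(F_u)$.

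To obtain the inequality $1/\mathcal{H}^s(F_u)\geq 1/|I_u|^s$ I would simply evaluate $d_u$ at $J=I_u$: since $F_u\subset I_u$, one has $\mu_u(I_u)=1$ and hence $d_u(I_u)=1/|I_u|^s$, so in particular this is a lower bound for the supremum; equivalently, applying Corollary \ref{2GcorG}(a) to the measurable set $A=F_u$ itself yields $\mathcal{H}^s(F_u)\leq |F_u|^s\leq |I_u|^s$. The argument at vertex $v$ is identical, with $I_v$, $F_v$, $\mu_v$ replacing their counterparts at $u$.

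I do not anticipate a serious obstacle: once one reconciles intervals $J\subset I_u$ with measurable subsets $A\subset F_u$ via the convex hull trick, both halves of the assertion follow directly from Corollary \ref{2GcorG}. The only mild subtlety is ensuring that the convex hull lies inside $I_u$, which is immediate from $I_u=C(F_u)$ being convex.
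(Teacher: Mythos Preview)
Your proposal is correct and is essentially the argument the paper has in mind: the paper gives no proof beyond declaring the lemma an ``immediate consequence of Corollary~\ref{2GcorG}'', and your convex-hull passage between measurable subsets $A\subset F_u$ and closed intervals $J\subset I_u$ is precisely how one unpacks that immediacy. The inequality $\mathcal{H}^s(F_u)\leq |I_u|^s$ then drops out either from evaluating at $J=I_u$ or from Corollary~\ref{2GcorG}(a) with $A=F_u$, as you note.
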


In Lemma \ref{2GlemI} we collect together some useful densities for future reference. We use the eigenvector notation established in Section \ref{three}, with $\bh = ( h_u, h_v )^T = ( \mathcal{H}^s(F_u), \mathcal{H}^s(F_v) )^T$. 
\begin{lem}
\label{2GlemI}
For the $2$-vertex IFS of Figure \ref{P2Vertexb1}, 
\begin{align*}
&\textup{(a)} \ d_u(I_u)=d_u(S_{e_1}(I_u))=\frac{1}{\left|I_u\right|^s},  &&\textup{(b)} \ d_u(S_{e_2}(I_v))=\frac{h_v}{h_u}\frac{1}{\left|I_v\right|^s},\\
&\textup{(c)} \ d_v(I_v)=d_v(S_{e_3}(I_v))=\frac{1}{\left|I_v\right|^s},  &&\textup{(d)} \ d_v(S_{e_4}(I_u))=\frac{h_u}{h_v}\frac{1}{\left|I_u\right|^s},\\
&\textup{(e)} \ J\subset S_{e_1}(I_u), \, d_u(S_{e_1}^{-1}(J))=d_u(J),  &&\textup{(f)} \ J\subset S_{e_2}(I_v), \, \frac{h_v}{h_u}d_v(S_{e_2}^{-1}(J))=d_u(J),\\
&\textup{(g)} \ J\subset S_{e_3}(I_v), \, d_v(S_{e_3}^{-1}(J))=d_v(J),  &&\textup{(h)} \ J\subset S_{e_4}(I_u), \, \frac{h_u}{h_v}d_u(S_{e_4}^{-1}(J))=d_v(J).
\end{align*}
\end{lem}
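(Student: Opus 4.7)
The plan is to verify all eight identities by unwinding the definition of density and exploiting two facts: the CSSC forces a clean decomposition of each attractor into disjoint similar copies, and contracting similarities scale Hausdorff measure and diameter by $r_e^s$ and $r_e$ respectively. Throughout I will use $h_u=\mathcal{H}^s(F_u)$, $h_v=\mathcal{H}^s(F_v)$ (Lemma \ref{2GlemB}).

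First I would establish the decomposition that underlies every part. The IFS equation gives $F_u=S_{e_1}(F_u)\cup S_{e_2}(F_v)$ and $F_v=S_{e_3}(F_v)\cup S_{e_4}(F_u)$. Since the gap lengths $g_u,g_v$ are strictly positive, the CSSC holds, so $S_{e_1}(I_u)\cap S_{e_2}(I_v)=\emptyset$ and $S_{e_3}(I_v)\cap S_{e_4}(I_u)=\emptyset$. Because $S_{e_1}(F_u)\subset S_{e_1}(I_u)$ and $S_{e_2}(F_v)\subset S_{e_2}(I_v)$, this yields the identities
\begin{equation*}
F_u\cap S_{e_1}(I_u)=S_{e_1}(F_u),\quad F_u\cap S_{e_2}(I_v)=S_{e_2}(F_v),
\end{equation*}
and the analogous pair at vertex $v$. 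Moreover, for any $J\subset S_{e_1}(I_u)$, $F_u\cap J = S_{e_1}(F_u)\cap J = S_{e_1}(F_u\cap S_{e_1}^{-1}(J))$, so taking $\mathcal{H}^s$ and using the similarity scaling, $\mathcal{H}^s(F_u\cap J)=r_{e_1}^s\,\mathcal{H}^s(F_u\cap S_{e_1}^{-1}(J))$. The same argument applied with $S_{e_2},S_{e_3},S_{e_4}$ gives the analogous identities needed for parts (e)--(h).

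For (a), $d_u(I_u)=h_u/(h_u|I_u|^s)=|I_u|^{-s}$. For $d_u(S_{e_1}(I_u))$, combine the decomposition identity with $|S_{e_1}(I_u)|=r_{e_1}|I_u|$ and $\mathcal{H}^s(S_{e_1}(F_u))=r_{e_1}^sh_u$, which cancel to leave $|I_u|^{-s}$. Part (b) is similar: $\mathcal{H}^s(F_u\cap S_{e_2}(I_v))=r_{e_2}^sh_v$ while $|S_{e_2}(I_v)|^s=r_{e_2}^s|I_v|^s$, so the $r_{e_2}^s$'s cancel and one factor of $h_v/h_u$ survives from the normalisation. Parts (c) and (d) are obtained by interchanging the roles of $u$ and $v$ and of $e_1,e_2$ with $e_3,e_4$.

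For (e)--(h) I would plug the scaling identity $\mathcal{H}^s(F_u\cap J)=r_{e_i}^s\mathcal{H}^s(F_{t(e_i)}\cap S_{e_i}^{-1}(J))$ together with $|J|^s=r_{e_i}^s|S_{e_i}^{-1}(J)|^s$ straight into the density definition; the $r_{e_i}^s$ cancel and one is left either with exactly $d_u$ or $d_v$ on the right (when the normalising constant $h_u$ or $h_v$ already matches) or with the extra multiplicative factor $h_v/h_u$ or $h_u/h_v$ when the intersected attractor differs from the containing one. There is no real obstacle here; the content is purely bookkeeping, with the only subtle step being the systematic use of the CSSC to rewrite $F_u\cap J$ as a similarity image of $F_u\cap S_{e_i}^{-1}(J)$ or $F_v\cap S_{e_i}^{-1}(J)$ as appropriate.
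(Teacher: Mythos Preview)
Your proposal is correct and follows essentially the same approach as the paper: the paper proves only part (h) as a representative case and states the others are similar, while you outline all eight parts, but both arguments amount to using the CSSC to identify $F_{i(e)}\cap J$ with $S_e(F_{t(e)}\cap S_e^{-1}(J))$ and then cancelling the $r_e^s$ factors coming from the similarity scaling of $\mathcal{H}^s$ and of diameter. The only minor slip is notational: in your general scaling identity for parts (e)--(h) the left-hand side should read $\mathcal{H}^s(F_{i(e_i)}\cap J)$ rather than $\mathcal{H}^s(F_u\cap J)$, since for $e_3,e_4$ the ambient attractor is $F_v$.
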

\begin{proof}
We prove (h), the other parts can be proved in much the same way.
\begin{align*}
\frac{h_u}{h_v}d_u(S_{e_4}^{-1}(J))&=\frac{\mathcal{H}^s(F_u)}{\mathcal{H}^s(F_v)}\frac{\mathcal{H}^s\bigl( F_u\cap S_{e_4}^{-1}(J) \bigr)}{\mathcal{H}^s(F_u)\left|S_{e_4}^{-1}(J)\right|^s}=\frac{\mathcal{H}^s\bigl( S_{e_4}^{-1}(S_{e_4}(F_u)\cap J)\bigr) }{\mathcal{H}^s(F_v)\left|S_{e_4}^{-1}(J)\right|^s} \\
&=\frac{r_{e_4}^{-s}\mathcal{H}^s\bigl( S_{e_4}(F_u)\cap J)\bigr) }{\mathcal{H}^s(F_v)r_{e_4}^{-s}\left|J\right|^s}=\frac{\mathcal{H}^s( F_v\cap J)) }{\mathcal{H}^s(F_v)\left|J\right|^s}=d_v(J).
\qedhere 
\end{align*}
\end{proof}
The value of $\frac{h_v}{h_u}$ can be calculated using Lemma \ref{2GlemB}, which states that  
\begin{equation}
\label{matrixh_uh_v}
\left(\begin{array}{cc} 
r_{e_1}^s & r_{e_2}^s \\
r_{e_4}^s & r_{e_3}^s 
\end{array}\right)
\left(\begin{array}{c} 
h_u  \\
h_v \\
\end{array}\right)=
\left(\begin{array}{c} 
h_u  \\
h_v \\
\end{array}\right)
\end{equation} 
and this implies 
\begin{equation}
\label{h_vh_u}
\frac{h_v}{h_u}=\frac{1-r_{e_1}^s}{r_{e_2}^s}.
\end{equation}

In Lemma \ref{2GlemK} there is a good reason for the choice of functions $f_u$ and $f_v$. If we were instead to use $l_u(x,y)=\frac{x^s + y^s}{(x + g_u + y)^s}$ and $l_v(x,y)=\frac{x^s + y^s}{(x + g_v + y)^s}$ then, in order to obtain $l_u(a,b),l_v(c,d) \leq 1$, we would require $\frac{h_u\left|I_v\right|^s}{h_v\left|I_u\right|^s}=1$ and this is a much more restrictive condition than $(1)$. Also it is not obvious how such a condition could be checked.

\begin{lem}
\label{2GlemK} For the $2$-vertex IFS of Figure \ref{P2Vertexb1}, let 
\begin{align*}
P&=\left\{(x,y)  :  0\leq x \leq a, \, 0\leq y \leq b\right\} \setminus \left\{(a,b)\right\}, \\
Q&=\left\{(x,y)  :  0\leq x \leq c, \, 0\leq y \leq d\right\} \setminus \left\{(c,d)\right\}, \\
f_u(x,y)&=\frac{x^s + \frac{h_v}{h_u}y^s}{(x + g_u + y)^s},  \quad \textrm{and} \quad f_v(x,y)=\frac{x^s + \frac{h_u}{h_v}y^s}{(x + g_v + y)^s}.
\end{align*}

Suppose the following three conditions hold,
\begin{equation*}
\textup{(1)}  \quad \left|I_u\right|=\left|I_v\right|, \quad \textup{(2)}  \quad \frac{h_v}{h_u} \leq 1, \quad \textup{(3)}  \quad \frac{(a+g_u)(\left|I_u\right|^s-a^s)}{ba^s}\geq 1,
\end{equation*}
then
\begin{align*}
&\textup{(a)} \quad f_u(a,b)=f_v(c,d)=1, \\
&\textup{(b)} \quad f_u(x,y)<1, \textrm{for all } (x,y) \in P, \\
&\textup{(c)} \quad f_v(x,y)<1, \textrm{for all } (x,y) \in Q.
\end{align*}
\end{lem}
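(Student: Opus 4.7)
The plan is to treat $f_u$ and $f_v$ as continuous functions on the compact rectangles $R_u=[0,a]\times[0,b]$ and $R_v=[0,c]\times[0,d]$ (continuity holds because $g_u,g_v>0$ prevent the denominators from vanishing), locate where each attains its maximum, and verify that the maximum equals $1$ and is attained only at the designated corner. For part (a) I would use Equation~(\ref{h_vh_u}) and condition~(1) to compute $(h_v/h_u)b^s=(1-r_{e_1}^s)b^s/r_{e_2}^s=|I_u|^s-a^s$, so that $f_u(a,b)=1$; the second row of the eigenvector system~(\ref{matrixh_uh_v}) gives $(h_u/h_v)d^s=|I_v|^s-c^s$ in the same way, so $f_v(c,d)=1$.

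For (b), the first step is to rule out interior critical points of $f_u$. Setting the two partials to zero on $(0,a)\times(0,b)$ and eliminating $h_v/h_u$ by multiplying the equations reduces, after a short manipulation, to $g_u(x+g_u+y)=0$, which is impossible since $g_u>0$. The maximum therefore lies on the boundary, and I would check the four edges in turn. On $y=0$ and $x=0$, $f_u$ is strictly increasing in the free variable and the endpoint value is $<1$ (the case $x=0$ uses the strict subadditivity $|I_u|^s<a^s+(|I_u|-a)^s$, valid for $s<1$ since $a,|I_u|-a>0$). On $y=b$, after using $(h_v/h_u)b^s=|I_u|^s-a^s$, the sign of the $x$-derivative is that of $x^{s-1}(g_u+b)-(|I_u|^s-a^s)$, which is decreasing in $x$ and nonnegative at $x=a$ iff $|I_u|\geq a$, clearly true. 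Finally, on $x=a$, the sign of the $y$-derivative of $f_u(a,y)$ is that of $(h_v/h_u)y^{s-1}(a+g_u)-a^s$, again decreasing in $y$; requiring it to be nonnegative at $y=b$ reduces, via $(h_v/h_u)b^s=|I_u|^s-a^s$, precisely to condition~(3). Thus $f_u\leq 1$ on $R_u$ with equality only at $(a,b)$.

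For (c) the strategy is parallel, but the natural analog of condition~(3) for $R_v$, namely $(c+g_v)(|I_v|^s-c^s)\geq d c^s$, is not a hypothesis and must be deduced. I would obtain it from (1) and (2) as follows. Condition~(2) is equivalent to $|I_v|^s\geq c^s+d^s$, giving $|I_v|^s-c^s\geq d^s$ and $|I_v|\geq(c^s+d^s)^{1/s}$. Since $1/s\geq 1$, the elementary inequality $(u+1)^{1/s}\geq u+1$ applied with $u=(c/d)^s\geq 0$ yields $(c^s+d^s)^{1/s}\geq c^s d^{1-s}+d$, so $c+g_v=|I_v|-d\geq c^s d^{1-s}$. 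Multiplying this by $|I_v|^s-c^s\geq d^s$ produces the required analog of~(3). With that inequality secured, the no-interior-critical-point argument and the four-edge analysis for $f_v$ on $R_v$ go through exactly as in (b).

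The main obstacle is recognizing the asymmetry engineered by the author's choice of hypotheses: condition~(2) forces $|I_u|^s\leq a^s+b^s$ on the $u$-side, which is the ``wrong'' direction to obtain the $u$-analog of~(3) for free, so condition~(3) must be assumed separately; but it simultaneously forces $|I_v|^s\geq c^s+d^s$ on the $v$-side, which via the elementary inequality above is exactly what makes the $v$-analog of~(3) automatic. Spotting this auxiliary inequality and chaining it correctly with the bounds from~(2) is the non-obvious step of the proof.
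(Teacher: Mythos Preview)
Your argument is correct and follows the same route as the paper: the authors likewise reduce parts (b) and (c) to a calculus analysis of $f_u$ and $f_v$ on the rectangles, sketching only the edge $x=a$ for $f_u$ (their computation of $y_{\max}/b$ is exactly your derivative condition on that edge) and deferring the remaining cases to the thesis. Your explicit derivation that conditions (1) and (2) force the $v$-analogue of (3), via $(c^s+d^s)^{1/s}\geq c^sd^{1-s}+d$, is precisely the ingredient the paper suppresses in its sketch of part (c).
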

\begin{proof}(a) From the definition of $f_u$,
\begin{align*}
f_u(a,b) &=\frac{a^s + \frac{h_v}{h_u}b^s}{(a + g_u + b)^s} = \frac{\left|I_u\right|^s r_{e_1}^s + \frac{h_v}{h_u}\left|I_v\right|^s r_{e_2}^s}{\left|I_u\right|^s} && (\textrm{by (\ref{r_{e_1}})})\\ 
&=\frac{1}{h_u} \biggl(r_{e_1}^sh_u + \frac{\left|I_v\right|^s}{\left|I_u\right|^s}r_{e_2}^sh_v\biggr) = \frac{1}{h_u}\bigl(r_{e_1}^sh_u + r_{e_2}^sh_v\bigr) && (\textrm{by (1)}) \\
&=1 && (\textrm{by (\ref{matrixh_uh_v})}).  
\end{align*} 
In the same way it can be shown that $f_v(c,d)=1$.

Parts (b) and (c) can be verified using calculus. To give a rough idea of the type of argument involved, let $y_{\textrm{max}}$ be the point at which the maximum value of $f_u(a,y)$ occurs. It can be shown that $\frac{y_{\textrm{max}}}{b}=\bigl(\frac{(a+g_u)(\left|I_u\right|^s-a^s)}{ba^s}\bigr)^\frac{1}{1-s}$, so if $(3)$ holds $\frac{y_{\textrm{max}}}{b} \geq 1$. As $f_u(a,y)$ strictly increases up to $y_{\textrm{max}}$ and $f_u(a,b)=1$, it follows that $f_u(a,y)<1$ for all $(a,y) \in P$. See Lemma 3.4.4 \cite{phdthesis_Boore}. 
\end{proof}

\begin{figure}[htb]
\begin{center}
\includegraphics[trim = 10mm 200mm 10mm 15mm, clip, scale =0.7]{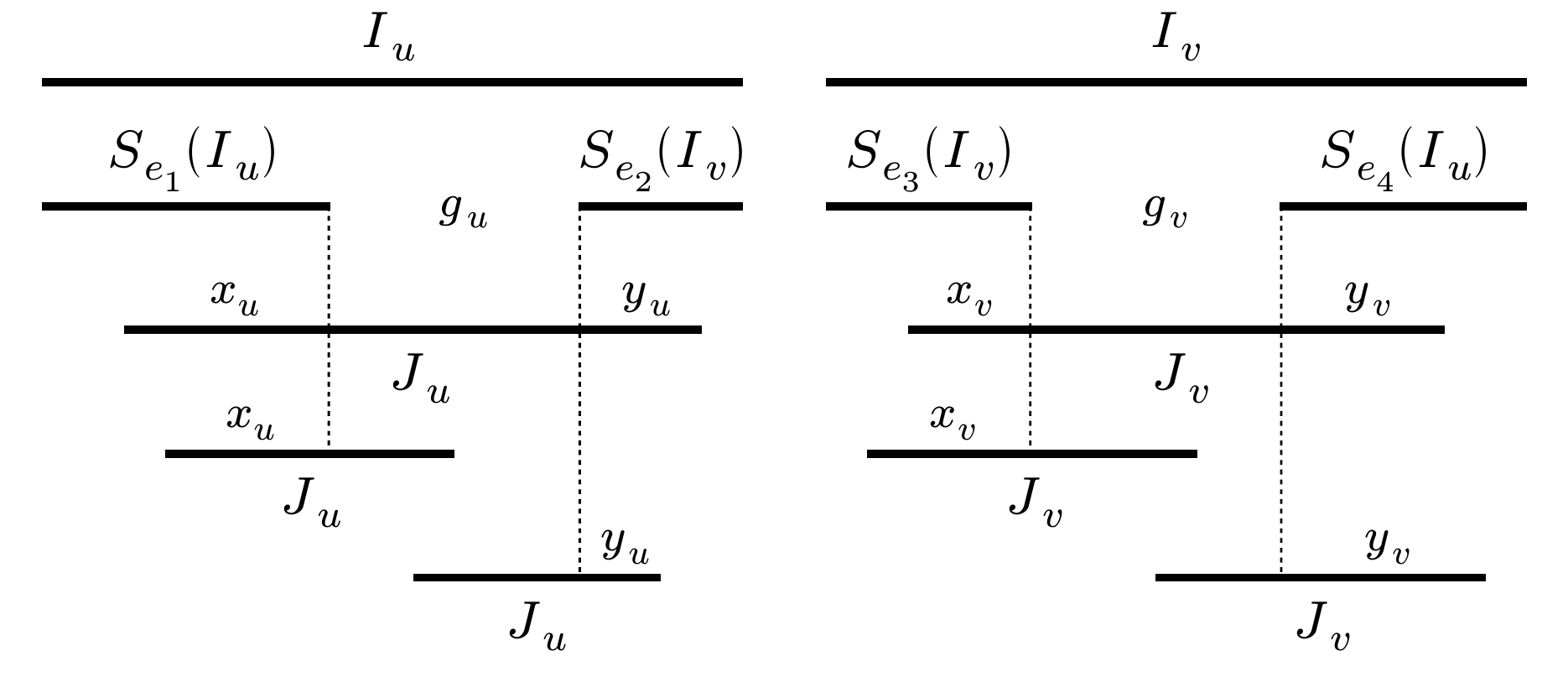}
\end{center}
\caption{The intervals $J_u$ and $J_v$.}
\label{2GJuJv}
\end{figure}

The next two lemmas give important results which we will apply in the proof of Theorem \ref{2GthmN} which follows immediately after.

\begin{lem}
\label{2GlemL} 
For the $2$-vertex IFS of Figure \ref{P2Vertexb1}, let $J_u\subset I_u$ be an interval which is not contained in the level-$1$ intervals $S_{e_1}(I_u)$, $S_{e_2}(I_v)$, with $d_u(J_u)>0$ and let $J_v\subset I_v$, be an interval which is not contained in the level-$1$ intervals $S_{e_3}(I_v)$, $S_{e_4}(I_u)$, with $d_v(J_v)>0$, as illustrated in Figure \ref{2GJuJv}. Suppose also that the conditions of Lemma \ref{2GlemK} hold.

\textup{(a)} If $J_u \neq I_u$ then $d_u(J_u)<\max\left\{\, d_u(S_{e_1}^{-1}(J_u)), \ d_v(S_{e_2}^{-1}(J_u))\, \right\}$. 

\textup{(b)} If $J_v \neq I_v$ then $d_v(J_v)<\max\left\{\, d_v(S_{e_3}^{-1}(J_v)), \ d_u(S_{e_4}^{-1}(J_v))\, \right\}$.
\end{lem}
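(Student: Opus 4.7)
The plan is to partition $J_u$ by its intersection with the two level-$1$ intervals, rewrite $\mu_u(J_u)$ as a sum over these pieces using the self-similar identity (\ref{2Gself_similar_measure_2}), and then bound the ratio $d_u(J_u)=\mu_u(J_u)/|J_u|^s$ using the strict inequality $f_u<1$ from Lemma \ref{2GlemK}(b). Part (b) proceeds in the same way, swapping $u$ and $v$ and invoking Lemma \ref{2GlemK}(c) in place of (b).

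First I would set $J_1:=J_u\cap S_{e_1}(I_u)$, $J_2:=J_u\cap S_{e_2}(I_v)$, $x:=|J_1|$, $y:=|J_2|$. The CSSC gives the disjoint decomposition $F_u=S_{e_1}(F_u)\cup S_{e_2}(F_v)$, so $\mu_u(J_u)=\mu_u(J_1)+\mu_u(J_2)$. Writing $A:=d_u(S_{e_1}^{-1}(J_u))$ and $B:=d_v(S_{e_2}^{-1}(J_u))$, with each preimage taken on the subinterval of $J_u$ that actually lies in the relevant image, parts (e) and (f) of Lemma \ref{2GlemI} convert these into $\mu_u(J_1)=x^s A$ and $\mu_u(J_2)=(h_v/h_u)\,y^s B$.

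Setting $M:=\max\{A,B\}$, three geometric cases arise. If $J_u$ straddles the gap and meets both pieces, then $|J_u|=x+g_u+y$ with $(x,y)\in P$ (because $J_u\ne I_u$), so
\begin{equation*}
d_u(J_u)=\frac{x^s A+(h_v/h_u)y^s B}{(x+g_u+y)^s}\leq M\cdot f_u(x,y)<M
\end{equation*}
by Lemma \ref{2GlemK}(b). If instead $J_u$ meets only one level-$1$ interval and spills into the gap, either $y=0$ and $|J_u|=x+t$ with $0<t\leq g_u$, giving $d_u(J_u)=x^s A/(x+t)^s<A\leq M$ at once, or $x=0$ and $|J_u|=t+y$, in which case condition (2) of Lemma \ref{2GlemK}, namely $h_v/h_u\leq 1$, combined with $t>0$, gives $d_u(J_u)=(h_v/h_u)(y/(t+y))^s B<B\leq M$. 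This establishes (a).

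The step I expect to be the main obstacle is the analogous one-sided spillover subcase in part (b): if $J_v$ meets only $S_{e_4}(I_u)$ and leaks left into the $v$-gap, the analogous computation produces the reciprocal prefactor $h_u/h_v$, which by (2) is $\geq 1$ instead of $\leq 1$. The naive one-variable estimate then only yields $d_v(J_v)<(h_u/h_v)B$, which is weaker than the required $d_v(J_v)<B$. Handling this will require going beyond the one-variable reduction and appealing directly to the strict inequality $f_v(0,y)<1$ from Lemma \ref{2GlemK}(c), packaged together with the geometric constraint $t\leq g_v$ on how far $J_v$ can leak into the gap. This is where the full strength of all three hypotheses (1)--(3) of Lemma \ref{2GlemK} (and not just (2) alone) is essential, and it is the delicate point of the argument.
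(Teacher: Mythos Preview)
Your two-sided argument is exactly the paper's: split $\mu_u(J_u)$ over the two level-$1$ pieces, use Lemma~\ref{2GlemI}(e),(f) to rewrite each piece as a scaled preimage density, and bound by $f_u(x_u,y_u)\cdot M<M$ via Lemma~\ref{2GlemK}(b). Your handling of the one-sided cases in part~(a) is also fine and fills in a case the paper passes over (its displayed denominator $(x_u+g_u+y_u)^s$ tacitly assumes $J_u$ meets both pieces).

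The problem is your proposed rescue of the one-sided subcase in~(b). If $J_v$ meets only $S_{e_4}(I_u)$, set $y_v=|J_v\cap S_{e_4}(I_u)|$ and let $t=|J_v|-y_v$ with $0<t<g_v$. Then your own computation gives
\[
d_v(J_v)=\frac{h_u}{h_v}\,\frac{y_v^{\,s}}{(t+y_v)^s}\,d_u\bigl(S_{e_4}^{-1}(J_v)\cap I_u\bigr),
\]
so what is needed is $\dfrac{h_u}{h_v}\,\dfrac{y_v^{\,s}}{(t+y_v)^s}<1$. The inequality $f_v(0,y_v)<1$ from Lemma~\ref{2GlemK}(c) only yields $\dfrac{h_u}{h_v}\,y_v^{\,s}<(g_v+y_v)^s$, and since $t<g_v$ we have $(t+y_v)^s<(g_v+y_v)^s$; the inequalities point the wrong way, so $f_v(0,y_v)<1$ together with $t\leq g_v$ cannot deliver the required bound. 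Indeed, as $t\to 0^+$ the factor tends to $h_u/h_v\geq 1$, so the strict inequality you need can fail. Conditions~(1)--(3) give no further control on the denominator $(t+y_v)^s$ for $t<g_v$.

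The resolution is not analytic but interpretive: the paper's statement and proof (note the phrase ``as illustrated in Figure~\ref{2GJuJv}'' and the equality $|J_u|^s=(x_u+g_u+y_u)^s$) treat only intervals that meet \emph{both} level-$1$ pieces, and every application of the lemma in Lemma~\ref{2GlemM} and Theorem~\ref{2GthmN} is to such intervals. Your two-sided argument already covers that case; the ``delicate point'' you anticipate is not a case the lemma is meant to handle.
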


\begin{proof} The lengths $x_u,\,y_u,\,x_v,\,y_v$, illustrated in Figure \ref{2GJuJv}, are defined as
\begin{equation*}
x_u=\left|S_{e_1}(I_u)\cap J_u\right|, \ y_u=\left|S_{e_2}(I_v)\cap J_u\right|, \ x_v=\left|S_{e_3}(I_v)\cap J_v\right|, \ y_v=\left|S_{e_4}(I_u)\cap J_v\right|,
\end{equation*}
and for convenience we put $0=\left| \, \emptyset \, \right|$, and also take the densities of the empty interval to be zero, that is $d_u(\emptyset)=d_v( \emptyset)=0$. As we are assuming $d_u(J_u)>0$, at least one of $x_u$ or $y_u$ will be strictly positive, and similarly for $x_v$ and $y_v$. 
\begin{align*}
\textrm{(a)} \quad d_u(J_u)&= \frac{\mathcal{H}^s( F_u\cap J_u )}{\mathcal{H}^s(F_u)\left|J_u\right|^s} \\
&= \frac{\mathcal{H}^s( F_u\cap (S_{e_1}(I_u)\cap J_u)) + \mathcal{H}^s( F_u\cap (S_{e_2}(I_v)\cap J_u))}{\mathcal{H}^s(F_u)\left|J_u\right|^s} \\
&= \frac{\left|S_{e_1}(I_u)\cap J_u\right|^sd_u(S_{e_1}(I_u)\cap J_u) + \left|S_{e_2}(I_v)\cap J_u\right|^sd_u(S_{e_2}(I_v)\cap J_u)}{\left|J_u\right|^s} \\
&= \frac{x_u^sd_u(S_{e_1}(I_u)\cap J_u) + y_u^sd_u(S_{e_2}(I_v)\cap J_u)}{(x_u+g_u+y_u)^s}.  
\end{align*}
Applying Lemma \ref{2GlemI}(e), (f), and Lemma \ref{2GlemK}(b), we obtain, 
\begin{align*}
d_u(J_u)&= \frac{x_u^sd_u(I_u\cap S_{e_1}^{-1}(J_u)) + \frac{h_v}{h_u}y_u^sd_v(I_v\cap S_{e_2}^{-1}(J_u))}{(x_u+g_u+y_u)^s} \\
&= \frac{x_u^sd_u(S_{e_1}^{-1}(J_u)) + \frac{h_v}{h_u}y_u^sd_v(S_{e_2}^{-1}(J_u))}{(x_u+g_u+y_u)^s} \\
&\leq \biggl( \frac{x_u^s + \frac{h_v}{h_u}y_u^s}{(x_u+g_u+y_u)^s} \biggr)\max\left\{\, d_u(S_{e_1}^{-1}(J_u)), \ d_v(S_{e_2}^{-1}(J_u))\, \right\} \\
&= f_u(x_u,y_u)\max\left\{\, d_u(S_{e_1}^{-1}(J_u)), \ d_v(S_{e_2}^{-1}(J_u))\, \right\} \\
&< \max\left\{\, d_u(S_{e_1}^{-1}(J_u)), \ d_v(S_{e_2}^{-1}(J_u))\, \right\}. 
\end{align*}
The proof of part (b) is similar to that given in part (a), applying instead Lemma \ref{2GlemI}(g), (h), and Lemma \ref{2GlemK}(c). See Lemma 3.4.5 \cite{phdthesis_Boore}.
\end{proof}

We now consider $\sup \bigl\{d_u(J)  :  S_{e_1}(I_u)\subset J \subset I_u\bigr\}$. As shown in Figure \ref{P2Vertexb1}, $I_u=[a_u, b_u]$, and $S_{e_1}(I_u)=[a_u, a_u+a]$, so
\begin{equation*}
\sup \bigl\{d_u(J)  :  S_{e_1}(I_u)\subset J\subset I_u\bigr\}=\sup\left\{\frac{\mathcal{H}^s(F_u\cap[a_u,x])}{\mathcal{H}^s(F_u)(x-a_u)^s}  :  x \in [a_u+a, b_u]\right\}.
\end{equation*}
The function $\frac{\mathcal{H}^s( F_u\cap [a_u,  x])}{\mathcal{H}^s(F_u)(x-a_u)^s}$ is a continuous function of $x$ on the compact interval $[a_u+a, b_u]$, where $a>0$, so it is bounded and attains its bound for at least one $x_0 \in [a_u+a,  b_u]$. For the largest such $x_0$, we may define an interval $L_u=[a_u,  x_0]$,  $S_{e_1}(I_u)\subset L_u \subset I_u$, which satisfies, 
\begin{equation}
\label{L_u}
d_u(L_u)=\sup \bigl\{d_u(J) : S_{e_1}(I_u)\subset J \subset I_u\bigr\}.
\end{equation}
Similarly intervals $L_v, \, R_u, \, R_v$, exist for which the following equations hold,
\begin{align}
\label{L_v}
&d_v(L_v)=\sup \bigl\{d_v(J)  :  S_{e_3}(I_v)\subset J \subset I_v\bigr\}, \\
\label{R_u}
&d_u(R_u)=\sup \bigl\{d_u(J)  :  S_{e_2}(I_v)\subset J \subset I_u\bigr\}, \\
\label{R_v}
&d_v(R_v)=\sup \bigl\{d_u(J)  :  S_{e_4}(I_u)\subset J \subset I_v\bigr\}. 
\end{align}
Some possible candidates for $L_u, \, L_v, \, R_u, \, R_v$ are illustrated in Figure \ref{2GLuRuLvRv}. 
\begin{figure}[htb]
\begin{center}
\includegraphics[trim = 10mm 225mm 10mm 15mm, clip, scale =0.7]{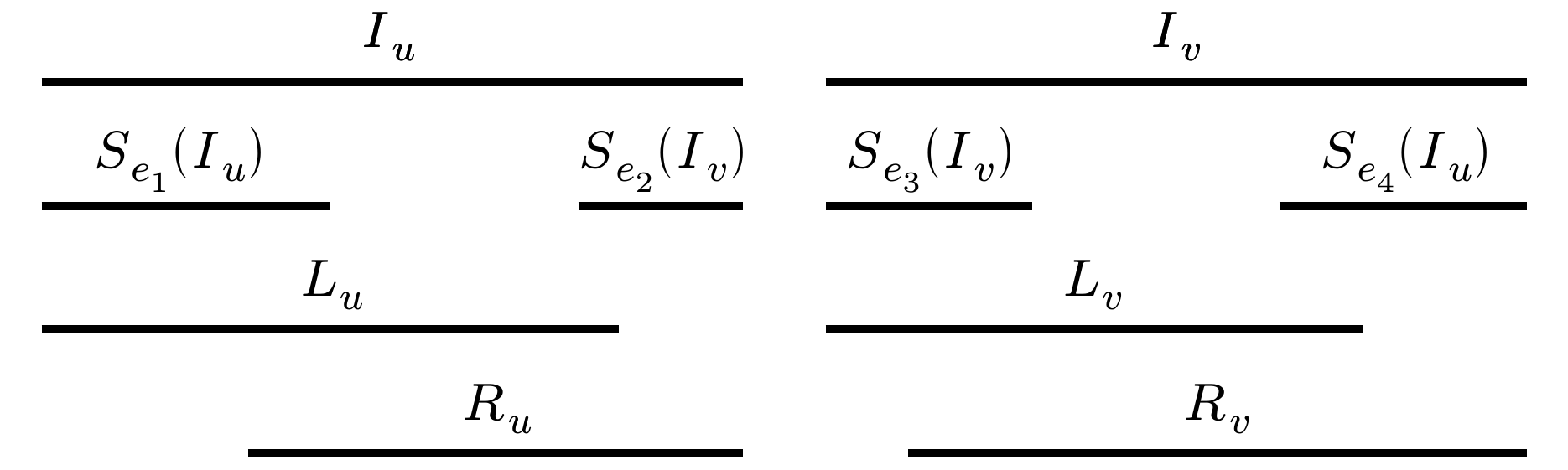}
\end{center}
\caption{Some possibilities for the intervals $L_u, \, L_v, \, R_u$, and $R_v$.}
\label{2GLuRuLvRv}
\end{figure}

\begin{lem}
\label{2GlemM}
For the $2$-vertex IFS of Figure \ref{P2Vertexb1}, let the intervals $L_u, \, L_v, \, R_u$, and $R_v$ be as defined in Equations (\ref{L_u}), (\ref{L_v}), (\ref{R_u}), and (\ref{R_v}), and suppose the conditions of Lemma \ref{2GlemK} hold. 

Then
\begin{align*}
&\textup{(a)} \ d_u(L_u)=\frac{1}{\left|I_u\right|^s},   \quad \quad \quad \quad \textup{(b)} \ d_v(L_v)=\frac{1}{\left|I_u\right|^s}, \\
&\textup{(c)} \ d_u(R_u)=\frac{1}{\left|I_u\right|^s},   \quad \quad \quad \quad \textup{(d)} \ d_v(R_v)=\frac{h_u}{h_v}\frac{1}{\left|I_u\right|^s}. 
\end{align*}
\end{lem}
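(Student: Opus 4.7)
The plan is to establish the four equalities by combining explicit lower bounds with coupled strict upper bounds derived via Lemma \ref{2GlemL} and the identities of Lemma \ref{2GlemI}. The $\geq$ direction is immediate: $I_u$ is both an $L_u$- and $R_u$-candidate, giving $d_u(L_u), d_u(R_u) \geq d_u(I_u) = 1/|I_u|^s$; $I_v$ gives $d_v(L_v) \geq 1/|I_v|^s = 1/|I_u|^s$ by condition (1); and $S_{e_4}(I_u)$ is an $R_v$-candidate, giving $d_v(R_v) \geq (h_u/h_v)/|I_u|^s$ by Lemma \ref{2GlemI}(d). The remaining work is the $\leq$ direction.

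For (a) and (b) I would couple $L_u$ and $L_v$. If the extremal $L_u$ equals $I_u$ or is contained in $S_{e_1}(I_u)$ together with the adjacent gap $G_u$, then $d_u(L_u) \leq 1/|I_u|^s$ directly. Otherwise $L_u \neq I_u$ extends into $S_{e_2}(I_v)$, so Lemma \ref{2GlemL}(a) applies. The first term of its max is $d_u(S_{e_1}^{-1}(L_u)) = d_u(I_u) = 1/|I_u|^s$ since $L_u \supset S_{e_1}(I_u)$. The second, $d_v(S_{e_2}^{-1}(L_u))$, is the density of a left-anchored subinterval of $I_v$; iterating the density-preserving identity of Lemma \ref{2GlemI}(g) through nested copies of $S_{e_3}(I_v)$ (the process terminates since each step scales length by $1/r_{e_3} > 1$) produces an interval that is either contained in $S_{e_3}(I_v) \cup G_v$ (bounding by $1/|I_v|^s$) or is an $L_v$-candidate (bounding by $d_v(L_v)$). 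Hence $d_u(L_u) < \max\{1/|I_u|^s, d_v(L_v)\}$, and symmetrically via Lemma \ref{2GlemL}(b), $d_v(L_v) < \max\{1/|I_u|^s, d_u(L_u)\}$. If $d_u(L_u) > 1/|I_u|^s$, the first inequality forces $d_v(L_v) > d_u(L_u)$ and the second then forces $d_u(L_u) > d_v(L_v)$, a contradiction. Hence (a) holds, and (b) follows by symmetry.

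For (c) and (d), set $T = 1/|I_u|^s$ and $T_4 = (h_u/h_v) T$. The trivial configurations $R_u = I_u$ or $R_u \subset G_u \cup S_{e_2}(I_v)$ yield $d_u(R_u) \leq T$, using condition (2) to bound $d_u(S_{e_2}(I_v)) = (h_v/h_u) T \leq T$. Otherwise Lemma \ref{2GlemL}(a) applies, and I would iterate on the right-anchored interval $S_{e_1}^{-1}(R_u)$: any right-anchored interval strictly inside $S_{e_2}(I_v)$ transfers via Lemma \ref{2GlemI}(f) to a right-anchored interval in $I_v$ with density scaled by $h_v/h_u$, and any right-anchored interval strictly inside $S_{e_4}(I_u)$ transfers back via Lemma \ref{2GlemI}(h) with density scaled by $h_u/h_v$. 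These factors cancel over every full cycle so density is preserved, while lengths grow by $1/(r_{e_2} r_{e_4}) > 1$, forcing termination at an $R_u$- or $R_v$-candidate and yielding $d_u(S_{e_1}^{-1}(R_u)) \leq \max\{d_u(R_u), (h_v/h_u) d_v(R_v)\}$. Hence in the non-trivial case $d_u(R_u) < (h_v/h_u) d_v(R_v)$, and symmetrically for non-trivial $R_v$, $d_v(R_v) < (h_u/h_v) d_u(R_u)$. Assuming $d_u(R_u) > T$, the first strict inequality gives $d_v(R_v) > T_4$, forcing $R_v$ non-trivial; combining the two then yields $d_u(R_u) < d_u(R_u)$, a contradiction. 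So $d_u(R_u) = T$, and $d_v(R_v) = T_4$ follows because the non-trivial case gives $d_v(R_v) < (h_u/h_v) d_u(R_u) = T_4$ while the trivial cases give $d_v(R_v) \leq T_4$ directly.

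The chief obstacle is the factor bookkeeping in the right-anchored iteration for (c) and (d); the cancellation of $(h_v/h_u)$ and $(h_u/h_v)$ over each full cycle is what makes the terminal bound $\max\{d_u(R_u), (h_v/h_u) d_v(R_v)\}$ come out correctly. Conditions (1) and (2) of Lemma \ref{2GlemK} are both essential: (1) unifies the targets of (a), (b), (c) at $1/|I_u|^s$, and (2) guarantees the trivial bound $(h_v/h_u) T \leq T$ used in (c).
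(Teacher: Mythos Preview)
Your proof is correct and follows essentially the same strategy as the paper's: assume the claimed value is exceeded, apply Lemma~\ref{2GlemL} to obtain a strict inequality, iterate the inverse similarities (via Lemma~\ref{2GlemI}) until you land on a candidate for one of the extremal intervals $L_u,L_v,R_u,R_v$, and then invoke maximality to close the contradiction. The only organisational difference is that the paper, for part (a), runs the full cycle $u\to v\to u$ in one go (producing the chain $d_u(L_u)<d_v(L_{u,1})<d_u(L_{u,2})\leq d_u(L_u)$ and thereby proving (a) on its own), whereas you stop each iteration at the \emph{other} vertex's extremal interval and couple the resulting pair of strict inequalities $d_u(L_u)<\max\{T,d_v(L_v)\}$ and $d_v(L_v)<\max\{T,d_u(L_u)\}$ to obtain (a) and (b) simultaneously; likewise for (c) and (d). This is a harmless repackaging of the same idea, and your handling of the $h_v/h_u$ and $h_u/h_v$ factors in the right-anchored iteration for (c)--(d) is exactly the additional bookkeeping the paper alludes to when it says those parts are ``slightly more involved but very similar in method''.
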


\begin{proof}
(a) As stated in Lemma \ref{2GlemI}(a), $d_u(I_u)=d_u(S_{e_1}(I_u))=\frac{1}{\left|I_u\right|^s}$, which implies, from the definition of $L_u$ in Equation (\ref{L_u}), that $d_u(L_u)\geq \frac{1}{\left|I_u\right|^s}$. For a contradiction we assume $d_u(L_u)> \frac{1}{\left|I_u\right|^s}$. Clearly $S_{e_1}(I_u)\subsetneqq L_u \subsetneqq I_u$. Also if the right hand endpoint of the interval $L_u$ were to lie in the gap between the intervals $S_{e_1}(I_u)$ and $S_{e_2}(I_v)$ then $d_u(S_{e_1}(I_u))>d_u(L_u)$ which contradicts our assumption, so the right hand endpoint of $L_u$ lies in $S_{e_2}(I_v)$. This is the situation illustrated in Figure \ref{2GLu}.
\begin{figure}[htb]
\begin{center}
\includegraphics[trim = 10mm 150mm 10mm 15mm, clip, scale =0.7]{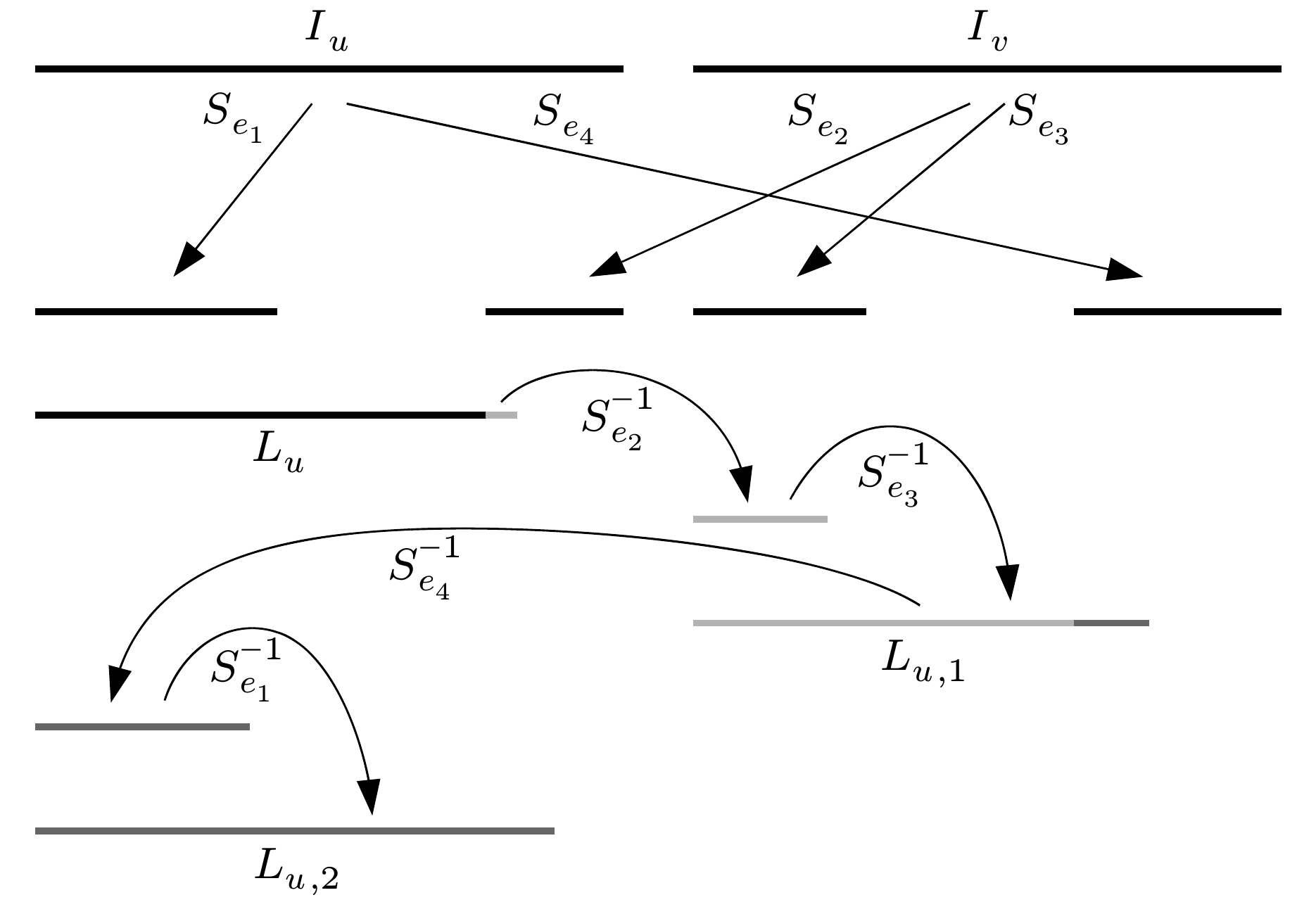}
\end{center}
\caption{The intervals $L_u, \, L_{u,1}$, and $L_{u,2}$.}
\label{2GLu}
\end{figure}
 
Applying Lemma \ref{2GlemL}(a), we obtain 
\begin{equation*}
d_u(L_u)< d_v(S_{e_2}^{-1}(L_u)),
\end{equation*}
since    $d_u(S_{e_1}^{-1}(L_u))=d_u(I_u)=\frac{1}{\left|I_u\right|^s}<d_u(L_u)$. If necessary, by repeatedly applying the expanding similarity $S_{e_3}^{-1}$ to the interval $S_{e_2}^{-1}(L_u)\cap I_v$, we must eventually arrive at an interval $L_{u,1}$, which is not contained in the interval $S_{e_3}(I_v)$, where $L_{u,1}=S_{e_3}^{-m}(S_{e_2}^{-1}(L_u)\cap I_v)$, for some $m\geq 0$. By Lemma \ref{2GlemI}(g) $d_v(S_{e_3}^{-m}(S_{e_2}^{-1}(L_u)\cap I_v))=d_v(S_{e_2}^{-1}(L_u))$ so
\begin{equation}
\label{ineq1}
d_u(L_u)< d_v(L_{u,1}).
\end{equation}
and $d_v(L_{u,1})>\frac{1}{\left|I_u\right|^s}$. Again the right hand endpoint of $L_{u,1}$ cannot lie in the gap between the intervals $S_{e_3}(I_v)$ and $S_{e_4}(I_u)$ for then, $d_v(S_{e_3}(I_v))>d_v(L_{u,1})>\frac{1}{\left|I_u\right|^s}$. This is impossible because $d_v(S_{e_3}(I_v))=\frac{1}{\left|I_u\right|^s}$, by Lemma \ref{2GlemI}(c) and condition (1) of Lemma \ref{2GlemK}. Similarly since $d_v(I_v)=\frac{1}{\left|I_u\right|^s}$, again by Lemma \ref{2GlemI}(c) and condition (1) of Lemma \ref{2GlemK}, we cannot have $L_{u,1}=I_v$. Therefore $S_{e_3}(I_v) \subsetneqq L_{u,1} \subsetneqq I_v$. The situation is shown in Figure \ref{2GLu} for $m=1$. 

Now we may apply Lemma \ref{2GlemL}(b), to obtain
\begin{equation*}
d_v(L_{u,1})<  d_u(S_{e_4}^{-1}(L_{u,1})),
\end{equation*}
as  $d_v(S_{e_3}^{-1}(L_{u,1}))=d_v(I_v)=\frac{1}{\left|I_u\right|^s}<d_v(L_{u,1})$. If necessary, by repeatedly applying the expanding similarity $S_{e_1}^{-1}$ to the interval $S_{e_4}^{-1}(L_{u,1}) \cap I_u$, we must eventually arrive at an interval $L_{u,2}$, with $S_{e_1}(I_u)\subset L_{u,2}$, where $L_{u,2}=S_{e_1}^{-n}(S_{e_4}^{-1}(L_{u,1}) \cap I_u)$, for some $n\geq 0$. The situation is illustrated in Figure \ref{2GLu} for $n=1$. By Lemma \ref{2GlemI}(e) $d_u(L_{u,2})=d_u(S_{e_4}^{-1}(L_{u,1}))$ so 
\begin{equation}
\label{ineq2}
d_v(L_{u,1})<d_u(L_{u,2}).
\end{equation}
From the definition of the interval $L_u$ in Equation (\ref{L_u}), $d_u(L_{u,2})\leq d_u(L_u)$, which together with Equations (\ref{ineq1}) and (\ref{ineq2}) gives
\begin{equation*}
d_u(L_u)<d_v(L_{u,1})<d_u(L_{u,2})\leq d_u(L_u).
\end{equation*} 
This contradiction completes the proof of part (a).

The proof of part (b) is symmetrically identical to that of part (a). The proofs of parts (c) and (d) are slightly more involved but very similar in method. See Lemma 3.4.6 \cite{phdthesis_Boore}. 
\end{proof}
The next theorem enables the calculation of the Hausdorff measure of both of the attractors of a class of $2$-vertex IFSs. 
\begin{thm}
\label{2GthmN}
For the $2$-vertex IFS of Figure \ref{P2Vertexb1}, where $s=\dimH F_u$ $=\dimH F_v$, suppose that the following conditions hold,
\begin{equation*}
\textup{(1)}  \quad \left|I_u\right|=\left|I_v\right|, \quad \textup{(2)}  \quad \frac{h_v}{h_u} \leq 1, \quad \textup{(3)}  \quad \frac{(a+g_u)(\left|I_u\right|^s-a^s)}{ba^s}\geq 1.
\end{equation*}
Then
\begin{equation*}
\mathcal{H}^s(F_u)=\left|I_u\right|^s \quad \textrm{and} \quad \mathcal{H}^s(F_v)=\left|I_u\right|^s\biggl(\frac{1-r_{e_1}^s}{r_{e_2}^s}\biggr).
\end{equation*}
\end{thm}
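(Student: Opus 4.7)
The second formula follows immediately from the first via Equation~(\ref{h_vh_u}), which gives $h_v/h_u = (1-r_{e_1}^s)/r_{e_2}^s$; combined with $\mathcal H^s(F_u)=h_u=|I_u|^s$ this yields $\mathcal H^s(F_v)=|I_u|^s(1-r_{e_1}^s)/r_{e_2}^s$. So the plan is to prove $\mathcal H^s(F_u)=|I_u|^s$. The upper bound $\mathcal H^s(F_u)\leq |I_u|^s$ is an instant consequence of Corollary~\ref{2GcorG}(a) applied with $A=F_u$, since $a_u,b_u\in F_u$ imply $|F_u|=|I_u|$.

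For the matching lower bound, Lemma~\ref{2GlemH} reduces the problem to showing $\sup\{d_u(J):J\subset I_u\}\leq 1/|I_u|^s$. I would prove this by an iteration argument on arbitrary sub-intervals $J$, jointly with the companion bound $\sup\{d_v(J):J\subset I_v\}\leq (h_u/h_v)/|I_u|^s$, split into three cases. \textbf{(a)} If $J$ is strictly contained in one of the four level-$1$ images $S_{e_i}(I_{t(e_i)})$, then Lemma~\ref{2GlemI}(e)--(h) transfers the question to a strictly longer interval in $I_u$ or $I_v$; condition~(2), $h_v/h_u\leq 1$, guarantees that this transfer does not push the density above the target bound on the new side. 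Since each such inversion inflates the length by a factor $\geq 1/\max_i r_{e_i}>1$, case~(a) terminates after finitely many steps. \textbf{(b)} If $J$ already contains one of the level-$1$ images, the maximality of $L_u,R_u,L_v,R_v$ established in Lemma~\ref{2GlemM} together with the identities $d_u(L_u)=d_u(R_u)=d_v(L_v)=1/|I_u|^s$ and $d_v(R_v)=(h_u/h_v)/|I_u|^s$ delivers the required density bound directly. \textbf{(c)} If $J$ straddles the interior gap of $I_u$ (or $I_v$), Lemma~\ref{2GlemL} produces a strictly greater density on a pulled-back half-interval sitting in $I_u$ or $I_v$.

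The hard part will be preventing case~(c) from recurring forever, since Lemma~\ref{2GlemL} only gives a strict, not a uniform, improvement. I would resolve this as in the proof of Lemma~\ref{2GlemM}(a): take a putative counterexample $J^\ast$ with $d_u(J^\ast)>1/|I_u|^s$ whose density is as near the supremum as desired (using the continuity of $d_u$ in the interval endpoints, since $\mu_u,\mu_v$ have no atoms, to justify passing to an extremiser); case~(a) then enlarges $J^\ast$ without lowering its density, and case~(c) produces an interval of \emph{strictly} greater density, both contradicting the near-extremality once the process is forced into case~(b), at which point Lemma~\ref{2GlemM} closes the loop. The hypotheses~(1)--(3) enter exactly through Lemma~\ref{2GlemK}, which supplies the strict inequalities $f_u<1$ and $f_v<1$ that power Lemma~\ref{2GlemL}; without (1) and (2) the functions $f_u,f_v$ would not even be bounded by $1$ on $P\cup\{(a,b)\}$, and without (3) the strict inequality would fail near the critical corner.
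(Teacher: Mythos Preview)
Your outline is essentially the paper's own strategy: reduce to the density bound via Lemma~\ref{2GlemH}, shuttle intervals between $I_u$ and $I_v$ with Lemma~\ref{2GlemI}(e)--(h), use Lemma~\ref{2GlemL} for gap-straddling intervals, and finish with Lemma~\ref{2GlemM}. The reductions in your opening paragraph (the second formula via~(\ref{h_vh_u}), the upper bound via Corollary~\ref{2GcorG}(a)) are exactly right.

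Where your proposal is weaker than the paper is in the handling of your case~(c). You flag the danger of case~(c) recurring indefinitely and propose to escape via an extremiser/compactness argument ``as in the proof of Lemma~\ref{2GlemM}(a)''. But your sketch leaves unjustified precisely the point you yourself identify as hard: \emph{why} is the process ``forced into case~(b)''? The paper dissolves this difficulty with a simple geometric observation you are missing. When $J_u$ straddles the gap in $I_u$, the two pulled-back pieces $S_{e_1}^{-1}(J_u)\cap I_u$ and $S_{e_2}^{-1}(J_u)\cap I_v$ are \emph{anchored at an endpoint}: the first contains $b_u$, the second contains $a_v$ (and symmetrically for $J_v$). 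An anchored interval, once blown up by the appropriate inverse maps $S_{e_1}^{-1},S_{e_3}^{-1}$ (or $S_{e_2}^{-1}\circ S_{e_4}^{-1}$, etc.) until it is no longer contained in a level-$1$ piece, necessarily \emph{contains} the level-$1$ image at that end, so it lands directly in the scope of~(\ref{L_u})--(\ref{R_v}) and Lemma~\ref{2GlemM}. Thus Lemma~\ref{2GlemL} is applied exactly once, the four resulting densities are each bounded by $1/|I_u|^s$ or $(h_u/h_v)/|I_u|^s$ via Lemma~\ref{2GlemM}, and collecting these in~(\ref{ineqJ_u})--(\ref{ineqJ_v}) finishes the proof as a direct finite computation, with no contradiction argument and no recursion to control.

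Note also that your extremiser argument, as written, has a second soft spot: when Lemma~\ref{2GlemL}(a) yields $d_u(J^\ast)<d_v(S_{e_2}^{-1}(J^\ast))$, this is a strict increase in $d_v$, not in $d_u$, and since the companion target for $d_v$ is $(h_u/h_v)/|I_u|^s\geq 1/|I_u|^s$ there is no immediate contradiction with extremality on the $v$-side. It is again the anchoring that rescues you, because the pulled-back interval in $I_v$ is forced under $L_v$ after expansion and Lemma~\ref{2GlemM}(b) gives $d_v\leq 1/|I_u|^s<\alpha_u$; but you have not said this.
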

\begin{proof}
For any interval $J\subset I_u$, with $d_u(J)>0$, we aim to show that $d_u(J)\leq \frac{1}{\left|I_u\right|^s}$, then, by Lemma \ref{2GlemH}, the maximum density will satisfy 
\begin{equation*}
\sup \bigl\{d_u(J)  :  J\subset I_u\bigr\}=\frac{1}{\mathcal{H}^s(F_u)} = \frac{1}{\left|I_u\right|^s}.
\end{equation*}
By Lemma \ref{2GlemI}(e), for any interval $J\subset I_u$, $d_u(J)=d_u(S_{e_1}^{-1}(S_{e_1}(J)))=d_u(S_{e_1}(J))$, so it is enough to prove $d_u(J)\leq \frac{1}{\left|I_u\right|^s}$ for any interval $J$ contained in a level-$1$ interval. 

Let $J\subset I_u$ be any interval contained in one of the level-$1$ intervals $S_{e_1}(I_u)$ or $S_{e_2}(I_v)$ with $d_u(J)>0$. Operating on $J$ with the expanding similarities $S_{e_1}^{-1}$,  $S_{e_2}^{-1}$, $S_{e_3}^{-1}$,  $S_{e_4}^{-1}$, as necessary, we must eventually arrive at an interval $J_u\subset I_u$ or $J_v\subset I_v$, which is not contained in any level-$1$ interval. The situation is illustrated in Figure \ref{2GJuJv}. For $J_u\subset I_u$ the maps $S_{e_2}^{-1}$ and $S_{e_4}^{-1}$ must be applied an equal number of times to the interval $J$, and so the scaling factors of $\frac{h_v}{h_u}$ and $\frac{h_u}{h_v}$ in Lemma \ref{2GlemI}(f) and (h) will cancel each other out. This means, by Lemma \ref{2GlemI}(e), (f), (g) and (h), that $d_u(J_u)=d_u(J)$. If $J_u=I_u$, then $d_u(J_u)=\frac{1}{\left|I_u\right|^s}$, by Lemma \ref{2GlemI}(a), so we may assume $J_u\neq I_u$. Applying Lemma \ref{2GlemL}(a) gives
\begin{equation}
\label{ineqJ_u}
d_u(J)=d_u(J_u)<\max\left\{\, d_u(S_{e_1}^{-1}(J_u)), \ d_v(S_{e_2}^{-1}(J_u))\, \right\}.
\end{equation}
For $J_v\subset I_v$, the map $S_{e_2}^{-1}$ must have been applied exactly one more time to the interval $J$ than the map $S_{e_4}^{-1}$, so a factor of $\frac{h_v}{h_u}$ will occur by Lemma \ref{2GlemI}(f). This means, by Lemma \ref{2GlemI}(e), (f), (g) and (h), that $\frac{h_v}{h_u}d_v(J_v)=d_u(J)$. If $J_v=I_v$, then $\frac{h_v}{h_u}d_v(J_v)=\frac{h_v}{h_u}\frac{1}{\left|I_u\right|^s}\leq \frac{1}{\left|I_u\right|^s}$, by Lemma \ref{2GlemI}(c) and condition (2), so we may assume $J_v\neq I_v$. Applying Lemma \ref{2GlemL}(b) gives
\begin{equation}
\label{ineqJ_v}
d_u(J)=\frac{h_v}{h_u}d_v(J_v)<\frac{h_v}{h_u}\max\left\{\, d_v(S_{e_3}^{-1}(J_v)), \ d_u(S_{e_4}^{-1}(J_v))\, \right\}.
\end{equation}
We now determine upper bounds for the densities (a) $d_u(S_{e_1}^{-1}(J_u))$, (b) $d_v(S_{e_2}^{-1}(J_u))$, (c) $d_v(S_{e_3}^{-1}(J_v))$, and (d) $d_u(S_{e_4}^{-1}(J_v))$, considering each in turn.

(a) $d_u(S_{e_1}^{-1}(J_u))$.

Expanding the interval $S_{e_1}^{-1}(J_u)\cap I_u$, if necessary, we obtain an interval $J_{u,1}$, not contained in any level-$1$ interval, where one of the following two possibilites hold,
\begin{align*}
&\textrm{(i) } \quad  S_{e_2}(I_v) \subset J_{u,1}=\bigl( (S_{e_4}^{-1}\circ S_{e_2}^{-1})^m \bigr) \bigl( S_{e_1}^{-1}(J_u)\cap I_u \bigr) \subset I_u, \\
&\textrm{(ii)} \quad  S_{e_4}(I_u) \subset J_{u,1}=\bigl( S_{e_2}^{-1}\circ (S_{e_4}^{-1}\circ S_{e_2}^{-1})^n \bigl) \bigl( S_{e_1}^{-1}(J_u)\cap I_u \bigr) \subset I_v, 
\end{align*}
for $m,n\geq 0$. For (i), using Lemma \ref{2GlemI}(f) and (h), and Lemma \ref{2GlemM}(c), we obtain
\begin{equation*}
d_u(S_{e_1}^{-1}(J_u))=d_u(S_{e_1}^{-1}(J_u)\cap I_u)=d_u(J_{u,1})\leq d_u(R_u)=\frac{1}{\left|I_u\right|^s}.
\end{equation*}
For (ii), using Lemma \ref{2GlemI}(f) and (h), and Lemma \ref{2GlemM}(d), we obtain
\begin{equation*}
d_u(S_{e_1}^{-1}(J_u))=d_u(S_{e_1}^{-1}(J_u)\cap I_u)=\frac{h_v}{h_u}d_v(J_{u,1})\leq \frac{h_v}{h_u}d_v(R_v)=\frac{h_v}{h_u}\frac{h_u}{h_v}\frac{1}{\left|I_u\right|^s}=\frac{1}{\left|I_u\right|^s},
\end{equation*}
In both cases 
\begin{equation*}
d_u(S_{e_1}^{-1}(J_u))\leq \frac{1}{\left|I_u\right|^s}.
\end{equation*}

(b) $d_v(S_{e_2}^{-1}(J_u))$.

Expanding the interval $S_{e_2}^{-1}(J_u)\cap I_v$, if necessary, we obtain an interval $J_{u,1}\subset I_v$, not contained in any level-$1$ interval, with
\begin{equation*}
S_{e_3}(I_v) \subset J_{u,1}=\bigl( (S_{e_3}^{-1})^m \bigl) \bigl( S_{e_2}^{-1}(J_u)\cap I_v \bigr) \subset I_v,
\end{equation*}
for $m\geq0$. By Lemma \ref{2GlemI}(g) and Lemma \ref{2GlemM}(b),
\begin{equation*}
d_v(S_{e_2}^{-1}(J_u))=d_v(S_{e_2}^{-1}(J_u)\cap I_v)=d_v(J_{u,1})\leq d_v(L_v)=\frac{1}{\left|I_u\right|^s}.
\end{equation*}

(c) $d_v(S_{e_3}^{-1}(J_v))$.

Expanding the interval $ S_{e_3}^{-1}(J_v)\cap I_v$, if necessary, we obtain an interval $J_{v,1}$, not contained in any level-$1$ interval, where one of the following two possibilites hold,
\begin{align*}
&\textrm{(i) } \quad  S_{e_4}(I_u) \subset J_{v,1}=\bigl( (S_{e_2}^{-1}\circ S_{e_4}^{-1})^m \bigr) \bigl( S_{e_3}^{-1}(J_v)\cap I_v \bigr) \subset I_v, \\
&\textrm{(ii)} \quad  S_{e_2}(I_v) \subset J_{v,1}=\bigl( S_{e_4}^{-1}\circ (S_{e_2}^{-1}\circ S_{e_4}^{-1})^m \bigr) \bigl( S_{e_3}^{-1}(J_v)\cap I_v \bigr) \subset I_u, 
\end{align*}
for $m,n\geq 0$. For (i), using Lemma \ref{2GlemI}(f) and (h), and Lemma \ref{2GlemM}(b), gives
\begin{equation*}
d_v(S_{e_3}^{-1}(J_v))=d_v(S_{e_3}^{-1}(J_v)\cap I_v)=d_v(J_{v,1})\leq d_v(R_v)= \frac{h_u}{h_v}\frac{1}{\left|I_u\right|^s}.
\end{equation*}
For (ii), using Lemma \ref{2GlemI}(f) and (h), and Lemma \ref{2GlemM}(c), gives
\begin{equation*}
d_v(S_{e_3}^{-1}(J_v))=d_v(S_{e_3}^{-1}(J_v)\cap I_v)=\frac{h_u}{h_v}d_u(J_{v,1})\leq \frac{h_u}{h_v}d_u(R_u)= \frac{h_u}{h_v}\frac{1}{\left|I_u\right|^s}.
\end{equation*}
In both cases 
\begin{equation*}
d_v(S_{e_3}^{-1}(J_v))\leq \frac{h_u}{h_v}\frac{1}{\left|I_u\right|^s}.
\end{equation*}

(d) $d_u(S_{e_4}^{-1}(J_v))$.

Expanding the interval $S_{e_4}^{-1}(J_v)\cap I_u$, if necessary, we obtain an interval $J_{v,1}\subset I_u$, not contained in any level-$1$ interval, with
\begin{equation*}
S_{e_1}(I_u) \subset J_{v,1}=\bigl( (S_{e_1}^{-1})^m \bigr) \bigl( S_{e_4}^{-1}(J_v)\cap I_u \bigr) \subset I_u,
\end{equation*}
for $m\geq0$. By Lemma \ref{2GlemI}(e) and Lemma \ref{2GlemM}(a),
\begin{equation*}
d_u(S_{e_4}^{-1}(J_v))=d_u(S_{e_4}^{-1}(J_v)\cap I_u)=d_u(J_{v,1})\leq d_u(L_u)=\frac{1}{\left|I_u\right|^s}.
\end{equation*}

Putting the results of parts (a) and (b) into Equation (\ref{ineqJ_u}) we obtain
\begin{equation*}
d_u(J)=d_u(J_u)<\max\left\{\, d_u(S_{e_1}^{-1}(J_u)), \ d_v(S_{e_2}^{-1}(J_u))\, \right\} \leq \frac{1}{\left|I_u\right|^s}.
\end{equation*}
Putting the results of parts (c) and (d) into Equation (\ref{ineqJ_v}), remembering that by condition (2), $\frac{h_u}{h_v}\geq 1$, gives
\begin{equation*}
d_u(J)=\frac{h_v}{h_u}d_v(J_v)<\frac{h_v}{h_u}\max\left\{\, d_v(S_{e_3}^{-1}(J_v)), \ d_u(S_{e_4}^{-1}(J_v))\, \right\} \leq \frac{h_v}{h_u}\frac{h_u}{h_v}\frac{1}{\left|I_u\right|^s}=\frac{1}{\left|I_u\right|^s}.
\end{equation*}

Therefore in all cases
\begin{equation*}
d_u(J)\leq \frac{1}{\left|I_u\right|^s},
\end{equation*} 
which completes the proof that $\mathcal{H}^s(F_u)=\left|I_u\right|^s$. The expression for $\mathcal{H}^s(F_v)$ now follows immediately by Equation (\ref{h_vh_u}). 
\end{proof}
We now define the \emph{$2$-vertex IFS (on the unit interval) of Figure \ref{P2Vertexb1}} to be the $2$-vertex directed graph IFS of Figure \ref{P2Vertexb1} but with $a_u$, $b_u$, $a_v$ and $b_v$ taking the specific values $a_u=a_v=0$ and $b_u=b_v=1$,  so that $I_u=I_v=[0,1]$ and $\left|I_u\right|=\left|I_v\right|=1$. For the rest of this paper we only consider this family of $2$-vertex directed graph IFSs for which condition (1) of Theorem \ref{2GthmN} always holds. By Equations (\ref{r_{e_1}}), the contracting similarity ratios of these IFSs are 
\begin{equation}
\label{r_{e_i} unit interval}
r_{e_1}=a, \quad r_{e_2}=b, \quad r_{e_3}=c, \quad r_{e_4}=d,
\end{equation}
and by Equations (\ref{S_{e_i}}), the similarities are 
\begin{equation}
\label{S_{e_i} unit interval}
\begin{split}
&S_{e_1}(x)=r_{e_1}x, \quad S_{e_2}(x)=r_{e_2}x+a+g_u, \\
&S_{e_3}(x)=r_{e_3}x, \quad S_{e_4}(x)=r_{e_4}x+c+g_v.
\end{split}
\end{equation}
\begin{figure}[htb]
\begin{center}
\includegraphics[trim = 25mm 177mm 25mm 40mm, clip, scale =0.55]{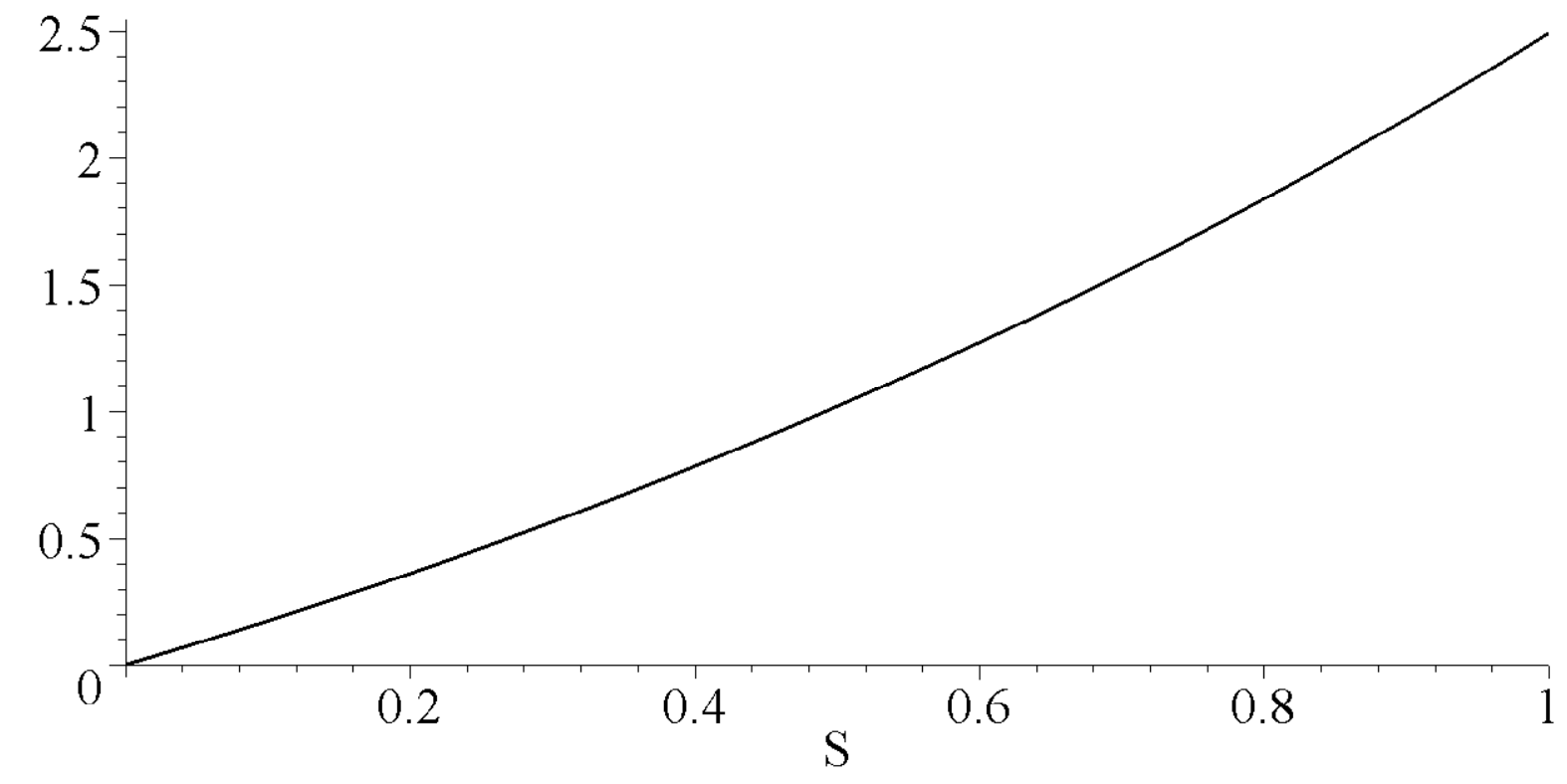}
\end{center}
\caption{A plot of $\frac{(a+g_u)(\left|I_u\right|^s-a^s)}{ba^s}$.}
\label{m3}
\end{figure}

We finish this section with a few such examples to show that the conditions of Theorem \ref{2GthmN} do in fact hold for a wide range of parameter values. Consider the $2$-vertex IFS (on the unit interval) of Figure \ref{P2Vertexb1} and let $a=\frac{11}{23}, \ g_u=\frac{5}{23}, \ b=\frac{7}{23}$ which we keep fixed. By varying the other three parameters $c$, $g_v$ and $d$ we may let the Hausdorff dimension $s$ range between $0$ and $1$. The graph of Figure \ref{m3} shows clearly that once condition (3) holds it continues to do so as $s$ increases. Putting $c=\frac{13}{73}$, $g_v=\frac{53}{73}$, $d=\frac{7}{73}$, the Hausdorff dimension is $s=0.4934118279$, and $\frac{h_v}{h_u}=0.5486642748 < 1$, so (2) holds, and (3) holds because $\frac{(a+g_u)(\left|I_u\right|^s-a^s)}{ba^s}=1.003400992 > 1$. Now increasing $c$ and $d$ will increase the Hausdorff dimension and so $(3)$ will continue to hold but eventually $(2)$ will fail. As an example let $c=\frac{43}{73},  g_v=\frac{7}{73}, d=\frac{23}{73}$ this gives a Hausdorff dimension of $s=0.7990855723$ but (2) fails because $\frac{h_v}{h_u}=1.152194154$.

Overall then this brief analysis does confirm that conditions (1), (2), and (3) will hold for a wide range of values of the parameters. Thus we have identified attractors of a class of $2$-vertex IFSs for which the Hausdorff measure is known.

\section{Gap lengths}\label{five}
In this section we only consider IFSs, $\bigl(V,E^*,i,t,r,((\mathbb{R},\left| \ \  \right|))_{v \in V},(S_e)_{e \in E^1}\bigr)$, for which the convex strong separation condition (CSSC) holds. The attractors of such IFSs can be written  as $(F_u)_{u \in V}= \bigcap_{k=0}^{\infty}(F_u^k)_{u \in V}$, where $F_u^k$  denotes the set of \emph{level-$k$ intervals at the vertex $u$}, see Subsection 2.2.1 \cite{phdthesis_Boore}, for the $1$-vertex case see \cite{Book_KJF2}. Some level-$k$ intervals are illustrated for a $2$-vertex IFS in Figure \ref{2G2VertexExample}. The CSSC ensures that if there are $n$ edges leaving a vertex $u$ then the level-$1$ intervals, $F_u^1$, will consist of $n$ disjoint intervals which will have $n-1$ open intervals between them. That is $I_u \setminus F_u^1 = \bigcup_{i=1}^{n-1}J_i^1$, where $I_u=C(F_u)$ and each $J_i^1$ is an open interval. The \emph{set of level-$1$ gap lengths at the vertex $u$} is defined as  
\begin{equation}
\label{G_u^1}
G_u^1=\biggl\{\left|J_i^1\right|    :  J_i^1 \textrm{ is an open interval in } I_u \setminus F_u^1 = \bigcup_{i=1}^{n-1}J_i^1 \biggr\}. 
\end{equation} 
In general $I_u \setminus F_u^k$ is a finite union of open intervals so $I_u \setminus F_u^k=\bigcup_{i \in H_k}J_i^k$, for some finite indexing set $H_k$. The \emph{set of level-$k$ gap lengths at the vertex $u$} is defined as
\begin{equation*}
G_u^k=\biggl\{\left|J_i^k\right| : J_i^k \textrm{ is an open interval in } I_u \setminus F_u^k = \bigcup_{i \in H_k}J_i^k \biggr\}. 
\end{equation*}
It follows that $I_u\setminus F_u= I_u \setminus \left( \bigcap_{k=0}^{\infty} F_u^k  \right)=\bigcup_{k=0}^{\infty} I_u\setminus F_u^k$. Since $I_u\setminus F_u^k=\bigcup_{i \in H_k}J_i^k$, for some finite indexing set $H_k$ and open intervals $J_i^k$ it is clear that $I_u\setminus F_u$ can be written as  a countable union of open intervals, $I_u\setminus F_u= \bigcup_{i=1}^{\infty}J_i$. We define the uniquely determined \emph{set of gap lengths of the attractor $F_u$} as
\begin{equation*}
\label{G_u definition}
G_u=\bigcup_{n=1}^{\infty}G_u^n = \biggl\{\left|J_i\right|    :  J_i \textrm{ is an open interval in } I_u \setminus F_u = \bigcup_{i=1}^{\infty}J_i \biggr\}. 
\end{equation*}

We now give an alternative description of the set $G_u$. For each edge $e \in E^1$ let $R_e:\mathbb{R} \to \mathbb{R}$ be the map $R_e(x)=r_ex$, where $r_e$ is the contracting similarity ratio of $S_e$. Let $\widetilde{f}:(K(\mathbb{R}^n))^{\#V} \to (K(\mathbb{R}^n))^{\#V}$, be the map defined by
\begin{equation*}
\widetilde{f}\left(\left(A_u\right)_{u \in V}\right) = \biggl(  \ \bigcup_{ e\in E_u^1  } R_e(A_{t(e)}) \ \bigcup \ G_u^1  \  \biggr)_{u \in V},  
\end{equation*}
for each $\left(A_u\right)_{u \in V} \in (K(\mathbb{R}^{n}))^{\#V}$. Here the sets of level-$1$ gap lengths, $(G_u^1)_{u \in V}$, which are called condensation sets in \cite{Book_Barnsley} for standard ($1$-vertex) IFSs, are clearly non-empty and compact so $(G_u^1)_{u \in V} \in (K(\mathbb{R}^{n}))^{\#V}$. It can be shown that $\widetilde{f}$ is a contraction on the complete metric space $((K(\mathbb{R}^{n}))^{\#V}, D_{\textup{H}})$, where $D_{\textup{H}}$ is the metric defined as the maximum of the coordinate Hausdorff metrics, see Theorem 9.1, \cite{Book_Barnsley}, for a proof for $1$-vertex IFSs. As 
\begin{equation}
\label{gap attractors}
\left(G_u\cup\left\{0\right\}\right)_{u \in V} = \biggl(  \ \bigcup_{ e\in E_u^1 } R_e(G_{t(e)}\cup\left\{0\right\}) \ \bigcup \ G_u^1  \  \biggr)_{u \in V},  
\end{equation}
the Contraction Mapping Theorem ensures that $\left(G_u\cup\left\{0\right\}\right)_{u \in V}$ is the unique fixed point of $\widetilde{f}$. The invariance Equations (\ref{Theorem 1 b}) and (\ref{gap attractors}) show clearly  the close relationship between attractors and their sets of gap lengths.

At a given vertex $u$ we can write the set of gap lengths in terms of similarity ratios of paths in the graph and level-$1$ gap lengths as
\begin{equation*}
G_u \ = \ \bigcup_{g_u\in G_u^1}g_u\bigl\{1,  r_{\be} : \be \in E_{uu}^*\bigr\} \quad \bigcup  \quad \bigcup_{\substack{ v\in V \\ v \neq u \\ g_v \in G_v^1}} g_v \bigl\{r_{\be} : \be \in E_{uv}^*\bigr\}.
\end{equation*}
  
For an IFS, $\bigl(V,E^*,i,t,r,((\mathbb{R},\left| \ \  \right|))_{v \in V},(S_e)_{e \in E^1}\bigr)$, for which the CSSC holds, Proposition 2.3.6 \cite{phdthesis_Boore} gives a constructive algorithm for calculating the set of gap lengths of any attractor as a finite union of cosets of finitely generated semigroups of positive real numbers. The generators of these semigroups are contracting similarity ratios of simple cycles in the directed graph. The algorithm works for any such IFS with no limit on the number of vertices in the directed graph.

We use the notation $(\mathbb{R}^+,\times)$ for the semigroup of positive real numbers under multiplication. For $x_i \in \mathbb{R}^+$, $1\leq i \leq j$, $\left\langle 1,x_1,x_2, \ldots ,x_j\right\rangle$ is the finitely generated subsemigroup (with identity) of $(\mathbb{R}^+,\times)$, where $\left\langle 1,x_1,x_2, \ldots ,x_j\right\rangle=\{x_1^{k_1}x_2^{k_2}\cdots x_j^{k_j}: k_i \in \mathbb{N}\cup \{ 0 \}, \,  1\leq i \leq j\}$ and for $y \in \mathbb{R}^+$ we write $y\left\langle 1,x_1,x_2, \ldots ,x_j\right\rangle$ for a coset with $y\left\langle 1,x_1,x_2, \ldots ,x_j\right\rangle=\{yx_1^{k_1}x_2^{k_2}\cdots x_j^{k_j}: k_i \in \mathbb{N}\cup \{ 0 \}, \, 1\leq i \leq j\}$. We will use the notation $\left\langle x_1,x_2, \ldots ,x_j\right\rangle_{\textrm{group}}=\{x_1^{k_1}x_2^{k_2}\cdots x_j^{k_j}: k_i \in \mathbb{Z},\, 1\leq i \leq j\}$ for the finitely generated group, the group operation again being multiplication.

Applying the algorithm of Proposition 2.3.6 \cite{phdthesis_Boore}, or alternatively by inspection, the gap lengths of the attractor $F_u$ of the $2$-vertex IFS (on the unit interval) of Figure \ref{P2Vertexb1} can be expressed as
\begin{align}
\label{gap2vertex}
G_u &= g_u \bigl\{1,  r_{\be} : \be \in E_{uu}^*\bigr\} \cup g_v \bigl\{r_{\be} : \be \in E_{uv}^*\bigr\} \nonumber \\
&= g_u\left\langle 1, a, bd\right\rangle \cup g_ubd\left\langle 1,  a, bd, c\right\rangle \cup g_vb\left\langle 1, a, bd, c\right\rangle  \nonumber \\
&=g_u\left\langle 1,a\right\rangle \cup g_ubd\left\langle 1,a,bd,c\right\rangle  \cup  g_vb\left\langle 1,a,bd,c\right\rangle. 
\end{align}
The generators of these semigroups are contracting similarity ratios of the simple cycles in the graph with $r_{e_1}=a$, $r_{e_2}r_{e_4}=bd$, and $r_{e_3}=c$.

Let $\bigl(V,E^*,i,t,r,((\mathbb{R},\left| \ \  \right|))_{v \in V},(S_e)_{e \in E^1}\bigr)$ be any $1$-vertex IFS, for which the CSSC holds, and which has $n$ edges leaving its single vertex then, as given in Equation (\ref{G_u^1}), the level-$1$ gap lengths are $G^1=\left\{g_j : 1 \leq j \leq n-1\right\}$ and the gap lengths of the attractor $F$ are given by
\begin{equation}
\label{gap1vertex}
G = \bigcup_{j=1}^{n-1}g_j\left\langle 1,  r_{e_1},  r_{e_2},  \ldots  , r_{e_n}\right\rangle,
\end{equation}
where $g_j, r_{e_i} \in \mathbb{R}^+$, and $r_{e_i}$, $1 \leq i \leq n$, are the contracting similarity ratios of the $n$ similarities $(S_e)_{e \in E^1}$. See Corollary 2.3.8 \cite{phdthesis_Boore}.

In the next section we use these expressions for gap lengths as a means of distinguishing between the attractors of $2$-vertex IFSs and the attractors of $1$-vertex IFSs for which the CSSC holds.
   
\section{Attractors of directed graph IFSs that are not attractors of standard IFSs for which the CSSC holds}\label{six}

We now simplify the $2$-vertex IFS (on the unit interval) of Figure \ref{P2Vertexb1} even further by taking the similarities $S_{e_2}$ and $S_{e_4}$ to have the same similarity ratio. That is we put $b=r_{e_2}=r_{e_4}=d$. The gap lengths of the attractor $F_u$ are now
\begin{equation}
\label{gap2vertexb}
G_u = g_u\left\langle 1,a\right\rangle \cup g_ub^2\left\langle 1,a,b^2,c\right\rangle  \cup  g_vb\left\langle 1,a,b^2,c\right\rangle, 
\end{equation}
by Equation (\ref{gap2vertex}). From Equations (\ref{gap2vertexb}) and (\ref{gap1vertex}), to prove that $F_u$ is an attractor which cannot be the attractor of any standard ($1$-vertex) IFS, for which the CSSC holds, it is enough to prove Lemma \ref{lemF}, which shows that $G_u$ cannot be the set of gap lengths of any $1$-vertex IFS for which the CSSC holds. We state this formally in Corollary \ref{corC}. We will need the following notion of multiplicative rational independence.

Let $U=\left\{u_1,  u_2,  \ldots  ,  u_r\right\}$ be a set of positive real numbers, then $U$ is a \emph{multiplicatively rationally independent} set if, for all integers $ m_i \in \mathbb{Z} $, $\sum_{i=1} ^r m_i \ln u_i = 0$, implies $m_i=0$ for all $i$, $1 \leq i \leq r$, or equivalently if $\prod_{i=1} ^r u_i^{ m_i } = 1$, then $m_i=0$ for all $i$, $1 \leq i \leq r$. 

\begin{lem}
\label{lemF}
Let $\left\{g_u, g_v, a, b,  c\right\}\subset \mathbb{R^+}$ be a multiplicatively rationally independent set. Then
\begin{equation*}
g_u\left\langle 1,a\right\rangle \cup g_ub^2\left\langle 1,a,b^2,c\right\rangle  \cup  g_vb\left\langle 1,a,b^2,c\right\rangle \neq \bigcup_{j=1}^{m}h_j\left\langle 1,  x_1,  x_2,  \ldots  , x_n\right\rangle,
\end{equation*}
for any  $h_j \in \mathbb{R}^+$, $1 \leq j \leq m$, and any $x_k \in \mathbb{R}^+$, $1 \leq k \leq n$. 
\end{lem}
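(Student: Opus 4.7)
The plan is to translate the multiplicative statement into an additive one using the multiplicative rational independence of $\{g_u, g_v, a, b, c\}$, and then derive a contradiction from the rigid parity structure of the $b$-exponents in $G_u$. By hypothesis the subgroup $\langle g_u, g_v, a, b, c\rangle_{\text{group}} \subset (\mathbb{R}^+, \times)$ is free abelian of rank $5$, so I identify each of its elements with its integer exponent vector $(p, q, r, s, t)$ with respect to the basis $(g_u, g_v, a, b, c)$. Then $G_u = T_1 \cup T_2 \cup T_3$, where
\begin{align*}
T_1 &= \{(1, 0, k, 0, 0) : k \geq 0\}, \\
T_2 &= \{(1, 0, k_1, 2+2k_2, k_3) : k_1, k_2, k_3 \geq 0\}, \\
T_3 &= \{(0, 1, k_1, 1+2k_2, k_3) : k_1, k_2, k_3 \geq 0\}.
\end{align*}
Assume for contradiction that $G_u = \bigcup_{j=1}^m h_j \langle 1, x_1, \ldots, x_n\rangle$. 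Since $h_j \in G_u$ and $h_j x_i \in G_u$, we have $x_i = (h_j x_i)/h_j \in \langle g_u, g_v, a, b, c\rangle_{\text{group}}$, so both $\mathbf{h}_j$ and $\mathbf{x}_i$ are vectors in $\mathbb{Z}^5$.

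Next I would extract structural constraints on the $\mathbf{x}_i$. Letting $k \to \infty$ in $\mathbf{h}_j + k\mathbf{x}_i \in G_u$, and using that throughout $G_u$ the coordinates satisfy $p, q \in \{0, 1\}$ and $r, s, t \geq 0$, each $\mathbf{x}_i$ must have $p_i = q_i = 0$ and $r_i, s_i, t_i \geq 0$. Secondly, $g_u = (1, 0, 0, 0, 0) \in G_u$ must lie in some coset, and non-negativity forces $\mathbf{h}_{j_0} = g_u$ for that index. Then $g_u + \mathbf{x}_i \in G_u$ for every $i$, which, on inspection of $T_1$ and $T_2$, forces each $\mathbf{x}_i$ to satisfy either $s_i = t_i = 0$, or $s_i \in \{2, 4, 6, \ldots\}$. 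The key consequence is the implication $s_i = 0 \Rightarrow t_i = 0$.

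The crucial step then examines the slice $T_3' = \{(0, 1, k_1, 1, k_3) : k_1, k_3 \geq 0\} \subset T_3$. Any element of $T_3'$ lies in some coset $\mathbf{h}_j + \sum_i k_i \mathbf{x}_i$; since the $s$-coordinates of $\mathbf{h}_j$ and of the $\mathbf{x}_i$ are non-negative with $\mathbf{h}_j$'s $s$-coordinate odd and at least $1$, one is forced to have $h_j^s = 1$ and $k_i = 0$ whenever $s_i > 0$. The surviving generators then have $s_i = 0$ and hence, by the implication above, $t_i = 0$. Every element of $T_3'$ captured by that coset therefore shares the same $t$-coordinate as $\mathbf{h}_j$. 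With only $m$ cosets available, only finitely many $t$-values can be realised in this way, contradicting the fact that $T_3'$ has elements with every $k_3 \geq 0$. I expect the only technical work to be the parity and non-negativity bookkeeping, and foresee no real obstacle beyond that.
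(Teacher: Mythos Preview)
Your argument is correct. Translating everything into exponent vectors via the free abelian structure, deducing $p_i=q_i=0$ and $r_i,s_i,t_i\ge 0$ for each generator from boundedness/non-negativity in $G_u$, pinning down $\mathbf{h}_{j_0}=g_u$, and then reading off the dichotomy ``$s_i=0$ and $t_i=0$'' versus ``$s_i\ge 2$ even'' from $g_u x_i\in T_1\cup T_2$ all go through as you describe. The finishing step on the slice $T_3'$ is clean: for any element there the base point $\mathbf{h}_j$ is forced into $T_3$ with $s$-coordinate exactly $1$, so only generators with $s_i=0$ (hence $t_i=0$) can contribute, freezing the $t$-coordinate to that of $\mathbf{h}_j$; finitely many cosets then cannot cover all $k_3\ge 0$.

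The paper reaches the same preliminary constraints (its parts (a)--(c)) but closes the argument differently. It works on the $g_uA$ side rather than on $T_3$: from the family $g_ub^2c^M$ it forces some generator to be a pure power $c^t$, then observes that this generator pushes any $h_j\in g_u\langle 1,a\rangle$ out of $g_uA$, so every such $h_j$ must already lie in $g_ub^2\langle 1,a,b^2,c\rangle$, whence $g_ua$ is never hit. Your route avoids the case analysis in the paper's parts (b) and (d)--(e) by exploiting the single parity implication $s_i=0\Rightarrow t_i=0$ and a pigeonhole on the $t$-coordinate; it is shorter and arguably more transparent. The paper's argument, on the other hand, isolates the structural fact that a pure $c$-power must occur among the $x_k$, which is of some independent interest and is the ingredient that generalises to the setting of Theorem~\ref{thmA}.
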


\begin{proof}
For a contradiction we assume there exist positive real numbers $h_j$, $1 \leq j \leq m$, and $x_k$, $1 \leq k \leq n$, such that
\begin{equation}
\label{assumption}
g_u\left\langle 1,a\right\rangle \cup g_ub^2\left\langle 1,a,b^2,c\right\rangle  \cup  g_vb\left\langle 1,a,b^2,c\right\rangle = \bigcup_{j=1}^{m}h_j\left\langle 1,  x_1,  x_2,  \ldots  , x_n\right\rangle.
\end{equation}
This can be written as $g_uA \cup  g_vB = \bigcup_{j=1}^{m}h_j\left\langle 1,  x_1,  x_2,  \ldots  , x_n\right\rangle$, where
\begin{align*}
A  &= \left\langle 1,a\right\rangle \cup b^2\left\langle 1,a,b^2,c\right\rangle = \left\{a^pb^{2q}c^r  :  p,  q,  r  \in \mathbb{N}\cup\left\{0\right\}, \textrm{ if } q=0 \textrm{ then } r=0\right\}, \\
B &= b\left\langle 1,a,b^2,c\right\rangle = \left\{a^pb^{2q+1}c^r  :  p,  q,  r  \in \mathbb{N}\cup\left\{0\right\}\right\}.
\end{align*}
\noindent (a) $g_uA \cap g_vB = \emptyset$.

If $g_uA \cap g_vB \neq \emptyset$, then there exists $x \in g_uA \cap g_vB$ with $x=g_u^1g_v^0a^{p_1}b^{2q_1}c^{r_1}=g_u^0g_v^1a^{p_2}b^{2q_2+1}c^{r_2}$, which, by the rational independence of the set $\left\{g_u, g_v, a, b,  c\right\}$, implies $1=0$. This means that for any $h_j$, $1 \leq j \leq m$, either $h_j \in g_uA$ or $h_j \in g_vB$ but not both, so we consider each case in turn in parts (b) and (c).

\medskip

\noindent (b) \emph{$h_j \in g_uA$ implies $h_j\langle 1,  x_1,  x_2,  \ldots  , x_n\rangle \subset g_uA$ and $\langle 1,  x_1,  x_2,  \ldots  , x_n\rangle \subset \langle 1,a,b^2,c\rangle$}.

Suppose $h_j \in g_uA$ then $h_j = g_u^1a^{\alpha_1}b^{2\alpha_2}c^{\alpha_3}$. Let $x \in \left\langle  1,  x_1,  x_2,  \ldots  , x_n \right\rangle$. Assume $h_jx \in g_vB$ then $h_jx = g_u^1a^{\alpha_1}b^{2\alpha_2}c^{\alpha_3}x=g_v^1a^{\beta_1}b^{2\beta_2+1}c^{\beta_3}$, and  so $x$ is given by  $x=g_u^{-1}g_v^1a^{\beta_1-\alpha_1}b^{2(\beta_2-\alpha_2)+1}c^{\beta_3-\alpha_3}$, where we are now considering $x \in \langle g_u,g_v,a,b,c\rangle_{\textrm{group}}$. Consider any $k \in \mathbb{N}$ with $k \geq2$, then the exponent of $g_v$ in $h_jx^k$ is $k$. Either $h_jx^k \in g_uA$ or $h_jx^k \in g_vB$. If $h_jx^k \in g_uA$ then by rational independence $k=0$, which is a contradiction and if $h_jx^k \in g_vB$ then by rational independence $k=1$ which is again a contradiction. Therefore $h_jx \notin g_vB$ and so $h_jx \in g_uA$. 

We now write $h_jx$ as $h_jx = g_u^1a^{\alpha_1}b^{2\alpha_2}c^{\alpha_3}x=g_u^1a^{\beta_1}b^{2\beta_2}c^{\beta_3}$, with $x$ given as $x = a^{\beta_1-\alpha_1}b^{2(\beta_2-\alpha_2)}c^{\beta_3-\alpha_3}$, where strictly speaking we are again considering $x \in \langle g_u,g_v,a,b,c\rangle_{\textrm{group}}$. For any $k \in \mathbb{N}$, the exponent of $g_u$ in $h_jx^k$ is $1$ and so by rational independence, if $h_jx^k \in g_vB$, then $1=0$, which implies $h_jx^k \in g_uA$, and $h_jx^k = g_u^1a^{\alpha_1+k(\beta_1-\alpha_1)}b^{2\alpha_2+2k(\beta_2-\alpha_2)}c^{\alpha_3+k(\beta_3-\alpha_3)} = g_u^1a^{\delta_1(k)}b^{2\delta_2(k)}c^{\delta_3(k)}$, where $\delta_1(k), \delta_2(k)$,$\delta_3(k)$  $\in \mathbb{N}\cup \left\{0\right\}$. Again by rational independence we may conclude that $\alpha_1+k(\beta_1-\alpha_1)\geq 0$, $2\alpha_2+2k(\beta_2-\alpha_2)\geq 0$, and $\alpha_3+k(\beta_3-\alpha_3)\geq 0$, for all $k \in \mathbb{N}$. Hence $\beta_1-\alpha_1\geq 0$, $\beta_2-\alpha_2\geq 0$, $\beta_3-\alpha_3\geq 0$ so that $x \in \left\langle 1,a,b^2,c\right\rangle$. In summary we have shown that $h_j \in g_uA$ implies $h_jx \in g_uA$ for all $x \in \left\langle 1,  x_1,  x_2,  \ldots  , x_n\right\rangle$, and $\left\langle 1,  x_1,  x_2,  \ldots  , x_n\right\rangle \subset \left\langle 1,a,b^2,c\right\rangle$. 
  
\medskip

\noindent (c) \emph{$h_j \in g_vB$ implies $h_j\langle 1,  x_1,  x_2,  \ldots  , x_n\rangle \subset g_vB$ and $\langle 1,  x_1,  x_2,  \ldots  , x_n\rangle \subset \langle 1,a,b^2,c\rangle$}.

The proof is very similar to part (b), see Lemma 2.6.1 \cite{phdthesis_Boore}.

\medskip

Relabelling the $h_j$ if necessary, the results of parts (a), (b) and (c) imply that the set $\left\{h_1,  h_2, \ldots  ,  h_m\right\}$ must split into two non-empty subsets, $\left\{h_1,  h_2, \ldots  ,  h_r\right\}\subset g_uA$ and $\left\{h_{r+1},  h_{r+2}, \ldots  ,  h_m\right\}\subset g_vB$, with
\begin{align}
\label{g_uA}
g_uA &= \bigcup_{j=1}^{r}h_j\left\langle 1,  x_1,  x_2,  \ldots  , x_n\right\rangle, \\
g_vB &= \bigcup_{j=r+1}^{m}h_j\left\langle 1,  x_1,  x_2,  \ldots  , x_n\right\rangle, \nonumber
\end{align}
where $\left\langle 1,  x_1,  x_2,  \ldots  , x_n\right\rangle \subset \left\langle 1,a,b^2,c\right\rangle$.
 
\medskip

\noindent (d) \emph{At least one of the generators $x_k$, $1 \leq k \leq n$, of the semigroup $\left\langle 1,  x_1,  x_2,  \ldots  , x_n\right\rangle$, is of the form $x_k=c^t$, for some $t \in  \mathbb{N}$}. 

We recall that $g_uA = g_u\left\{a^pb^{2q}c^r  :  p,  q,  r,  \in \mathbb{N}\cup\left\{0\right\}, \textrm{ if } q=0 \textrm{ then } r=0\right\}$, so that $g_ub^2c^m \in g_uA$ for all $m \in \mathbb{N}$. Considering $M \in \mathbb{N}$ as fixed, then from Equation (\ref{g_uA})
\begin{equation}
\label{g_ub^2c^M}
g_ub^2c^M = h_sx_1^{i_1}x_2^{i_2}\cdots x_n^{i_n},
\end{equation}
for some $h_s \in g_uA$, $1 \leq s \leq r$, and non-negative integers $i_k \in  \mathbb{N}\cup \left\{0\right\}$, $1 \leq k \leq n$. For a contradiction we now assume that none of the $x_k$, $1 \leq k \leq n$, is of the form $x_k=c^t, \, t \in  \mathbb{N}$. Rational independence and the fact that $\left\langle 1,  x_1,  x_2,  \ldots  , x_n\right\rangle \subset \left\langle 1,a,b^2,c\right\rangle$, then implies that $h_s=g_uc^p$ and $x_k=b^2c^q$, for some $k, \,1 \leq k \leq n$, with $i_k=1$ and $i_l=0$ for all $l\neq k$, and where $p,q \in \mathbb{N}\cup\left\{0\right\}$, with $p+q=M$. That is Equation (\ref{g_ub^2c^M}) reduces to $g_ub^2c^M = h_sx_k$. Since we only have a finite number of generators in the semigroup $\left\langle 1,  x_1, x_2,  \ldots  , x_n\right\rangle$ and a finite set of numbers $\left\{h_i  : 1 \leq i \leq r \right\}$, we can only produce at most $r\times n$ distinct numbers of the form $g_ub^2c^m$, on the right-hand side of Equation (\ref{g_uA}). Therefore
\begin{equation*}
\left\{g_ub^2c^m  :  m \in \mathbb{N}\right\} \not\subset \bigcup_{j=1}^{r}h_j\left\langle 1,  x_1,  x_2,  \ldots  , x_n\right\rangle,
\end{equation*}
but
\begin{equation*}
\left\{g_ub^2c^m  :  m \in \mathbb{N}\right\} \subset g_uA.
\end{equation*}
This contradiction of Equation (\ref{g_uA}) means our assumption is false and at least one of the generators $x_k$, $1 \leq k \leq n$, must be of the form $x_k=c^t$, for some $t \in  \mathbb{N}$. 

\medskip

\noindent (e) $g_ua \notin \bigcup_{j=1}^{r}h_j\left\langle 1,  x_1,  x_2,  \ldots  , x_n\right\rangle$. 

From the result of part (d), relabelling the $x_k$ if necessary, so that $x_1=c^t$, $t \in \mathbb{N}$, we may write Equation (\ref{g_uA}) as
\begin{equation*}
g_uA   = g_u\left\langle 1,a\right\rangle \cup g_ub^2\left\langle 1,a,b^2,c\right\rangle = \bigcup_{j=1}^{r}h_j\left\langle 1,  c^t,  x_2,  \ldots  , x_n\right\rangle,
\end{equation*}
where $\left\langle 1,  c^t,  x_2,  \ldots  , x_n\right\rangle \subset \left\langle 1,a,b^2,c\right\rangle$. Now $g_u\left\langle 1,a\right\rangle \cap g_ub^2\left\langle 1,a,b^2,c\right\rangle= \emptyset$, by the rational independence of the set $\left\{g_u, g_v, a, b,  c\right\}$, so for each $j$, $1 \leq j \leq r$, either $h_j \in g_u\left\langle 1,a\right\rangle$ or $h_j \in g_ub^2\left\langle 1,a,b^2,c\right\rangle$ but not both.

Suppose $h_j \in g_u\left\langle 1,a\right\rangle$, then $h_j=g_ua^k$ for some $k \in \mathbb{N}\cup\left\{0\right\}$. It follows, again by rational independence, that $h_jc^t=g_ua^kc^t\notin g_u\left\langle 1,a\right\rangle$ and $h_jc^t=g_ua^kc^t\notin g_ub^2\left\langle 1,a,b^2,c\right\rangle$, that is $h_jc^t \notin g_uA$. This contradiction means $h_j \in g_ub^2\langle 1,a,b^2,c\rangle$ for each $j$, $1 \leq j \leq r$, and so we may write $h_j$ as $h_j=g_ub^2a^{k_j}b^{2l_j}c^{m_j}$, for some $k_j,l_j,m_j \in \mathbb{N}\cup\left\{0\right\}$. The rational independence of the set $\left\{g_u, g_v, a, b,  c\right\}$, together with the fact that $\left\langle 1,  c^t,  x_2,  \ldots  , x_n\right\rangle \subset \left\langle 1,a,b^2,c\right\rangle$, implies that
\begin{equation*}
g_ua \notin  \bigcup_{j=1}^{r}g_ub^2a^{k_j}b^{2l_j}c^{m_j}\left\langle 1,  c^t,  x_2,  \ldots  , x_n\right\rangle = \bigcup_{i=1}^{r}h_i\left\langle 1,  x_1,  x_2,  \ldots  , x_n\right\rangle.
\end{equation*} 

As $g_ua \in g_uA$, this is again a contradiction of Equation (\ref{g_uA}). Therefore our original assumption is false, that is Equation (\ref{assumption}) does not hold.    
\end{proof}

\begin{cor}
\label{corC}
For the $2$-vertex IFS (on the unit interval) of Figure \ref{P2Vertexb1}, but with $b=d$, if the set $\left\{g_u, g_v, a, b,  c\right\}\subset \mathbb{R^+}$ is a multiplicatively rationally independent set, then the attractor at the vertex $u$, $F_u$, is not the attractor of any standard ($1$-vertex) IFS, defined on $\mathbb{R}$, for which the CSSC holds.
\end{cor}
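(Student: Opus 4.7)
The plan is to argue by contradiction, leveraging Lemma \ref{lemF} which has already done the heavy lifting. Assume, for a contradiction, that $F_u$ is the attractor of some standard $1$-vertex IFS defined on $\mathbb{R}$ for which the CSSC holds. My first step would be to emphasise that the set of gap lengths $G_u$ is an intrinsic invariant of $F_u \subset \mathbb{R}$: it is defined as the multiset (here, set) of lengths of the connected components of $C(F_u) \setminus F_u$, which depends only on $F_u$ as a compact subset of $\mathbb{R}$ and not on any particular IFS that produces it. Hence both of the IFS representations of $F_u$ must yield the same set $G_u$.

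Next I would read off the two resulting descriptions of $G_u$. From the $2$-vertex presentation in Figure \ref{P2Vertexb1} specialised to $b = d$, Equation (\ref{gap2vertexb}) gives
\begin{equation*}
G_u = g_u\langle 1,a\rangle \cup g_u b^2\langle 1,a,b^2,c\rangle \cup g_v b\langle 1,a,b^2,c\rangle.
\end{equation*}
On the other hand, from the hypothesised $1$-vertex presentation with $n$ edges, contraction ratios $r_{e_1},\dots,r_{e_n}$, and level-$1$ gap lengths $h_1,\dots,h_{n-1}$, Equation (\ref{gap1vertex}) forces
\begin{equation*}
G_u = \bigcup_{j=1}^{n-1} h_j\langle 1, r_{e_1}, r_{e_2}, \ldots, r_{e_n}\rangle.
\end{equation*}
Equating these two expressions for $G_u$ is then exactly the equation that Lemma \ref{lemF} rules out under the standing hypothesis that $\{g_u,g_v,a,b,c\}$ is multiplicatively rationally independent. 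This contradiction completes the argument.

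In short, the corollary is essentially a direct corollary: the substantive content is Lemma \ref{lemF} (where the main obstacle, separating the ``$g_u$ part'' from the ``$g_v$ part'' and forcing one of the generators $x_k$ to be a power of $c$, has already been overcome), and the only real thing to check here is the intrinsic nature of the gap length set, so that a hypothetical alternative $1$-vertex representation must produce the same $G_u$. No further estimates or constructions are needed.
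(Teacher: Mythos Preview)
Your proposal is correct and matches the paper's own approach: the paper explicitly states, just before the corollary, that ``to prove that $F_u$ is an attractor which cannot be the attractor of any standard ($1$-vertex) IFS, for which the CSSC holds, it is enough to prove Lemma \ref{lemF},'' combining Equations (\ref{gap2vertexb}) and (\ref{gap1vertex}) exactly as you do. Your added remark that $G_u$ is an intrinsic invariant of $F_u$ (depending only on $C(F_u)\setminus F_u$) makes explicit the one point the paper leaves implicit.
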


The next theorem  generalises Corollary \ref{corC} to a large class of directed graph IFSs, with any number of vertices, provided the directed graphs contain a particular subgraph. The proof is omitted but it is similar to the proof of Lemma \ref{lemF}, see Theorem 2.6.3 \cite{phdthesis_Boore}.

The \emph{vertex list} of a path $\be=e_1\cdots e_k \in E^*$ is  $v_1v_2v_3\cdots v_{k+1} = i(e_1) t(e_1) t(e_2) \cdots $ $t(e_k)$. A \emph{simple path} visits no vertex more than once, so a path $\be=e_1\cdots e_k\in E^*$ is simple if its vertex list contains exactly $k+1$ different vertices. A \emph{simple cycle} is a cycle which visits no vertex more than once apart from the initial and terminal vertices which are the same, so if $\be=e_1\cdots e_k\in E^*$ is a simple cycle then $i(\be)=t(\be)$ and its vertex list contains exactly $k$ different vertices. We say that \emph{two distinct paths are attached} if their vertex lists contain a common vertex or vertices. We also say that a \emph{path $\be$ is attached to a vertex $v$} if $v$ is in the vertex list of $\be$. A \emph{chain} is a finite sequence of distinct simple cycles where each simple cycle in the sequence is attached only to its immediate predecessor and successor cycles and to no other cycles in the sequence. A \emph{chain attached to a vertex $v$} is a chain of distinct simple cycles such that the first cycle in the sequence is attached to the vertex $v$ and thereafter no other cycle in the chain is attached to $v$.

\begin{thm}
\label{thmA}
Let $\bigl(V,E^*,i,t,r,((\mathbb{R},\left| \ \  \right|))_{v \in V},(S_e)_{e \in E^1}\bigr)$ be any directed graph IFS, satisfying the CSSC, whose directed graph contains three distinct simple cycles $\bc_1$, $\bc_2$, and $\bc_3$, such that $\bc_1$ is attached to a vertex $u$, $\bc_2\bc_3$ is a chain of length $2$ attached to $u$ and no chain in the graph, attached to $u$, contains both $\bc_1$ and $\bc_3$. Let $X_u \subset \mathbb{R}^+$, be the set of gap lengths and contracting similarity ratios
\begin{equation*}
X_u = \left\{g_w,  r_{\bc_i},  r_{\bp}  :  g_w \in G_w^1, \, w \in V, \, \bc_i \in T, \, \bp \in D_{uv}^*, \, v \in V, \, v \neq u\right\},
\end{equation*}
where $G_w^1$ is the set of level-$1$ gap lengths at the vertex $w \in V$, $T=\left\{\bc_i  :  i \in I\right\}$, the set of all simple cycles in the graph, and $D_{uv}^*\subset E_{uv}^*$, is the set of all simple paths from the vertex $u$ to the vertex $v$. 

Suppose the set $X_u$ is multiplicatively rationally independent, then the attractor at the vertex $u$, $F_u$, is not the attractor of any standard ($1$-vertex) IFS, defined on $\mathbb{R}$, for which the CSSC holds.
\end{thm}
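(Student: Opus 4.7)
My plan is to generalise the proof of Lemma \ref{lemF} by replacing the concrete numerical ratios $a, b, c$ with the abstract simple cycle ratios $r_{\bc_1}, r_{\bc_2}, r_{\bc_3}$ prescribed by the hypothesis, and by replacing the elementary counting of cycles in the 2-vertex case with the path-and-chain decomposition of Proposition 2.3.6 in \cite{phdthesis_Boore}. First I would use that algorithm to write the set of gap lengths $G_u$ as a finite union of cosets of finitely generated multiplicative subsemigroups of $\mathbb{R}^+$. Each coset has the form $g_{w}\, r_{\bp}\, \langle 1, r_{\bc_i} : i \in I\rangle$, where $\bp$ is a simple path from $u$ to some vertex $w$, $g_w$ is a level-$1$ gap at $w$, and the generators are ratios of simple cycles appearing in chains attached to the vertices of $\bp$.

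Assuming for contradiction that $G_u = \bigcup_{j=1}^m h_j \langle 1, x_1, \ldots, x_n\rangle$, the multiplicative rational independence of $X_u$ makes the cosets appearing in the decomposition of $G_u$ pairwise disjoint. An argument paralleling parts (a)--(c) of Lemma \ref{lemF} then assigns each $h_j$ to a unique coset of that decomposition and forces each generator $x_k$ to be a product of simple cycle ratios, so that $\langle 1, x_1, \ldots, x_n\rangle \subset \langle 1, r_{\bc_i} : \bc_i \in T\rangle$.

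The crucial structural steps generalise parts (d) and (e) of Lemma \ref{lemF}. Because $\bc_2\bc_3$ is a chain attached to $u$, one obtains $g_u\, r_{\bc_2}\, r_{\bc_3}^k \in G_u$ for every $k \geq 0$, by traversing $\bc_2$ from $u$ up to its vertex shared with $\bc_3$, looping $\bc_3$ $k$ times, and returning via the remainder of $\bc_2$ to $u$. Since only finitely many generators $x_k$ are available, multiplicative rational independence of $X_u$ forces at least one generator to equal $r_{\bc_3}^t$ for some $t \in \mathbb{N}$. Next, $g_u\, r_{\bc_1} \in G_u$ since $\bc_1$ is a simple cycle attached to $u$, and therefore it lies in some coset $h_j\langle 1, x_1, \ldots, x_n\rangle$; multiplying by the generator $r_{\bc_3}^t$ places $g_u\, r_{\bc_1}\, r_{\bc_3}^t$ in $G_u$ as well. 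But any element of $G_u$ of the form $g_u \cdot r$ matches the $g_u$-prefix of a coset only if $r$ equals the ratio of some closed path at $u$; and the excursion decomposition of such a path (Proposition 2.3.6) writes it as a product of excursion ratios, each excursion being the iteration of a chain attached to $u$. Any excursion contributing a factor $r_{\bc_3}$ must traverse a chain attached to $u$ which contains $\bc_3$, so its base simple cycle $\bc$ is attached to $u$; by hypothesis no such chain contains $\bc_1$, so $\bc \neq \bc_1$. Multiplicative rational independence of $X_u$ then prevents the factor $r_{\bc}$ from being cancelled by any other cycle ratio, and so $r_{\bc_1}\, r_{\bc_3}^t$ cannot arise as the ratio of a closed path at $u$. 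This contradiction completes the argument.

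The main obstacle will be the formalisation of the excursion decomposition of a closed path at $u$: showing rigorously from Proposition 2.3.6 that such a path always factors as a product of excursion ratios, each determined by a chain attached to $u$, and then invoking multiplicative rational independence of $X_u$ (which contains the ratio of every simple cycle in the graph) to conclude that the spurious factor $r_{\bc}$ cannot be absorbed or cancelled. Once this combinatorial step is made precise, the remainder of the proof is a direct, if lengthy, adaptation of the case analysis carried out in parts (a)--(e) of Lemma \ref{lemF}.
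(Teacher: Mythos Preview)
The paper omits the proof of this theorem, stating only that it ``is similar to the proof of Lemma \ref{lemF}'' and referring to Theorem 2.6.3 of \cite{phdthesis_Boore}; your proposal is precisely such a generalisation of Lemma \ref{lemF}, with the roles of $a,b^2,c$ taken over by $r_{\bc_1},r_{\bc_2},r_{\bc_3}$ and the coset decomposition of $G_u$ supplied by Proposition 2.3.6 of \cite{phdthesis_Boore}. Your outline is correct and matches the intended approach; the one place to be careful is the final contradiction, where the cleanest route is to note that any closed walk $\be$ at $u$ decomposes edge-multiset-wise into simple cycles, so rational independence of $\{r_{\bc_i}:\bc_i\in T\}\subset X_u$ forces $r_\be=r_{\bc_1}r_{\bc_3}^t$ to come from exactly one copy of $\bc_1$ and $t$ copies of $\bc_3$, which is impossible since $\bc_3$ is not attached to $u$ and (by the ``no chain contains both $\bc_1$ and $\bc_3$'' hypothesis) shares no vertex with $\bc_1$.
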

 
We can take the simple cycles of Theorem \ref{thmA} to be $\bc_1=e_1$, $\bc_2=e_2e_4$ and $\bc_3=e_3$, for the edges $e_i$, $1 \leq i \leq4$, of the $2$-vertex IFS (on the unit interval) of Figure \ref{P2Vertexb1}. This means Theorem \ref{thmA} immediately yields the next corollary, with the set $X_u =\left\{  g_u, g_v, a, bd, c, b \right\}$. The set $X_u$ is multiplicatively rationally independent if and only if the set $\left\{g_u, g_v, a, b, c, d\right\}$ is multiplicatively rationally independent.

\begin{cor}
\label{corCb}
For the $2$-vertex IFS (on the unit interval) of Figure \ref{P2Vertexb1}, if the set $\left\{ g_u, g_v, a, b, c, d \right\}$ $\subset \mathbb{R^+}$ is a multiplicatively rationally independent set, then the attractor at the vertex $u$, $F_u$, is not the attractor of any standard ($1$-vertex) IFS, defined on $\mathbb{R}$, for which the CSSC holds.
\end{cor}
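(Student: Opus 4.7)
The plan is to reduce Corollary \ref{corCb} directly to Theorem \ref{thmA} by exhibiting three simple cycles in the underlying graph of Figure \ref{P2Vertexb1} that satisfy the hypotheses of the theorem, and then to verify that multiplicative rational independence of $\{g_u,g_v,a,b,c,d\}$ transfers to the corresponding set $X_u$ of gap lengths, simple cycle ratios, and simple path ratios emanating from $u$.

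First I would designate the simple cycles $\bc_1=e_1$ (the loop at $u$), $\bc_2=e_2e_4$ (the two-edge cycle from $u$ through $v$ back to $u$), and $\bc_3=e_3$ (the loop at $v$). These are three distinct simple cycles, $\bc_1$ is attached to $u$, and $\bc_2\bc_3$ is a chain of length $2$ attached to $u$ because $\bc_2$ contains $u$, and $\bc_2$ and $\bc_3$ share the vertex $v$. The subtle condition to verify is that no chain attached to $u$ contains both $\bc_1$ and $\bc_3$: since $\bc_1$ and $\bc_3$ have no common vertex, any chain containing both must pass through an intermediate cycle, and the only simple cycle meeting both $u$ and $v$ is $\bc_2$; but $\bc_2$ itself is attached to $u$, which violates the condition in the definition of ``chain attached to $u$''. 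This rules out any admissible chain from $\bc_1$ to $\bc_3$.

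Next I would enumerate the set $X_u$ explicitly. The level-$1$ gap lengths are $g_u$ and $g_v$; the complete list of simple cycles is $e_1$, $e_3$, $e_2e_4$, $e_4e_2$, with ratios $a$, $c$, $bd$, $bd$; and the only simple paths from $u$ to another vertex go from $u$ to $v$ along the single edge $e_2$, with ratio $b$. Hence $X_u=\{g_u,g_v,a,c,bd,b\}$.

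Finally I would establish the equivalence of the two rational-independence statements. Suppose $g_u^{m_1}g_v^{m_2}a^{m_3}c^{m_4}(bd)^{m_5}b^{m_6}=1$ for integers $m_i$; expanding $(bd)^{m_5}=b^{m_5}d^{m_5}$ and regrouping gives $g_u^{m_1}g_v^{m_2}a^{m_3}b^{m_5+m_6}c^{m_4}d^{m_5}=1$, and rational independence of $\{g_u,g_v,a,b,c,d\}$ forces all exponents, including $m_5$ and $m_5+m_6$, to vanish, hence every $m_i=0$. Thus $X_u$ is multiplicatively rationally independent, and Theorem \ref{thmA} applies, yielding the conclusion. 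The main (and only nontrivial) obstacle is the chain-exclusion verification above; once that is in place, the rest is bookkeeping and a one-line algebraic manipulation.
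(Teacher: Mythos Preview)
Your proposal is correct and follows exactly the paper's route: identify the cycles $\bc_1=e_1$, $\bc_2=e_2e_4$, $\bc_3=e_3$, compute $X_u=\{g_u,g_v,a,c,bd,b\}$, check that rational independence of $\{g_u,g_v,a,b,c,d\}$ implies that of $X_u$, and invoke Theorem~\ref{thmA}. One small slip: you write ``the only simple cycle meeting both $u$ and $v$ is $\bc_2$'', but $e_4e_2$ is another (with the same ratio $bd$); this does not affect the argument, and in fact the chain-exclusion is even simpler than you state, since any cycle adjacent to $\bc_1$ in a chain must share its sole vertex $u$, forcing $\bc_1$ to be alone in any chain attached to $u$.
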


\section{Attractors of directed graph IFSs that are not attractors of standard IFSs}\label{seven}
Before proving Theorems \ref{2GthmU} and \ref{2GthmV} we first give some important consequences of $\mathcal{H}^s(F_u)=\left|I_u\right|^s$ in Lemmas \ref{2GlemO} and \ref{2GlemT}. The arguments we use are based on those employed by Feng and Wang in \cite{Paper_FengWang}. 

To illustrate the significance of Lemma \ref{2GlemO}, consider the $1$-vertex IFS defined on $\mathbb{R}$ by the similarities $S_1(x)=\frac{1}{3}x$, $S_2(x)=\frac{1}{27}x + \frac{4}{27}$, $S_3(x)=\frac{1}{3}x + \frac{2}{3}$. This is a modification of the Cantor set, C, which is the attractor of the IFS defined by $S_1$ and $S_3$, and for which $\mathcal{H}^s(C)=1$. The attractor $F$ is the unique non-empty compact set satisfying $F=\bigcup_{i=1}^3S_i(F)$. The OSC is satisfied for this IFS, this can be verified by taking the open set as $U=\left(0,\frac{1}{3}\right)\cup \left(\frac{2}{3},1\right)$. Actually the strong separation condition (SSC) holds but the CSSC does not. In particular, for $I=\left[0,1\right]$, $S_2(I)\subset S_1(I)$ but $S_1(F)\cap S_2(F)= \emptyset$. So $S_2(I)\subset S_1(I)$ does not imply $S_2(F)\subset S_1(F)$. In fact $S_1(F)\subsetneqq F \cap S_1(I)$, since $F \cap S_1(I)=S_1(F)\cup S_2(F)$. It follows by Lemma \ref{2GlemO}(b) that $\mathcal{H}^s(F)\neq \left|I\right|^s$, and so $\mathcal{H}^s(F)< 1$, by Lemmas \ref{2GlemD} and \ref{2GlemF}.  
\begin{lem}
\label{2GlemO}
Let $\bigl( V, E^{*}, i, t, r, (\mathbb{R},\left|  \ \ \right|)_{v \in V}, (S_e)_{e \in E^1} \bigr)$ be any directed graph IFS for which the OSC holds. For the attractor $F_u$ at the vertex $u$, let $s=\dimH F_u$ and $\left\{a_u,b_u\right\}\subset F_u \subset I_u=\left[a_u,b_u\right]$. Let $S_f:\mathbb{R} \to \mathbb{R}$ be any  similarity, with contracting similarity ratio $r_f$, $0<r_f<1$, and let $S_f(I_u)=\left[S_f(a_u),S_f(b_u)\right]=\left[a_f,b_f\right]$.

If $S_f(F_u)\subset F_u$ and $\mathcal{H}^s(F_u)=\left|I_u\right|^s$ then 
\begin{align*}
&\textup{(a)}  \quad \mathcal{H}^s(S_f(F_u))=\mathcal{H}^s(F_u\cap S_f(I_u))=\left(b_f-a_f\right)^s, \\
&\textup{(b)}  \quad S_f(F_u)= F_u\cap S_f(I_u). 
\end{align*}  
\end{lem}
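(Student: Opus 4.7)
The plan is to prove (a) by a sandwich argument between two Hausdorff measures, and then to deduce (b) from (a) by showing that the set-theoretic difference between the two closed sets must be empty, using that $F_u$ carries positive $\mathcal{H}^s$-mass in every neighbourhood of each of its points.

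For part (a), since $S_f$ is a similarity with ratio $r_f$ we have $\mathcal{H}^s(S_f(F_u)) = r_f^s \mathcal{H}^s(F_u) = r_f^s \left|I_u\right|^s = (b_f - a_f)^s$, using the hypothesis $\mathcal{H}^s(F_u) = \left|I_u\right|^s$. The assumption $S_f(F_u) \subset F_u$ together with $S_f(F_u) \subset S_f(I_u)$ gives $S_f(F_u) \subset F_u \cap S_f(I_u)$, so $(b_f-a_f)^s = \mathcal{H}^s(S_f(F_u)) \leq \mathcal{H}^s(F_u \cap S_f(I_u))$. In the other direction, $F_u\cap S_f(I_u)$ is a Borel (hence $\mathcal{H}^s$-measurable) subset of $F_u$, so Corollary \ref{2GcorG}(a) gives $\mathcal{H}^s(F_u\cap S_f(I_u)) \leq \left|F_u\cap S_f(I_u)\right|^s \leq \left|S_f(I_u)\right|^s = (b_f - a_f)^s$. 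Sandwiching these inequalities forces equality throughout, which is (a).

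For part (b), set $D = (F_u \cap S_f(I_u)) \setminus S_f(F_u)$. By (a), $\mathcal{H}^s(D) = \mathcal{H}^s(F_u\cap S_f(I_u)) - \mathcal{H}^s(S_f(F_u)) = 0$. I claim $D = \emptyset$. Suppose for contradiction that some $x \in D$ exists. Then $x \in F_u$, $x \in S_f(I_u)$, and $x \notin S_f(F_u)$. Since $a_u, b_u \in F_u$, the endpoints $a_f = S_f(a_u)$ and $b_f = S_f(b_u)$ of $S_f(I_u)$ both lie in $S_f(F_u)$, so $x$ is not an endpoint and therefore lies in the interior of $S_f(I_u)$. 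Because $S_f(F_u)$ is compact and does not contain $x$, we can pick an open interval $U$ with $x \in U \subset S_f(I_u)$ and $U \cap S_f(F_u) = \emptyset$. Then $F_u \cap U \subset F_u \cap S_f(I_u) = S_f(F_u) \cup D$, and since $U$ avoids $S_f(F_u)$ this gives $F_u \cap U \subset D$.

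The contradiction will come from showing $\mathcal{H}^s(F_u \cap U) > 0$. This is the step I expect to be the main obstacle, but it is standard: since $x \in F_u$ and the IFS satisfies OSC, by the address structure one can find a path $\be \in E_u^*$ with $x \in S_\be(F_{t(\be)})$ and $|S_\be(F_{t(\be)})| = r_\be |F_{t(\be)}|$ as small as desired, in particular with $S_\be(F_{t(\be)}) \subset U$. By Theorem \ref{Theorem 2}, $\mathcal{H}^s(S_\be(F_{t(\be)})) = r_\be^s \mathcal{H}^s(F_{t(\be)}) > 0$, and this piece is contained in $F_u \cap U$. Hence $\mathcal{H}^s(F_u \cap U) > 0$, contradicting $\mathcal{H}^s(D) = 0$. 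Therefore $D = \emptyset$, which gives $S_f(F_u) = F_u \cap S_f(I_u)$ and completes part (b).
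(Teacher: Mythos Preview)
Your proof is correct and follows essentially the same route as the paper. Part (a) is identical (a sandwich using the scaling property and Corollary~\ref{2GcorG}(a)), and for part (b) both arguments locate a point $x\in (F_u\cap S_f(I_u))\setminus S_f(F_u)$, use compactness of $S_f(F_u)$ to find a neighbourhood of $x$ disjoint from $S_f(F_u)$, and then invoke the address/coding structure of $F_u$ to produce a cylinder $S_{\be}(F_{t(\be)})\subset F_u$ of small diameter and positive $\mathcal{H}^s$-measure sitting inside that neighbourhood; the only cosmetic difference is that you package the contradiction via $\mathcal{H}^s(D)=0$ whereas the paper writes out the chain of inequalities directly.
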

\begin{proof}
(a) Clearly $S_f(F_u)\subset F_u\cap S_f(I_u)$, so
\begin{align*}
(b_f-a_f)^s&\geq \mathcal{H}^s(F_u\cap \left[a_f,b_f\right]) && (\textup{by Corollary \ref{2GcorG}(a)}) \\
&= \mathcal{H}^s(F_u\cap  S_f(I_u)) && \\
&\geq \mathcal{H}^s(S_f(F_u)) && \\
&= r_f^s\mathcal{H}^s(F_u) && (\textup{by the scaling property of the measure}) \\
&= \frac{(b_f-a_f)^s}{\left|I_u\right|^s}\mathcal{H}^s(F_u) && \\
&= (b_f-a_f)^s && (\textup{as $\mathcal{H}^s(F_u)=\left|I_u\right|^s$}). 
\end{align*}
(b) As $S_f(F_u)\subset F_u\cap S_f(I_u)$, we assume for a contradiction that $S_f(F_u)\subsetneqq F_u\cap S_f(I_u)$, so there exists a point $x\in F_u\cap S_f(I_u)$, such that $x \notin S_f(F_u)$. As $S_f(F_u)$ is compact, $\dist(x,S_f(F_u))>0$. The map, $\phi_u : E_u^\mathbb{N} \to F_u$, given by $\phi_u(\be) = x$,  $\left\{x\right\} = \bigcap_{k=1}^{\infty} S_{\be \vert_{k}}(F_{t(\be \vert_{ k})})$, is surjective so there exists an infinite path in the directed graph, $\be \in E_u^\mathbb{N}$, with $\left\{x\right\} = \bigcap_{k=1}^{\infty} S_{\be \vert_{ k}}(F_{t(\be \vert_{ k})})$. Now $\left(S_{\be \vert_{ k}}(F_{t(\be \vert_{ k})})\right)$, is a decreasing sequence of non-empty compact subsets of $F_u$, whose diameters tend to zero as $k$ tends to infinity, and so there exists $m \in \mathbb{N}$, such that $x \in S_{\be \vert_{m}}(F_{t(\be \vert_{m})})\subset F_u$, and $\left|S_{\be \vert_{m}}(F_{t(\be \vert_{m})})\right|<  \dist(x, S_f(F_u))$. It follows that $S_{\be \vert_{m}}(F_{t(\be \vert_{m})})\cap S_f(F_u)=\emptyset$. Also $x \in \left[a_f,b_f\right]$, and as $a_f, b_f \in S_f(F_u)$, $\dist(x, S_f(F_u))\leq x-a_f$ and $\dist(x, S_f(F_u))\leq b_f-x$ which means $S_{\be \vert_{m}}(F_{t(\be \vert_{m})})\subset \left[a_f,b_f\right]= S_f(I_u)$. In summary, 
\begin{equation}
\label{disjoint union}
( S_{\be \vert_{m}}(F_{t(\be \vert_{m})}) \cup S_f(F_u)) \subset (F_u\cap S_f(I_u)),
\end{equation}
where the union on the left hand side is disjoint.

By Theorem \ref{Theorem 2}, $\mathcal{H}^s(S_{\be \vert_{m}}(F_{t(\be \vert_{m})}))=r_{\be \vert_{m}}^s\mathcal{H}^s(F_{t(\be \vert_{m})})>0$, which we use to derive a contradiction as follows
\begin{align*}
(b_f-a_f)^s&= \mathcal{H}^s(F_u\cap S_f(I_u)) && (\textup{by part(a)}) \\
&\geq \mathcal{H}^s(S_{\be \vert_{m}}(F_{t(\be \vert_{m})}) \cup S_f(F_u)) && (\textup{by Equation (\ref{disjoint union})})\\
&= \mathcal{H}^s(S_{\be \vert_{m}}(F_{t(\be \vert_{m})})) + \mathcal{H}^s(S_f(F_u)) && (\textup{the union is disjoint}) \\
&> \mathcal{H}^s(S_f(F_u)) &&  \\
&= (b_f-a_f)^s && (\textup{by part (a)}).
\qedhere 
\end{align*} 
\end{proof}
The next inequality can be verified using calculus.
\begin{lem}
\label{2GlemS}
For $x,z>0$, $y\geq0$ and $0<p<1$,
\begin{equation*}
\left(x+y+z\right)^p < \left(x+y\right)^p + \left(y+z\right)^p - y^p.
\end{equation*}
\end{lem}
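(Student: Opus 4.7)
The plan is to fix $y \geq 0$, $z > 0$, and $0 < p < 1$, and to view the claim as a statement about the single-variable function
\begin{equation*}
f(x) = (x+y)^p + (y+z)^p - y^p - (x+y+z)^p
\end{equation*}
on $[0,\infty)$. We want to show $f(x) > 0$ for every $x > 0$, and the natural strategy is to check the boundary value $f(0)$ and then show $f$ is strictly increasing on $(0,\infty)$.

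First I would evaluate $f(0)$. With the convention $0^p = 0$, direct substitution gives $f(0) = y^p + (y+z)^p - y^p - (y+z)^p = 0$ (and this is valid whether $y > 0$ or $y = 0$). This already shows the inequality becomes an equality at the degenerate point $x = 0$, so the strict inequality for $x > 0$ must come from monotonicity.

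Next I would differentiate on $(0, \infty)$ to obtain
\begin{equation*}
f'(x) = p\,(x+y)^{p-1} - p\,(x+y+z)^{p-1}.
\end{equation*}
Since $0 < p < 1$, the exponent $p - 1$ is negative, so the map $t \mapsto t^{p-1}$ is strictly decreasing on $(0,\infty)$. For $x > 0$ we have $0 < x + y < x + y + z$ (using $z > 0$), so $(x+y)^{p-1} > (x+y+z)^{p-1}$, giving $f'(x) > 0$ on $(0,\infty)$. Combined with $f(0) = 0$, this yields $f(x) > 0$ for all $x > 0$, which is exactly the required strict inequality.

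There is essentially no obstacle here: the only point that needs a little care is the treatment of $y = 0$, where $f(0)$ uses $0^p = 0$ and where one should note that $f'(x)$ is still well defined for $x > 0$ because $x + y = x > 0$. Otherwise the argument is a one-line calculus check exploiting the concavity of $t \mapsto t^p$ for $0 < p < 1$.
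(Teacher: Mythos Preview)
Your proof is correct and is exactly the kind of calculus verification the paper has in mind: the paper itself does not give a detailed argument but simply states that the inequality ``can be verified using calculus.'' Your one-variable reduction, with $f(0)=0$ and $f'(x)>0$ on $(0,\infty)$ via the strict monotonicity of $t\mapsto t^{p-1}$, is a clean way to carry this out, and your remark about the $y=0$ boundary case is the only subtlety worth noting.
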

\begin{lem}
\label{2GlemT}
Let $\bigl( V, E^{*}, i, t, r, (\mathbb{R},\left|  \ \ \right|)_{v \in V}, (S_e)_{e \in E^1} \bigr)$ be any directed graph IFS for which the OSC holds. For the attractor $F_u$ at the vertex $u$, let $s=\dimH F_u$ and $\left\{a_u, b_u\right\}\subset F_u \subset I_u=\left[a_u, b_u\right]$. Let $S_f:\mathbb{R} \to \mathbb{R}$, $S_g:\mathbb{R} \to \mathbb{R}$, be any two distinct similarities with contracting similarity ratios $0<r_f,r_g<1$. 

If $S_f(F_u)\subset F_u$, $S_g(F_u)\subset F_u$, and $\mathcal{H}^s(F_u)=\left|I_u\right|^s$, then exactly one of the following three statements occurs,
\begin{align*}
&\textup{(a)}  \quad S_f(I_u)\cap S_g(I_u)=\emptyset, \textrm{ which implies } S_f(F_u)\cap S_g(F_u)=\emptyset, \\
&\textup{(b)}  \quad S_f(I_u)\subset S_g(I_u), \textrm{ which implies } S_f(F_u)\subset S_g(F_u), \\
&\textup{(c)}  \quad S_g(I_u)\subset S_f(I_u), \textrm{ which implies } S_g(F_u)\subset S_f(F_u).
\end{align*}  
\end{lem}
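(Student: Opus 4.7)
The plan is to first note that two closed intervals in $\mathbb{R}$ satisfy a positional trichotomy: either they are disjoint, or one contains the other, or they overlap in a non-degenerate interval with neither containing the other. Cases (a), (b), (c) cover the first two possibilities, so I would verify the three implications directly and then derive a contradiction from the ``proper overlap'' case.

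For the implications, (a) is immediate since $S_f(F_u) \subset S_f(I_u)$ and $S_g(F_u) \subset S_g(I_u)$. For (b) and (c), apply Lemma \ref{2GlemO}(b) to each similarity separately, so that $S_f(F_u) = F_u \cap S_f(I_u)$ and $S_g(F_u) = F_u \cap S_g(I_u)$; then $S_f(I_u) \subset S_g(I_u)$ forces $S_f(F_u) \subset S_g(F_u)$, and (c) is symmetric.

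The core of the argument is ruling out proper overlap. Suppose, without loss of generality, $S_f(I_u) = [a_f, b_f]$ and $S_g(I_u) = [a_g, b_g]$ with $a_f < a_g \leq b_f < b_g$, and set $x = a_g - a_f > 0$, $y = b_f - a_g \geq 0$, $z = b_g - b_f > 0$. By Lemma \ref{2GlemO}(a), $\mathcal{H}^s(F_u \cap S_f(I_u)) = (x+y)^s$ and $\mathcal{H}^s(F_u \cap S_g(I_u)) = (y+z)^s$, while Corollary \ref{2GcorG}(a) gives the upper bounds $\mathcal{H}^s(F_u \cap [a_g, b_f]) \leq y^s$ and $\mathcal{H}^s(F_u \cap [a_f, b_g]) \leq (x+y+z)^s$. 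Finite additivity of $\mathcal{H}^s$ on the union $[a_f, b_g] = S_f(I_u) \cup S_g(I_u)$ then yields
\begin{align*}
\mathcal{H}^s(F_u \cap [a_f, b_g]) &= (x+y)^s + (y+z)^s - \mathcal{H}^s(F_u \cap [a_g, b_f]) \\
&\geq (x+y)^s + (y+z)^s - y^s,
\end{align*}
and comparing this with the upper bound $(x+y+z)^s$ produces $(x+y+z)^s \geq (x+y)^s + (y+z)^s - y^s$, directly contradicting Lemma \ref{2GlemS}.

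I do not anticipate a real obstacle: the essential mechanism is that Lemma \ref{2GlemO}(a) supplies the sharp equalities on each of $S_f(I_u)$ and $S_g(I_u)$, Corollary \ref{2GcorG}(a) supplies only an upper bound on the overlap, and Lemma \ref{2GlemS} records exactly the strict subadditivity of $t\mapsto t^s$ needed to close the gap. The only minor nuance is the degenerate case $y=0$, where the overlap shrinks to a single point; Lemma \ref{2GlemS} handles this uniformly, reducing the contradiction to $(x+z)^s \geq x^s + z^s$.
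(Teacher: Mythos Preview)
Your proposal is correct and follows essentially the same route as the paper: list the interval configurations, dispose of the proper-overlap case via inclusion--exclusion together with Lemma~\ref{2GlemO}(a), Corollary~\ref{2GcorG}(a), and the strict concavity inequality of Lemma~\ref{2GlemS}, then read off the implications in (a), (b), (c) from Lemma~\ref{2GlemO}(b). The only cosmetic difference is that you treat the implications first and the overlap contradiction second, and you spell out the $y=0$ edge case explicitly; the paper does these in the opposite order and leaves $y=0$ implicit in Lemma~\ref{2GlemS}.
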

\begin{proof}This is similar to the claim in the proof of Theorem 4.1, \cite{Paper_FengWang}.

There are just five possibilities for the intervals $S_f(I_u)=\left[a_f,b_f\right]$, $S_g(I_u)=\left[a_g,b_g\right]$, 
\begin{align*}
\textup{(a)}  \quad \left[a_f,b_f\right]\cap &\left[a_g,b_g\right]=\emptyset, \quad \textup{(b)}  \quad \left[a_f,b_f\right]\subset \left[a_g,b_g\right], \quad \textup{(c)}  \quad \left[a_g,b_g\right]\subset \left[a_f,b_f\right],\\
&\textup{(d)}  \quad a_f < a_g\leq b_f < b_g, \quad \textup{(e)}  \quad a_g < a_f\leq b_g < b_f.
\end{align*}
First we prove that the situation in (d) cannot happen. 
\begin{align*}
 (b_g-a_f)^s&\geq \mathcal{H}^s(F_u\cap \left[a_f,b_g\right]) && (\textrm{by Corollary \ref{2GcorG}(a)}) \\
&= \mathcal{H}^s(F_u\cap \left[a_f,b_f\right]) + \mathcal{H}^s(F_u\cap \left[a_g,b_g\right]) && \\
& \quad \quad \quad \quad - \mathcal{H}^s(F_u\cap \left[a_g,b_f\right]) && (\textrm{a property of the measure})\\
&= \mathcal{H}^s(F_u\cap S_f(I_u)) + \mathcal{H}^s(F_u\cap S_g(I_u)) && \\
& \quad \quad \quad \quad - \mathcal{H}^s(F_u\cap \left[a_g,b_f\right]), && \\
&\geq (b_f-a_f)^s + (b_g-a_g)^s - (b_f-a_g)^s && (\textrm{by Lemma \ref{2GlemO}(a)} \\
& &&\quad \quad \textrm{and Corollary \ref{2GcorG}(a)}) \\
&> (b_g-a_f)^s. 
\end{align*}
The last inequality is obtained by putting $x=a_g-a_f>0$, $y=b_f-a_g\geq 0$, and $z=b_g-b_f>0$ in Lemma \ref{2GlemS}. This contradiction shows that (d) cannot occur and a similar argument can clearly be constructed to prove that (e) cannot happen either. Since $S_f \neq S_g$, exactly one of (a), (b), or (c) must occur. It only remains to prove the implications in the statement of the lemma.

That $S_f(I_u)\cap S_g(I_u)=\emptyset$ implies $S_f(F_u)\cap S_g(F_u)=\emptyset$ follows immediately as $S_f(F_u)\subset S_f(I_u)$ and $S_g(F_u)\subset S_g(I_u)$.

To see that $S_f(I_u)\subset S_g(I_u)$ implies $S_f(F_u)\subset S_g(F_u)$ we apply Lemma \ref{2GlemO}(b), to obtain $S_f(F_u)= F_u\cap S_f(I_u)\subset F_u\cap S_g(I_u)=S_g(F_u)$. 
 
Similarly, that $S_g(I_u)\subset S_f(I_u)$ implies $S_g(F_u)\subset S_f(F_u)$ also follows immediately by Lemma \ref{2GlemO}(b), since $S_g(F_u)= F_u\cap S_g(I_u)\subset F_u\cap S_f(I_u)=S_f(F_u)$. 
\end{proof}

We remind the reader that in the statements of Theorems \ref{2GthmU} and \ref{2GthmV} that follow, the $2$-vertex IFS (on the unit interval) of Figure \ref{P2Vertexb1} refers to Figure \ref{P2Vertexb1} but with $a_u$, $b_u$, $a_v$, and $b_v$ being assigned the specific values $a_u=a_v=0$ and $b_u=b_v=1$, so that $I_u=I_v=[0,1]$ and $\left|I_u\right|=\left|I_v\right|=1$, with the contracting similarity ratios and similarities as given in Equations (\ref{r_{e_i} unit interval}) and (\ref{S_{e_i} unit interval}).

\begin{thm}
\label{2GthmU}
For the $2$-vertex IFS (on the unit interval) of Figure \ref{P2Vertexb1}, suppose conditions (1), (2) and (3) of  Theorem \ref{2GthmN} all hold, so that $\mathcal{H}^s(F_u)=\left|I_u\right|^s=1$, and suppose also that the set $\left\{g_u,g_v,a,b,c,d\right\}\subset \mathbb{R^+}$ is multiplicatively rationally independent. 

Then the attractor at the vertex $u$, $F_u$, is not the attractor of any standard ($1$-vertex) IFS, defined on $\mathbb{R}$, with or without separation conditions.
\end{thm}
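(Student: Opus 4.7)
The plan is to reduce the general case to the CSSC case already handled by Corollary \ref{corCb}. I argue by contradiction: suppose $F_u$ is the attractor of a standard $1$-vertex IFS on $\mathbb{R}$ consisting of contracting similarities $T_1,\ldots,T_n$, so that $F_u=\bigcup_{i=1}^n T_i(F_u)$ and in particular $T_i(F_u)\subset F_u$ for each $i$. The hypotheses of Theorem \ref{2GthmN} give $\mathcal{H}^s(F_u)=|I_u|^s$, and the $2$-vertex IFS satisfies the OSC since $g_u,g_v>0$, so the structural dichotomy of Lemma \ref{2GlemT} is available for every pair of maps sending $F_u$ into itself.

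The key step is to prune the $1$-vertex system using Lemma \ref{2GlemT}. For each pair $i\neq j$, the hulls $T_i(I_u)$ and $T_j(I_u)$ are either disjoint or nested, and nesting of hulls forces $T_i(F_u)\subset T_j(F_u)$ or vice versa. Let $M\subset\{1,\ldots,n\}$ index the maps whose hulls are maximal under inclusion (discarding duplicates: if $T_i(I_u)=T_j(I_u)$ then by Lemma \ref{2GlemO}(b) also $T_i(F_u)=T_j(F_u)$, and we keep just one). By the dichotomy the hulls $\{T_i(I_u):i\in M\}$ are pairwise disjoint, and removing every non-maximal map does not change the union, since each discarded $T_j(F_u)$ sits inside some $T_i(F_u)$ with $i\in M$. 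Thus $F_u=\bigcup_{i\in M}T_i(F_u)$, and by uniqueness of the attractor of a contracting system, $F_u$ is the attractor of the pruned IFS $\{T_i:i\in M\}$.

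To apply Corollary \ref{corCb} I still need this pruned system to be a genuine $1$-vertex IFS with at least two maps and to satisfy the CSSC. The CSSC is immediate from the pairwise disjointness of the hulls. For the cardinality, the endpoints $a_u,b_u$ lie in $F_u$, yet each $T_i(I_u)$ is strictly shorter than $I_u=[a_u,b_u]$ because each $T_i$ is a strict contraction; hence no single hull can contain both endpoints, and $|M|\geq 2$. Therefore $\{T_i:i\in M\}$ is a valid $1$-vertex IFS on $\mathbb{R}$ satisfying the CSSC whose attractor is $F_u$, which directly contradicts Corollary \ref{corCb} under the multiplicative rational independence of $\{g_u,g_v,a,b,c,d\}$.

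The main obstacle is the soundness of the pruning: one must confirm both that at least two maps survive (handled by the endpoint argument) and that the reduced collection really has $F_u$ as its attractor (handled by uniqueness together with the containment statement in Lemma \ref{2GlemT}). Once these are in place the argument is essentially algebraic, with no new Hausdorff-measure estimates required beyond those already packaged in Lemmas \ref{2GlemO} and \ref{2GlemT}.
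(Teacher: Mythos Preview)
Your proof is correct and follows essentially the same route as the paper: assume $F_u$ is a standard IFS attractor, invoke Lemma~\ref{2GlemT} to prune overlapping maps until the remaining hulls are pairwise disjoint, and then contradict Corollary~\ref{corCb}. You are in fact slightly more careful than the paper in explicitly justifying $|M|\geq 2$ via the endpoint argument and in handling the case $T_i(I_u)=T_j(I_u)$ through Lemma~\ref{2GlemO}(b).
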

\begin{proof}
For a contradiction we suppose $F_u$ is the attractor of a $1$-vertex IFS, so $F_u$ will satisfy an invariance equation of the form
\begin{equation}
\label{onevertexequation}
F_u=\bigcup_{i=1}^n S_i(F_u),
\end{equation}
for some $n\geq2$. If $S_j(I_u)\cap S_k(I_u)\neq \emptyset$ for any $j\neq k$, $1 \leq j,k\leq n$, then by Lemma \ref{2GlemT}, either $S_j(I_u)\subset S_k(I_u)$, with $S_j(F_u)\subset S_k(F_u)$, or $S_k(I_u)\subset S_j(I_u)$, with $S_k(F_u)\subset S_j(F_u)$. Without loss of generality suppose $S_j(F_u)\subset S_k(F_u)$, then we may rewrite Equation (\ref{onevertexequation}) as 
\begin{equation*}
F_u=\bigcup_{ \substack{ i=1 \\ i\neq j}  } ^n S_i(F_u).
\end{equation*}
We may continue in this way, if necessary, relabelling and reducing the number of similarities $n$ in Equation (\ref{onevertexequation}) to $m$, $2 \leq m\leq n$, with
\begin{equation*}
F_u=\bigcup_{ i=1 } ^m S_i(F_u),
\end{equation*}
where $S_j(I_u)\cap S_k(I_u)= \emptyset$ for all $1 \leq j,k\leq m$, $j\neq k$. That is $F_u$ is the attractor of a $1$-vertex IFS that satisfies the CSSC. Because the set $\left\{g_u, g_v, a, b, c, d\right\}$ is multiplicatively rationally independent no such $1$-vertex IFS exists by Corollary \ref{corCb}. This is the required contradiction.
\end{proof}

\begin{thm}
\label{2GthmV}
For the $2$-vertex IFS (on the unit interval) of Figure \ref{P2Vertexb1}, but with $b=d$, suppose conditions (1), (2) and (3) of  Theorem \ref{2GthmN} all hold, so that $\mathcal{H}^s(F_u)=\left|I_u\right|^s=1$, and suppose also that the set $\left\{g_u,g_v,a,b,c\right\}\subset \mathbb{R^+}$ is multiplicatively rationally independent. 

Then the attractor at the vertex $u$, $F_u$, is not the attractor of any standard ($1$-vertex) IFS, defined on $\mathbb{R}$, with or without separation conditions. 
\end{thm}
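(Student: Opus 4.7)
The plan is to mirror closely the proof of Theorem \ref{2GthmU}, replacing the application of Corollary \ref{corCb} at the end with Corollary \ref{corC}. The role played by the $b=d$ specialisation is precisely that it shrinks the list of parameters required to be multiplicatively rationally independent from $\{g_u,g_v,a,b,c,d\}$ to $\{g_u,g_v,a,b,c\}$, which is exactly the hypothesis we are given.

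Concretely, I would argue by contradiction: suppose $F_u$ is the attractor of some standard ($1$-vertex) IFS, so $F_u=\bigcup_{i=1}^n S_i(F_u)$ for some similarities $S_1,\dots,S_n$ on $\mathbb{R}$ with $n\geq 2$. The assumption that conditions (1), (2), (3) of Theorem \ref{2GthmN} all hold gives $\mathcal{H}^s(F_u)=|I_u|^s=1$, which is precisely the hypothesis needed to apply Lemma \ref{2GlemT} to every pair $S_j,S_k$ of similarities in this alleged $1$-vertex decomposition (noting that each $S_i$ satisfies $S_i(F_u)\subset F_u$, so the hypothesis $S_f(F_u)\subset F_u$ of that lemma holds). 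By Lemma \ref{2GlemT}, whenever $S_j(I_u)\cap S_k(I_u)\neq\emptyset$ for $j\neq k$, one of the sets $S_j(F_u),S_k(F_u)$ is contained in the other, and the contained one may be discarded from the union without changing $F_u$.

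Iterating this reduction (there are only finitely many maps, so the process terminates) I would extract a subcollection $S_{i_1},\dots,S_{i_m}$, $2\leq m\leq n$, with
\begin{equation*}
F_u=\bigcup_{k=1}^m S_{i_k}(F_u) \quad \text{and} \quad S_{i_j}(I_u)\cap S_{i_k}(I_u)=\emptyset \text{ for all } j\neq k.
\end{equation*}
This exhibits $F_u$ as the attractor of a $1$-vertex IFS defined on $\mathbb{R}$ for which the CSSC holds. (One also needs $m\geq 2$, which follows from the standing assumption that each vertex has at least two edges leaving it, combined with the fact that $F_u$ has positive diameter and cannot be the image of itself under a single strict contraction.)

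Finally, because the present setup is that of the $2$-vertex IFS of Figure \ref{P2Vertexb1} with $b=d$ and with $\{g_u,g_v,a,b,c\}$ multiplicatively rationally independent, Corollary \ref{corC} tells us that no such $1$-vertex CSSC IFS can have $F_u$ as its attractor. This contradicts the reduction above, completing the proof. The only genuinely subtle ingredient is the use of $\mathcal{H}^s(F_u)=|I_u|^s$ to trigger Lemma \ref{2GlemT}; the rest is a direct transcription of the argument for Theorem \ref{2GthmU} with one fewer parameter in the rational independence hypothesis.
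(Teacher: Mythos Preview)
Your proposal is correct and follows exactly the paper's own approach: the paper's proof of Theorem \ref{2GthmV} simply states that it is identical to that of Theorem \ref{2GthmU} except that Corollary \ref{corC} is invoked in place of Corollary \ref{corCb}, which is precisely what you do. Your additional remarks (why $m\geq 2$, why Lemma \ref{2GlemT} applies) are harmless elaborations of the same argument.
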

\begin{proof} The proof is the same as that given for Theorem \ref{2GthmU}, except we apply Corollary \ref{corC} in place of Corollary \ref{corCb}.
\end{proof}

\begin{figure}[hbt]
\begin{center}
\includegraphics[trim = 5mm 212mm 5mm 16mm, clip, scale=0.7]{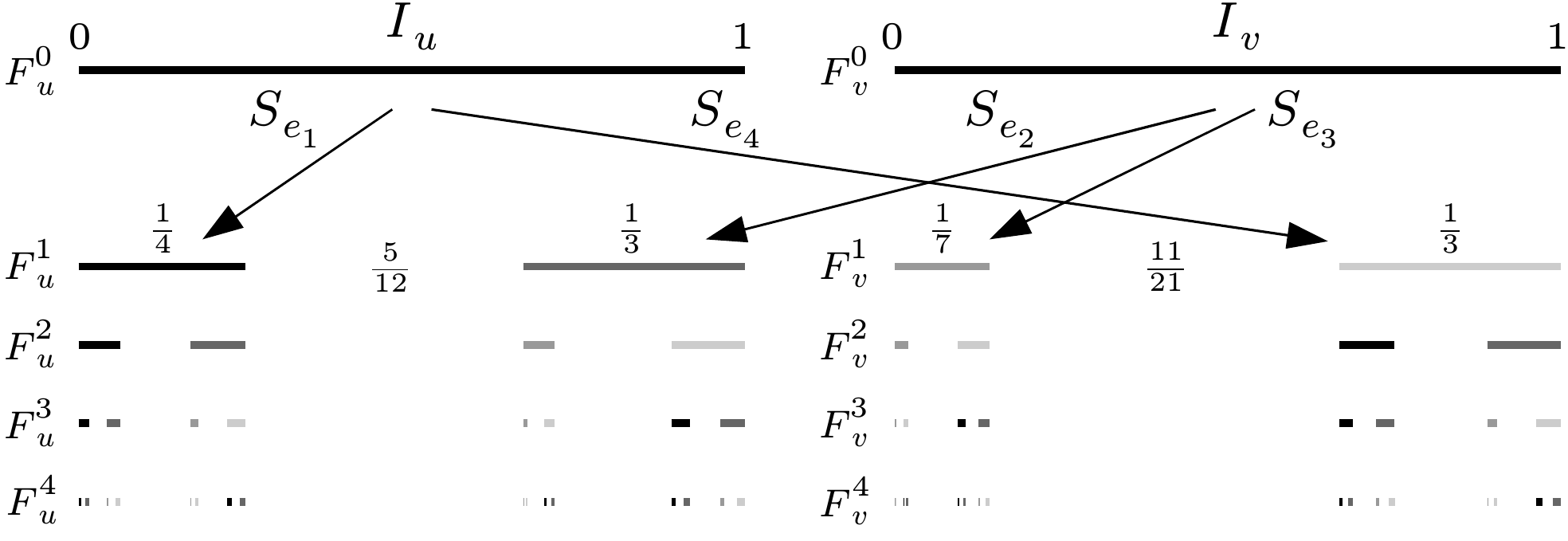}
\end{center}
\caption{Level-$k$ intervals for $0 \leq k \leq 4$. }
\label{2G2VertexExample}
\end{figure}

We now give a specific example to which we apply Theorem \ref{2GthmV}. Consider the following parameters for the $2$-vertex IFS (on the unit interval) of Figure \ref{P2Vertexb1}, with $a = \frac{1}{4}$, $g_u = \frac{5}{12}$, $b = d = \frac{1}{3}$, $c = \frac{1}{7}$, and $g_v = \frac{11}{21}$. The Hausdorff dimension can be calculated as $s=0.5147069928$, and $\frac{h_v}{h_u}=0.8978943038 < 1$. Also $\frac{(a+g_u)(\left|I_u\right|^s-a^s)}{ba^s}=2.082389923 > 1$, so conditions (1), (2) and (3) of Theorem \ref{2GthmN} all hold, which means $\mathcal{H}^s(F_u)=\left|I_u\right|^s=1$ and $\mathcal{H}^s(F_v)=0.8978943038$. 

The set $\left\{g_u, g_v, a, b, c\right\}=\left\{\frac{5}{12}, \frac{11}{21}, \frac{1}{4}, \frac{1}{3}, \frac{1}{7}\right\}$, is multiplicatively rationally independent. Theorem \ref{2GthmV} now ensures that the attractor $F_u$, at the vertex $u$, is not the attractor of any standard ($1$-vertex) IFS. Figure \ref{2G2VertexExample} illustrates the level-$k$ intervals, for $0 \leq k \leq 4$, for this particular example.

\bibliographystyle{amsplain}
\bibliography{P}

\end{document}